\documentclass[opre,nonblindrev]{informs3} 

\DoubleSpacedXI 




\usepackage{geometry}
\geometry{left=2.54cm}
\geometry{right=2.54cm}
\geometry{top=2.54cm}
\geometry{bottom=2.54cm}

\usepackage{algpseudocode}
\usepackage{algorithm}
\algnewcommand{\IIf}[1]{\State\algorithmicif\ #1\ \algorithmicthen}
\algnewcommand{\EndIIf}{\unskip\ \algorithmicend\ \algorithmicif}
\algtext*{EndIf}

\usepackage{endnotes}
\usepackage{geometry}
\usepackage{graphicx}
\usepackage{color}
\usepackage{float}
\usepackage{caption}%
\usepackage[caption=false,farskip=0pt,labelfont={bf}]{subfig} 
\usepackage{diagbox}

\let\footnote=\endnote

%


\usepackage{enumerate}
\usepackage{natbib}
\usepackage{epsfig,epsf,fancybox}

 \bibpunct[, ]{(}{)}{,}{a}{}{,}%

\bibliographystyle{apalike}
\setlength{\belowcaptionskip}{-1cm} %

  \newcommand{\Ex}{\mathbb{E}} 
\newcommand{\R}{\mathbb{R}}

\newcommand{\F}{\mathbb{F}}
\newcommand{\Prob}{\mathbb{P}}
\newcommand{\bz}{\mathbf{z}}
\newcommand{\bx}{\mathbf{x}}

\newcommand{\bp}{\mathbf{p}}

\TheoremsNumberedThrough     
\ECRepeatTheorems
\newenvironment{proof}{\paragraph{Proof:}}{\hfill$\square$}

\EquationsNumberedThrough    



\begin{document}




\TITLE{A Unified Framework for Generalized Moment Problems:
a Novel Primal-Dual Approach}

\ARTICLEAUTHORS{
\AUTHOR{Jiayi Guo, Simai He, Bo Jiang, Zhen Wang}
\AFF{School of Information Management and Engineering, Shanghai University of Finance and Economics, China, 200433,
guo.jiayi@sufe.edu.cn,
simaihe@mail.shufe.edu.cn,
isyebojiang@gmail.com,
zhenwang@163.sufe.edu.cn
}
}


\ABSTRACT{
Generalized moment problems optimize functional expectation over a class of distributions with generalized moment constraints, i.e., the function in the moment can be any measurable function. 
These problems have recently attracted growing interest due to their great flexibility in representing nonstandard moment constraints, such as geometry-mean constraints, entropy constraints, and exponential-type moment constraints. Despite the increasing research interest, analytical solutions are mostly missing for these problems, and researchers have to settle for nontight bounds or numerical approaches that are either suboptimal or only applicable to some special cases. 
In addition, the techniques used to develop closed-form solutions to the standard moment problems are tailored for specific problem structures.
In this paper, we propose a framework that provides a unified treatment for any moment problem. The key ingredient of the framework is a novel primal-dual optimality condition. 
This optimality condition enables us to 
reduce the original infinite dimensional problem to a nonlinear equation system with a finite number of variables.
In solving three specific moment problems, the framework demonstrates a clear path for identifying the analytical solution if one is available, otherwise, it produces semi-analytical solutions that lead to efficient numerical algorithms. Finally, through numerical experiments, we provide further evidence regarding the performance of the resulting algorithms by solving a moment problem and a distributionally robust newsvendor problem.
}%
\KEYWORDS{Distributionally robust optimization, Generalized moment problem, Newsvendor, Primal-dual }
\maketitle
%
\section{Introduction}
 The generalized moment problem {(see, e.g., \citealt{bertsimas2005optimal})}
aims to optimize the expectation of a measurable function $g(\cdot)$ with general distributional moment information $\Ex[h_i(X)]=m_i$ for $i=1,\dots,n$, where $h_i(\cdot)$ can be {\it any measurable} function. 
The study of this problem dates back to the pioneering work of \cite{cheb} and \cite{markov1884certain} on the {\it standard} moment problem (i.e., $h_i(X) = X^i$).  Despite the long history, the research on this fundamental problem is still active today
in modern probability theory \citep{smith1995generalized, bertsimas2005optimal,he2010bounding}. Moreover, the generalized moment problem is viewed as 
an important building block for distributionally robust optimization, with
wide applications in inventory control \citep{1958A, perakis2008regret,natarajan2018asymmetry, das2021heavy}, portfolio optimization \citep{ghaoui2003worst, bertsimas2010models, delage2010distributionally,zymler2013worst,rujeerapaiboon2016robust}, and statistical  learning \citep{lanckriet2002robust, mehrotra2014models,fathony2018distributionally}.

For the standard moment problem, the classic numerical algorithm comes from \cite{bertsimas2005optimal}, who formulate the dual of the problem as a polynomial-time solvable semidefinite program (SDP). For certain more structured problems, the relative entropy formulation {(see, e.g., \citealt{das2021heavy})} can be applied to further accelerate this solving process. However, methods that can yield \text{\it closed-form} solutions and provide management insights are often preferred. 
A classic example of such methods is due to \cite{1958A}, who obtains an analytical solution for the robust newsvendor problem, given the first two moments. Following this approach, a large volume of literature has discovered more closed-form decisions for some variants of Scarf's model by considering the asymmetry of demands \citep{natarajan2018asymmetry}, heavy-tailed distributed demands \citep{das2021heavy}, risk-averse objectives \citep{han2014risk}, etc.
The study of closed-form solutions is also prevalent in probability theory \citep{bertsimas2005optimal, he2010bounding, roos2021tight} and portfolio selection  \citep{ghaoui2003worst, zuluaga2009third, natarajan2010tractable, chen2011tight, li2018closed}.
However, the techniques that have been used in the literature to develop these closed forms are mathematically different and are only tailored for specific problems. In particular, much effort has been devoted to verifying the optimality of  analytical solution candidates that are provided without much explanation. 
However, discussion of how to come up with such analytical forms in the first place is largely missing. Thus, there is an urgent need for a unified framework that can systematically derive the closed-form solution for a given moment problem. 

The generalized moment problem  has recently attracted growing interest
due to its great flexibility in representing nonstandard moments,\footnotemark   \footnotetext{In this paper, we call a moment nonstandard if it cannot be represented as the expectation of some piece-wise polynomial function.}
such as geometry-mean constraints (i.e., $h_i(X)=\log {X}$)  in the pricing problem \citep{tamuz2013lower,elmachtoub2021value}, entropy constraints \citep{chen2019distributionally}, and exponential-type moment constraints (i.e., $h_i(X)=e^{X}$) including moment-generating constraints and sub-Gaussian distribution constraints \citep{honorio2014tight}.
Such exponential-type moment information is known to be crucial for establishing sharp tail estimations with large deviations  when a distribution has light tails. In contrast, geometry-mean constraints  are useful for estimating tail bounds with small deviations, especially when the  distribution has a heavy tail.
However, to the best of our knowledge, 
 the literature lacks an efficient algorithm that can exactly solve the generalized moment problem. The existing solution methods are either suboptimal \citep{smith1995generalized} or restricted to special cases \citep{popescu2007robust,chen2019distributionally}.
\subsection{Our Contributions}
To address the two issues raised above, we
propose a unified framework for solving the generalized moment problem, which is our main contribution. The merits of this framework can be summarized as follows:
\begin{enumerate}
    \item It provides a unified treatment for any generalized moment problem. Such treatment proves
useful in identifying an analytical solution of the problem if one is available. Even when a closed-form solution is unavailable, this framework can quickly reduce the problem to a system of equations with a small number of variables that  include only the optimal supporting points.
 For certain structured problems, this system of equations can lead to
numerical methods that are even more efficient than the SDP approach shown in our numerical experiments.
    
    \item The framework has great flexibility for accommodating nonstandard moment constraints/objective functions. It complements the current literature by providing an approach for finding the {\it exact} solutions of generalized moment problems. As an illustration, in Section \ref{sec:P-1et} we demonstrate how to solve an exponential moment constrained problem with this framework. 
    
    \item The framework provides 
    a novel viewpoint for generalized moment problems and lends alternative insights even for some well-studied models. For instance, we manage to obtain an exact semi-closed-form solution for the $1$st and $t$-th moment problem, which tightens the semi-closed-form bounds given in \cite{das2021heavy}, Proposition 3.4, under the same condition. It also enables us to find infinitely many optimal solutions in a degenerate case for the moment problem with the upper partial moment  objective and constraint in \cite{han2014risk}. 
 
\end{enumerate}
Apart from the framework, we also propose a novel optimality condition for the generalized moment problem by incorporating the primal feasible condition, the complementary slackness condition, and a tangent condition. To the best of our knowledge, the tangent condition has not been systematically treated as an optimality condition prior to our work, even though it has been implicitly used in the derivation of theoretical results (see, e.g., \citealt{smith1995generalized,popescu2007robust,zuluaga2009third}).
In fact, our 
optimality condition is equivalent to a sizeable
nonlinear system with an equal number of variables and equations, which paves the way for the analytical solution and efficient numerical algorithms for the problem.
\subsection{Related Literature}
Due to the popularity of closed-form solutions, in this subsection, we first review the vast literature
on the analytical solutions of the standard moment problems in the fields of inventory control, finance, and probability theory.
The moment problems in inventory control theory are mostly discussed in the literature on the distributionally robust newsvendor model. The previously mentioned 
seminal work of \cite{1958A} has been criticized for its over-conservative behavior, and several variants have been proposed and studied to address this issue. For instance, \cite{yue2006expected} derive analytical bounds on the min-max regret objective with mean-variance moment constraints. \cite{han2014risk} extend Scarf’s closed-form formula to the risk- and ambiguity-averse newsvendor problem. \cite{natarajan2018asymmetry} provide a closed-form expression for the worst-case newsvendor profit with mean, variance, and semivariance information, and they show that the worst case occurs in a three-point distribution. Recently, \cite{das2021heavy} have considered the problem with the first and $t$-th moment constraints that capture the heavy-tailed behavior of the demand, and they manage to derive a closed-form worst-case distribution when the order quantities are below a certain value.

Regarding the research on analytically solving the moment problems in finance, \cite{ghaoui2003worst} consider the distributionally robust single-period portfolio selection and obtain closed-form solutions of the worst-case value at risk over a mean-variance constraint. 
\cite{chen2011tight} extend this result to the disutility function in the form of a conditional value at risk (CVaR), or the form of lower partial moments. Under the same moment constraint, \cite{natarajan2010tractable} and \cite{li2018closed} identify  closed-form solutions for the optimized certainty equivalent risk measure in \cite{2010bental} and the law-invariant risk measure that is the most important extension of CVaR, respectively. 
 \cite{zuluaga2009third} analytically find the worst-case payoff of the European call option using up to the third-order moment, which characterizes the skewness of the asset return.

 In probability theory, moment problems have been applied to derive tight closed-form tail probability bounds, given the first three moments \citep{bertsimas2005optimal}. Subsequently, \cite{he2010bounding} extend this result to the problem with first-, second-, and fourth-order moments. Recently, \cite{roos2021tight} provide alternative tight lower and upper bounds on the tail probability under a bounded support, given the mean and mean absolute deviation of the random variable.
 
In contrast to the fruitful research on the standard moment problems, studies of nonstandard moment problems are quite limited, and they all focus on the numerical approaches.
\cite{smith1995generalized} applies an approximate procedure to reduce the dimension of the sample space of the problem by grid search. Unfortunately, the efficiency of this approach has not been supported by theoretical foundations or by extensive numerical experiments. For the moment problem with entropy constraints that can be characterized by tractable conic inequalities, \cite{chen2019distributionally}
propose a greedy improvement procedure that sequentially optimizes tractable relaxed subproblems. If we relax the objective to be the so-called one- or two-point support functions
while restricting the constraint to be the first two moments, \cite{popescu2007robust} successfully reduces this problem to a deterministic parametric quadratic program. 

In this paper, we focus on providing a unified treatment for various moment problems, including nonstandard ones, and on identifying the analytical solution when it is available.
We note that, even for the standard moment problems, to the best of our knowledge, a similar framework has not yet been proposed, as the methods described above for obtaining the analytical solutions are problem-dependent. 
There are also other framework-like methods that can solve a wide class
of moment problems have been proposed \citep{smith1995generalized, bertsimas2005optimal}. However, the classical SDP method \citep{bertsimas2005optimal} is a numerical approach, and it only works for standard moment problems, while \cite{smith1995generalized}'s method can be applied to generalized moment problems, but, as noted above, its efficiency has not been  theoretically or numerically justified. For these reasons, our work is an excellent complement to the existing literature.
\subsection{Outline and Notations}

The remainder of the paper is organized as follows. In Section \ref{sec:framework}, we present a novel primal-dual optimality condition for the generalized moment problems and propose a three-step unified framework for these problems.
In the subsequent sections, we analyze three concrete moment problems with our  framework. In Section \ref{Sec:1t}, we consider the $1$st and $t$-th moment problem that was proposed in \cite{das2021heavy}, and we obtain the same analytical solution under the same condition. In cases where an analytic solution is unavailable, our framework also provides a semi-analytical form of the optimal solution.
The merit of this form is that it relies on only one parameter that 
is a root of a nonlinear equation. Section \ref{sec:P-121} is devoted to the problem of minimizing $2$nd-order upper partial moments with the $1$st-order upper partial moment constraint. Not only do we obtain the same explicit optimal value as \cite{han2014risk}, but we also identify more analytical solutions for a degenerate case (see Lemma \ref{lm: p-z-121-2}). To demonstrate our framework's ability to handle nonpolynomial moments, we consider an exponential moment constrained problem in Section \ref{sec:P-1et}. Similar to Section \ref{Sec:1t}, we obtain an analytical solution and a semi-analytical solution for two scenarios that are defined by the range of the moment parameters. By solving those three moment problems, we show that our framework is capable of finding the closed-form solution if one exists. In addition, when a closed-form solution is not available, our framework leads to some efficient numerical algorithms. In Section \ref{sec:numerics}, we apply those algorithms to the $1$st and $t$-th moment problem and the distributionally robust newsvendor problem with an exponential moment ambiguity set as two illustrative examples. We show that the resulting algorithms indeed solve the two problems efficiently and thus demonstrate the benefit of our framework.

Throughout this paper, we denote vectors and matrices by boldface lowercase letters and capital letters, respectively. For probability distributions, we use $\mathbb{M}(\Omega)$ to denote the set of all Borel probability distributions on the support $\Omega \subseteq \R$, and we use $\{\bx; \bp\}_{D}$ to denote a discrete distribution with support $\bx = (x_1,x_2,..,x_K)^{T}\in \Omega^k$ and probability vector $\bp = (p_1,p_2,..,p_K)^{T}$ such that $p_i=\Prob(X=x_i)$ for $i=1,2,\ldots,K$.
Finally, we use $[x]_+$ to represent the positive function $\max(x,0)$.
\section{The Unified Framework for the Generalized Moment Problem}\label{sec:framework}
\subsection{The Generalized Moment Problem and the Optimality Condition}
We consider the generalized moment problem in the following form:
\begin{equation}\label{Prob:GMP}
\tag{\sf GMP}
	\begin{aligned}
	&&\quad \quad  Z_P=\max_{F(\cdot)}  & \int_{\Omega}g(x)\cdot \mathrm{d}F(x)\\
	&& \text{s.t. } & \int_{\Omega} h_i(x) \cdot \mathrm{d}F(x) = m_i,\;\; i=0,1,\ldots,n,\\
	&& & \mathrm{d}F(x) \ge 0,\;\; \forall x\in \Omega,
	\end{aligned}
\end{equation}	
with $h_0(x)=m_0=1$ and $\Omega \subseteq \R$. In contrast to the standard moment problem, where the moment constraints are defined by the expectations of certain piece-wise polynomial functions, the function $h_i(\cdot)$ in \eqref{Prob:GMP} can be {\it any measurable} function with respect to $\Omega$ for $i=1,2,\ldots,n$. Note that problem \eqref{Prob:GMP} can be viewed as a semi-infinite linear program with $n+1$ constraints. According to \cite{smith1995generalized} and \cite{bertsimas2005optimal}, there exists an optimal distribution of \eqref{Prob:GMP} with at most $n+1$ mass points.
Therefore, we consider an optimal distribution $\{\bx; \bp\}_{D}$
with finite support $\bx = (x_1,x_2,..,x_K)^{T}$ and probability vector $\bp = (p_1,p_2,..,p_K)^{T}$. Then
the moment constraints in \eqref{Prob:GMP} can be rewritten as
\begin{equation} \label{eq:primal}
\tag{\sf PriCond}
 \begin{pmatrix}
h_0(x_1)& \dots & h_0(x_K)\\
\vdots  & \ddots & \vdots\\
h_n(x_1)& \dots & h_n(x_K)
\end{pmatrix}\begin{pmatrix}
p_1\\
\vdots  \\
p_K\\
\end{pmatrix}=\begin{pmatrix}
m_0\\
\vdots  \\
m_n\\
\end{pmatrix} .
\end{equation}	
To study \eqref{Prob:GMP} from an alternative perspective, we consider the dual problem:
\begin{equation}
\label{Prob:DGMP}
\tag{\sf DGMP}
	\begin{aligned}
	&&\quad \quad  Z_D  =\min_{\bz \in \R^{n+1}} &  \sum\limits_{i=0}^{n}   z_i \cdot m_i\\
	&& \text{s.t. }   H(x;\mathbf{z}) &:=\sum_{i=0}^{n}z_i \cdot h_i(x)-g(x) \geq 0, \forall x \in \Omega.\\
	\end{aligned}
\end{equation}
Let $\mathbf{m}=(m_0,m_1,...,m_n)^T$  be a moment vector. The strong duality ($Z_P=Z_D$) holds if $\mathbf{m}$ lies in the interior of the set of all moment vectors that make problem \eqref{Prob:GMP} feasible \citep{bertsimas2005optimal}. It is well known that a primal feasible distribution $\{\bx; \bp \}_{D}$ and a dual feasible solution $\bz$ are optimal if and only if the complementary slackness condition holds \citep{smith1995generalized}, i.e., $p_j \cdot H(x_j ; \bz) =0$ for $\; \forall \;j=1,\cdots,K$. Since $p_j > 0$, we further have $H(x_j ; \bz) =0$ for  $\forall \;j=1,\cdots,K$, {which is equivalent to}:
\begin{equation} \label{eq:cs}
\tag{\sf SlackCond}
\begin{pmatrix}
h_0(x_1)& \dots & h_n(x_1)\\
\vdots & \ddots & \vdots\\
h_0(x_K)& \dots & h_n(x_K)
\end{pmatrix}\begin{pmatrix}
z_0\\
\vdots  \\
z_{n}\\
\end{pmatrix}=\begin{pmatrix}
g(x_1)\\
\vdots  \\
g(x_{K})
\end{pmatrix}.
\end{equation}
The equations above together with the dual feasibility imply that
$x_1, \ldots, x_K$ are global minimizers of the problem $\min_{x \in \Omega} H(x;\mathbf{z})$. Furthermore, if $x_j$ is the {\it differentiable interior point}, i.e., the differentiable point of $H(x;\mathbf{z})$ lying in the interior of $\Omega$, for some $1\le j \le K$, then according to the first-order optimality condition, we have 
$$H^{\prime}(x_j;\bz)=\sum_{i=0}^n z_i h^{\prime}_i(x_j)-g^{\prime}(x_j)=0.$$
Without loss of generality, we assume that $x_1, \ldots, x_k$ with $k \le K$ are the differentiable interior points. The {above equality} leads to the so-called \text{\it tangent condition}:
\begin{equation} \label{eq:Deriv}
 \tag{\sf TagntCond}
\begin{aligned}
&    & 
\begin{pmatrix}
{h^{\prime}_0(x_1)}& \dots & {h^{\prime}_n(x_1)}\\
\vdots   & \ddots & \vdots\\
{h^{\prime}_0(x_{k})}& \dots &{h^{\prime}_n(x_{k})}\\
\end{pmatrix}
\begin{pmatrix}
z_0\\
\vdots  \\
z_{n}\\
\end{pmatrix}=\begin{pmatrix}
{g^{\prime}(x_1)}\\
\vdots\\
{g^{\prime}(x_{k})}
\end{pmatrix}
\end{aligned}.
\end{equation}
Consequently, we formally {propose the three systems of}
equations \eqref{eq:primal}, \eqref{eq:cs}, and \eqref{eq:Deriv} as an optimality condition for problem \eqref{Prob:GMP} in the following theorem.
\begin{theorem} \label{tm: gpm}
 Suppose that strong duality holds between \eqref{Prob:GMP} and \eqref{Prob:DGMP}. Then the distribution $\{\bx \in \Omega^K; \bp \}_{D}$ and vector $\bz \in \R^{n+1}$ are optimal solutions to \eqref{Prob:GMP} and \eqref{Prob:DGMP}, respectively, if and only if the following optimality condition holds:
 \begin{enumerate}[i)]
 \item $\bp \geq 0$, and $H(x;\bz) \geq 0$ for all $x \in \Omega$;
 \item {\it the primal condition} \eqref{eq:primal} holds;
 \item {\it the complementary slackness condition} \eqref{eq:cs} holds;
 \item {\it the tangent condition} \eqref{eq:Deriv} is valid for all differentiable interior points in $\bx$.
 \end{enumerate}
\end{theorem}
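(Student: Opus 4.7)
The plan is to prove the equivalence by tackling the two directions separately, with most of the substantive work being the necessity direction (which essentially formalizes the derivation appearing in the paragraphs immediately preceding the theorem) and the sufficiency being a short weak-duality argument.

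For the necessity direction, I would assume that $\{\bx;\bp\}_D$ and $\bz$ are optimal for \eqref{Prob:GMP} and \eqref{Prob:DGMP} respectively, and then extract each of (i)--(iv) in turn. Condition (i) is immediate: $\bp \ge 0$ is part of primal feasibility (since probabilities are nonnegative), and $H(x;\bz) \ge 0$ for all $x \in \Omega$ is precisely the dual feasibility constraint of \eqref{Prob:DGMP}. Condition (ii) is the moment system $\sum_j p_j h_i(x_j) = m_i$ rewritten in matrix form, again from primal feasibility. For (iii), strong duality $Z_P = Z_D$ combined with the identity $Z_D - Z_P = \sum_j p_j H(x_j;\bz)$ (obtained by substituting the primal moment identities into the dual objective) forces $p_j H(x_j;\bz) = 0$ for all $j$; since every supporting point carries strictly positive mass, $H(x_j;\bz)=0$ for all $j$, which is \eqref{eq:cs}. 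For (iv), I would argue that each $x_j$ globally minimizes $H(\,\cdot\,;\bz)$ on $\Omega$: (i) gives $H\ge 0$ on $\Omega$ and (iii) gives $H(x_j;\bz)=0$. At any $x_j$ that is an interior point of $\Omega$ and at which $H(\,\cdot\,;\bz)$ is differentiable, the standard first-order necessary condition then yields $H'(x_j;\bz)=0$, and stacking these equations for the $k$ differentiable interior supporting points gives \eqref{eq:Deriv}.

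For the sufficiency direction, I would assume (i)--(iv) and show optimality. Condition (ii) together with $\bp \ge 0$ from (i) establishes primal feasibility of $\{\bx;\bp\}_D$, and the dual-side part of (i) gives dual feasibility of $\bz$. It then suffices to match the two objective values. Using \eqref{eq:primal} and then \eqref{eq:cs} in succession,
\begin{equation*}
\sum_{i=0}^{n} z_i m_i \;=\; \sum_{i=0}^{n} z_i \sum_{j=1}^{K} p_j h_i(x_j) \;=\; \sum_{j=1}^{K} p_j \sum_{i=0}^{n} z_i h_i(x_j) \;=\; \sum_{j=1}^{K} p_j\, g(x_j),
\end{equation*}
so the dual and primal objective values coincide. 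Weak duality (which holds even without strong duality) then forces both to be optimal. I note that the tangent condition (iv) is not actually needed for this implication; it is already implied by (i) and (iii), so the theorem's four conditions are a (mildly redundant) reformulation that is nonetheless convenient for computation.

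The main obstacle, and the step that requires the most care, is (iv) in the necessity direction: the conclusion $H'(x_j;\bz)=0$ is justified only at those supporting points that are both interior to $\Omega$ and differentiability points of $H(\,\cdot\,;\bz)$. This is exactly why the theorem restricts (iv) to the sub-collection $x_1,\dots,x_k$ of differentiable interior points; for boundary points or kinks one cannot invoke the first-order condition, and the constraint $H\ge 0$ with $H(x_j;\bz)=0$ yields only a one-sided (subdifferential) inequality. I would handle this by making the selection of $k \le K$ explicit up front, treating the remaining $K-k$ supporting points via (iii) alone, and verifying that strong duality (assumed in the theorem) validates the equality $\sum_j p_j H(x_j;\bz)=0$ that underlies the entire argument.
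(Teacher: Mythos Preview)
Your proposal is correct and follows essentially the same approach as the paper: the necessity direction spells out exactly the derivation the paper sketches in the paragraphs preceding the theorem, and the sufficiency direction is the same feasibility-plus-complementary-slackness argument (the paper phrases it as ``complementary slackness is guaranteed by \eqref{eq:cs}'' rather than explicitly matching objective values, but this is the same content). Your observation that (iv) is redundant for the sufficiency direction is correct and worth noting.
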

\begin{proof}
The necessity of the optimality condition follows directly from the discussion immediately preceding this theorem. To prove the sufficiency of the condition, suppose that there exist a distribution $\{\bx \in \Omega^K ; \bp \}_{D}$ with $\bp \geq 0$ and a vector $\bz \in \R^{n+1}$ satisfying $H(x;\bz) \geq 0$ for all $x \in \Omega$ such that \eqref{eq:primal}, \eqref{eq:cs}, and \eqref{eq:Deriv} hold. Then $\{\bx; \bp\}_{D}$ is a feasible solution to the primal problem \eqref{Prob:GMP}, due to $\bp \geq 0$ and condition \eqref{eq:primal}, and $\bz$ is also a feasible solution to the dual problem \eqref{Prob:DGMP}, as $H(x;\bz) \geq 0$ for all $x \in \Omega$. Moreover, the complementary slackness is guaranteed by \eqref{eq:cs}. Therefore, $\{\bx \in \Omega^K; \bp \}_{D}$ and $\bz$ are optimal solutions to \eqref{Prob:GMP} and \eqref{Prob:DGMP}, respectively.
\end{proof}

We remark that \eqref{eq:primal} and \eqref{eq:cs} are standard conditions that are used to characterize the optimal solution. However, to the best of our knowledge, \eqref{eq:Deriv} has not been formally proposed as an optimality condition, although it has been implicitly used in the literature for the analysis of some structured moment problems (see, e.g., \citealt{smith1995generalized,popescu2007robust,zuluaga2009third}). In fact, condition \eqref{eq:Deriv} has a very clear geometric explanation, which is that $\sum_{i=0}^{n}z_i \cdot h_i(x)$ and $g(x)$ share the same tangent plane at the points $x_1, \cdots, x_k$, while it is possible that
such a tangent property does not hold for other intersecting points $x_j$ with $k<j\le K$, as illustrated in Figure \ref{Fig:4thOrder}.
Specifically, the instance in Figure \ref{Fig:4thOrder} is taken from the dual problem of upper bounding the probability under the first, second, and fourth moments \citep{he2010bounding}. In this case, $\sum_{i=0}^{n}z_i \cdot h_i(x)= z_0+z_1 x+z_2x^2+z_4x^4$ is a quartic function, and $g(x) = {1} _{x \geq q}(x)$ for some given constant $q$ is an indicator function. From the figure, we can see that ${1} _{x \geq q}(x)$ is tangent to $z_0+z_1 x+z_2x^2+z_4x^4$ at $u$ and $v$, where condition \eqref{eq:Deriv} holds, while the two functions intersect but are not tangent at point $q$, as ${1} _{x \geq q}(x)$ is not differentiable at this point.
\begin{figure} 
  \centering
  \includegraphics[height=0.4\textheight]{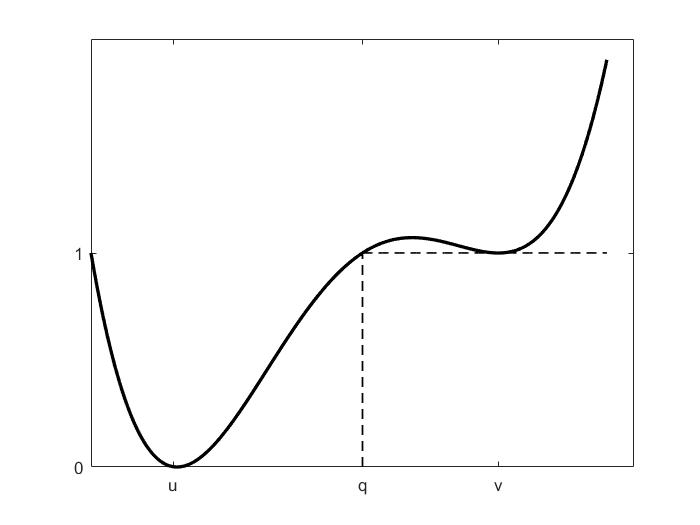}
  \caption{A quartic function shown above is an indicator function with three touching points $\{u,q,v\}$}
  \label{Fig:4thOrder}
\end{figure} 
\subsection{The Unified Framework}
In accordance with the optimality conditions \eqref{eq:primal}, \eqref{eq:cs}, and \eqref{eq:Deriv} stated in Theorem \ref{tm: gpm}, we propose the following three-step unified framework for solving the GMP.
 \vspace{0.2cm}
 	
 \begin{center}
 			\shadowbox{\begin{minipage}{5.5in}
			\textbf{The Unified Framework for Solving the GMP:}
			\begin{enumerate}
				\item {\it Identify the {\it rough} structure of the support of the optimal distribution}.
				\item {\it Provide the possible (semi-)analytical form of the optimal supporting points}.
				\item {\it Solve and verify the optimal distribution}.
			\end{enumerate}
			\end{minipage}}
\end{center}
The {\it rough} structure in Step 1 of the framework means the cardinality of the optimal support and the description of how the associated supporting points are distributed among {\it differentiable pieces.}\footnotemark  
\footnotetext{For the instance in Figure 1, there are two differential pieces, $(-\infty, q)$ and $(q,\infty)$. Therefore, the two tangent points $\{u, v\}$ can possibly be allocated in those two pieces in three ways: 
$\{u,q,v\},\{u,v,q\}$, and $\{q,u,v\}$. }
$\hspace{2mm}$ Each piece is defined as the interval connecting two consecutive non-differentiable points of the function $H(x;\mathbf{z})$, with $\pm \infty$ being treated as non-differentiable points. The rough structure can be identified by \eqref{eq:cs}, \eqref{eq:Deriv}, and the dual feasibility condition. It is worth mentioning that  the optimal support derived from our approach is often sparse compared to the number of moment constraints $n+1$, which is the dimension of the dual variable $\mathbf{z}$. Intuitively, this is because \eqref{eq:cs} and \eqref{eq:Deriv} combined together can be viewed as a linear system on $\bz$, and 
every supporting point $x_j$ with $1\le j \le K$ contributes one equation in \eqref{eq:cs} and 
one more equation if it appears in \eqref{eq:Deriv}.
$\mathbf{z}$ can therefore be determined by far fewer supporting points than its dimension, i.e., $K << n+1$. Our observation is also consistent with the previous results (see, e.g., \citealt{popescu2007robust,zuluaga2009third,he2010bounding,natarajan2018asymmetry,das2021heavy}), where the cardinality of the optimal support is often less than $n+1$.

Note that the information about the optimal solution that is provided by the rough structure in Step 1 is still limited. To proceed, in Step 2
we  define several scenarios  where the rough structure exhibits a more explicit expression, and then derive the 
possible (semi-)analytical form of the optimal supporting points for each scenario. This is achieved by treating  \eqref{eq:primal}, \eqref{eq:cs}, and \eqref{eq:Deriv} as a large nonlinear system of equations in $\bp$, $\bx$, and $\bz$ with a total of $K+k+n+1$ variables and an equal number of equations, and working intensively on the nonlinear system. Since this system is linear in $\bp$ and $\bz$ for any given $\bx$, we can represent $\bp$ and $\bz$ with $\bx$ by solving the linear system and further plugging the expression into the large nonlinear system to eliminate $\bz$ and $\bp$. As a result, we obtain a much smaller nonlinear system regarding {\it only} $\bx$ with $k$ variables and an equal number of equations.
For instance, in the proofs of Lemma \ref{lm:lg q} and Lemma \ref{lm:exp uv-1} we obtain two nonlinear equations involving only $\bx$ that are derived from the original nonlinear system with respect to $\bp$, $\bx$, and $\bz$. For more general cases, we can resort to 
classical methods such as the Gauss-Newton, trust region, and Levenberg-Marquardt methods (see, e.g., \citealt{more1978, Dennis1983, Kelley1995}) to solve such nonlinear systems.

The purpose of Step 3 is to compute the distribution according to the (semi-)analytical forms of the supporting points given in Step 2 and further verify the optimality of the constructed distribution. Since the (semi-)analytical forms are often derived by \eqref{eq:primal}, \eqref{eq:cs}, and \eqref{eq:Deriv}, Theorem \ref{tm: gpm} shows that it remains to verify the primal feasibility $\bp \ge 0$ and the dual feasibility $H(x;\bz) \geq 0$ for all $x \in \Omega$. 

We remark that the main effort in Step 3 is devoted to verifying the optimality of a given distribution, which plays a similar role to most analyses conducted in the literature to obtain the closed-form solution of a moment problem. However, an explanation of how to systematically provide such a distribution is largely missing in the extant research, while Step 1 together with Step 2 of our framework serve the purpose of finding one such distribution, thus complementing the extant literature. Moreover, any analytical solution of \eqref{Prob:GMP} must satisfy the large nonlinear system of equations jointly defined by \eqref{eq:primal}, \eqref{eq:cs}, and \eqref{eq:Deriv} in Step 2, as this is a necessary optimality condition according to Theorem \ref{tm: gpm}. Therefore, our approach can find any analytical optimal distribution as long as such a nonlinear system can be solved analytically, which has a good chance of occurring because the system has an equal number of variables and equations.

In what follows, we analyze three concrete moment problems with our unified framework: the $1$st and $t$-th moment problem, the moment problem with an upper partial moment objective and constraint, and the exponential moment problem. To 
analyze each problem,
we supplement our framework with detailed technical lemmas.
Specifically, the task of each step is accomplished by one or two of these lemmas, as summarized in Table \ref{Tab:correspondence} for the reader's convenience.  
\begin{table}[h]
\centering
\newcommand{\tabincell}[2]{\begin{tabular}{@{}#1@{}}#2\end{tabular}}
\begin{tabular}{|c|m{3.2cm}<{\centering}|m{3.2cm}<{\centering}|m{3.3cm}<{\centering}|}
\hline
 \diagbox{Framework}{Problems} &  $1$st and $t$-th moment problem  & upper partial moment problem & $1$st and exponential moment problem  \\
			\hline
			Step $1$ & Lemma \ref{lm:loc-1t} & Lemma \ref{lm: loc-121} & Lemma \ref{lm:loc-1et} \\  \hline
			Step $2$   & Lemma  \ref{lm:sm q}, \ref{lm:lg q} & Lemma \ref{lm: u-v-121}, \ref{lm: u-v-121-2} & Lemma \ref{lm:exp-0v}, \ref{lm:exp uv-1} \\  \hline
			Step $3$   & Lemma \ref{lm:sm q-b}, \ref{lm:lg q-b}  &Lemma \ref{lm: p-z-121}, \ref{lm: p-z-121-2}  & Lemma \ref{lm:exp 0v-2}, \ref{lm:exp uv-2} \\  \hline
		\end{tabular}
		\caption{Correspondence between technical lemmas and the steps in the framework}
		\label{Tab:correspondence}
\end{table}
\section{The $1$st and $t$-th moment problem in the newsvendor model}\label{Sec:1t}
\subsection{Problem formulation and main results}
In this section, we consider the moment problem that originated from a distributional robust newsvendor model proposed in \cite{das2021heavy} and demonstrate how to apply our framework to solve this problem. In particular, the problem is given as follows:
\begin{equation} \label{eq:P-1t}
\tag{\sf MP$_{1t}$}
\max_{F \in \F_{1t}}  \Ex_{F}[(X - q)_+],
\end{equation}
where $q$ is the order quantity in the newsvendor model and $F$ is the cumulative distribution function of random variable $X$. We also require that the distribution $F$ satisfies the following moment constraint \ given in \cite{das2021heavy}:
$$\F_{1t}=\left\{F \in \mathbb{M}(\R_+) : \int_{0}^\infty \mathrm{d}F(x)=1, \int_{0}^\infty x \mathrm{d}F(x)=M_1,\int_{0}^\infty x^t \mathrm{d}F(x)=M_t\right\},$$
where $M_1$ and $M_t$ are the $1$st and $t$-th moment parameters satisfying $M_t \geq M_1^t > 0$, with $t>1$. In fact, \cite{das2021heavy} showed that $\F_{1t}$ is capable of capturing more light-tailed ($t>2$) or heavy-tailed behavior ($t<2$) of the underlying distribution than the classical constraint in \cite{1958A}'s model. To analyze the moment problem
\eqref{eq:P-1t}, we also consider the following dual problem:
\begin{equation} \label{eq:D-1t}
\tag{\sf DMP$_{1t}$}
\begin{aligned} 
\min_z \,\,\, & &  z_0 +z_1M_1+z_t M_t & \\
\text{subject to} & & z_0+z_1 x+z_t x^t \geq (x - q)_+, & \,\,\,\text{for all }  x \in \mathbb{R}_+.
\end{aligned}
\end{equation}
Note that for any random variable $X$ with mean $M_1$, we can always perform a variable transformation by letting $\hat X=\frac{X}{M_1}$ such that 
$$\hat M_1=\Ex [\hat X]=1,\quad \hat M_t=\Ex [(\hat X)^t]=\frac{M_t}{M_1^t}, \quad \mbox{and}\quad  \hat q=\frac{q}{M_1},$$
and work on $\hat X$ instead.
Therefore, without loss of generality, we assume that $M_1=1$ in the remaining part of this section.
In this case, any single-point feasible distribution includes only the point $1$ in the support, and the $t$-th moment of this distribution is also equal to $1$. Thus, we further assume that $M_t>1$ to exclude the trivial cases of single-point feasible distributions. With our unified framework, we obtain the following main theorem of this section, which provides a characterization of the optimal value of problem \eqref{eq:P-1t}.
\begin{theorem} \label{tm:1t}
Suppose that $M_t>M_1=1$, with $t>1$.  Then the optimal value
of problem \eqref{eq:P-1t}
is given by
$$\max_{F \in \F_{1t}} \Ex_{F}[(X - q)_+]= \begin{cases}
      1-q M_t^{-\frac{1}{t-1}}, & \text{if } \;0<q\leq \frac{t-1}{t} M_t^{\frac{1}{t-1}}\\
      \frac{(v-q)(1-u)}{v-u}, & \text{if } \; q>\frac{t-1}{t} M_t^{\frac{1}{t-1}}\\
    \end{cases},     $$
where $u=\frac{tq}{t-1}\frac{v^{t-1}-M_t}{v^t-M_t}$ and $v \in \left(\max\left\{M_t^{\frac{1}{t-1}},q\right\},  \frac{t}{t-1}q\right)$ is any root of the following equation:
\begin{equation}\label{Eqn-Theta}
\Theta(y):=\frac{y^t-M_t}{y-1}\left(1-\frac{tq}{t-1}\frac{y^{t-1}-M_t}{y^t-M_t}\right)+\left(\frac{tq}{t-1}\frac{y^{t-1}-M_t}{y^t-M_t}\right)^t-M_t=0.
\end{equation}
\end{theorem}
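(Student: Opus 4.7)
My plan is to apply the three-step framework of Section~\ref{sec:framework} to problem \eqref{eq:P-1t} with $g(x)=(x-q)_+$, $h_0(x)=1$, $h_1(x)=x$, $h_t(x)=x^t$, and $\Omega=\R_+$. Throughout, I would exploit the fact that the dual polynomial $P(x):=z_0+z_1 x+z_t x^t$ is strictly convex on $\R_+$ whenever $z_t>0$ (since $t>1$). For Step~1, because $g$ has a single kink at $q$, the function $H(x;\bz)=P(x)-(x-q)_+$ has two differentiable pieces on $\R_+$, namely $(0,q)$ and $(q,\infty)$. I would use Lemma~\ref{lm:loc-1t} to argue that the optimal support falls into one of two configurations: case~(a) $\{0,v\}$ with $v>q$ a tangent point in $(q,\infty)$; or case~(b) $\{u,v\}$ with $u\in(0,q)$ and $v\in(q,\infty)$ both tangent points. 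All other configurations are ruled out because strict convexity of $P$ allows at most one tangent zero per piece, and a support point at the kink $q$ would force incompatible one-sided slope conditions.

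For Step~2, in case (a) the three primal moment equations immediately solve to $v=M_t^{1/(t-1)}$, $p_v=M_t^{-1/(t-1)}$, $p_0=1-p_v$, yielding the objective $p_v(v-q)=1-q M_t^{-1/(t-1)}$. In case (b), the slack equations \eqref{eq:cs} at $u,v$ together with the tangent equations \eqref{eq:Deriv} at $u,v$ form a linear system in $(z_0,z_1,z_t)$ whose solution is $z_t=1/[t(v^{t-1}-u^{t-1})]$ and $z_1=-u^{t-1}/(v^{t-1}-u^{t-1})$, while the consistency between the two slack equations yields the relation
\begin{equation*}
(t-1)(v^t-u^t)=tq(v^{t-1}-u^{t-1}).
\end{equation*}
Combining this with the $t$-th moment equation $(v-1)u^t+(1-u)v^t=M_t(v-u)$ (which follows from the primal condition after eliminating $p_u=(v-1)/(v-u)$ and $p_v=(1-u)/(v-u)$), I would perform algebraic elimination to obtain the semi-explicit form $u=\tfrac{tq}{t-1}\cdot\tfrac{v^{t-1}-M_t}{v^t-M_t}$ and, substituting this back, the single univariate equation $\Theta(v)=0$. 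The objective then simplifies directly to $p_v(v-q)=\tfrac{(v-q)(1-u)}{v-u}$.

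For Step~3, the dual feasibility $H\ge 0$ on $\R_+$ in case (b) follows automatically from strict convexity of $P$ and the two tangencies. In case (a), by contrast, dual feasibility reduces to the sign condition $z_1=1-\tfrac{tq}{(t-1)M_t^{1/(t-1)}}\ge 0$, which is exactly $q\le\tfrac{t-1}{t}M_t^{1/(t-1)}$ --- precisely the threshold in the theorem. For $q$ above this threshold, I would establish existence of a root $v$ of $\Theta$ in $\left(\max\{M_t^{1/(t-1)},q\},\tfrac{t}{t-1}q\right)$ by an intermediate-value argument at the endpoints, then verify $u\in(0,q)$ and the primal nonnegativity $p_u,p_v>0$.

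The hardest step will be establishing the sign change of $\Theta$ at the two prescribed endpoints $\max\{M_t^{1/(t-1)},q\}^+$ and $\left(\tfrac{t}{t-1}q\right)^-$, which should use the strict inequality $q>\tfrac{t-1}{t}M_t^{1/(t-1)}$ in an essential way. Geometrically, the upper endpoint $\tfrac{t}{t-1}q$ is the value at which the formula forces $u\to 0^+$, thereby continuously matching with the case (a) distribution at the threshold, while the lower endpoint corresponds either to $v\to q$ (the upper mass hitting the kink) or to $v\to M_t^{1/(t-1)}$ (the case (a) regime, where $u$ again vanishes). A secondary challenge is handling the algebraic elimination in Step~2 carefully enough to isolate the correct branch among multiple algebraic possibilities, which requires using the geometric constraints $0<u<q<v$ together with the interval restrictions on $v$.
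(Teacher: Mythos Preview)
Your proposal is correct and follows essentially the same three-step approach as the paper: Lemma~\ref{lm:loc-1t} for Step~1, Lemmas~\ref{lm:sm q} and~\ref{lm:lg q} for Step~2, Lemmas~\ref{lm:sm q-b} and~\ref{lm:lg q-b} for Step~3, and Proposition~\ref{prop:theta} for the intermediate-value argument on $\Theta$. The only minor deviation is your argument ruling out $q$ as a support point via incompatible one-sided derivatives, whereas the paper handles this through Lemma~\ref{lemma:two-root}; both are valid.
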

When $0<q\le\frac{t-1}{t}M_t^{t-1}$, the optimal value stated in Theorem \ref{tm:1t} is consistent with the optimal value in \cite{das2021heavy}, Proposition 3.1. However, when $q>\frac{t-1}{t}M_t^{t-1}$, the problem \eqref{eq:P-1t} cannot be solved analytically. In particular, upper and lower bounds for the optimal value are derived in \cite{das2021heavy}, Propositions 3.3 and 3.4, while our Theorem \ref{tm:1t} provides a semi-closed form, which is probably the best one can hope for. One implication of the semi-closed form solution is that the optimal value
essentially depends on only one parameter that can be any root of a prespecified function, and one such root always exists and can be found efficiently by the bisection method presented in the next subsection. 
\subsection{The bisection algorithm}
In this subsection, we propose in Algorithm \ref{Alg: bm} a variant of the  bisection method that can find a root of the function $\Theta(\cdot)$ defined in \eqref{Eqn-Theta} in Theorem \ref{tm:1t} and thus solve the moment problem \eqref{eq:P-1t} when $q>\frac{t-1}{t}M_t^{t-1}$. 
\begin{algorithm}[H] 
\caption{Bisection method: {\bf BM}$(f,a,b, \epsilon)$} \label{Alg: bm}
\begin{algorithmic}[1]
\State {\bf input:} continuous function $f(x)$, interval $(a,b)$, tolerance $\epsilon>0$
 \If {$f(a)=0$ and $f'(a)=-sign(f(b))$} set $sign(f(a)) = -sign(f(b))$ \EndIf
\While{true}
    \State update $c = \frac{a+b}{2}$
    \If{$f(c)=0$ or $\frac{b-a}{2}\leq \epsilon$}  \text{Output($c$)}; \text{\bf Stop}\EndIf
    \If{$sign(f(b))=sign(f(c))$} set $b = c$ \Else{set $a = c$} \EndIf
\EndWhile
\end{algorithmic}
\end{algorithm}
Note that there are some subtle differences between Algorithm \ref{Alg: bm} and the standard bisection method. Specifically, Algorithm \ref{Alg: bm} aims to find a  root of a function in an open interval and allows the left starting point itself to be a root of the function, which is exactly the case for the function $\Theta(\cdot)$. In Figure \ref{Fg:1} we plot
one concrete instance of $\Theta(\cdot)$ to illustrate this situation.
In the following proposition, we formally state that Algorithm \ref{Alg: bm} is indeed able to find a root of $\Theta(\cdot)$. The proof is given in the appendix.
\begin{center}
\begin{figure} 
\centering
\includegraphics[width=10cm]{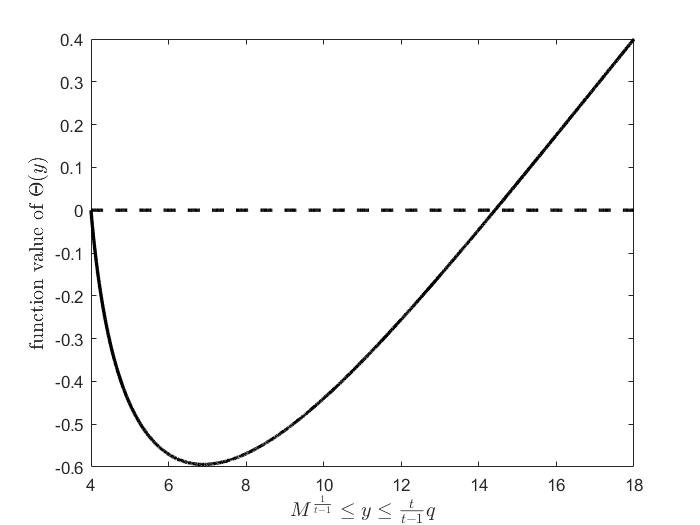}
\caption{{Sample plot of function $\Theta(\cdot)$ when the support is $\{u,v\}$ with $0<u<q<v$, $t=1$, $M_t = 2$, and $q=6$.}}
\label{Fg:1}
\end{figure}    
\end{center}
\begin{proposition}\label{prop:theta}
Suppose that the function $\Theta(\cdot)$ is defined as in \eqref{Eqn-Theta} and that $q>\frac{t-1}{t}M_t^{\frac{1}{t-1}}$. Then
\begin{enumerate}[i)]
	\item $\Theta\left(\frac{t}{t-1}q\right)>0$,
	\item $\Theta(q)<0$ when $q>M_t^\frac{1}{t-1}$,
	\item $\Theta\left(M^{\frac{1}{t-1}}\right)=0$ with $\Theta'\left(M^{\frac{1}{t-1}}\right)<0$ when $M_t^\frac{1}{t-1}\geq q$,
\end{enumerate}
and the bisection method given in Algorithm \ref{Alg: bm}
can find a root $v$  of equation \eqref{Eqn-Theta} in $ \left(\max\left\{M_t^{\frac{1}{t-1}},q\right\},  \frac{t}{t-1}q\right)$
 within $\log \left(\frac{q}{\epsilon (t-1)}\right)$ iterations for a given precision $\epsilon$.
\end{proposition}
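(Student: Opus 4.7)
The plan is to verify the three sign conditions (i)--(iii) by direct computation on $\Theta$ and its derivative, and then combine them with the intermediate value theorem and a standard bisection halving argument. For (i), I would substitute $y = \tfrac{tq}{t-1}$ into $u(y) := \tfrac{tq}{t-1}\cdot\tfrac{y^{t-1}-M_t}{y^t-M_t}$, giving $u(y) = y(y^{t-1}-M_t)/(y^t-M_t)$. A direct cancellation then shows $1 - u(y) = M_t(y-1)/(y^t - M_t)$, so the first term of $\Theta$ collapses to exactly $M_t$ and one obtains $\Theta(\tfrac{tq}{t-1}) = u(\tfrac{tq}{t-1})^t$. The hypothesis $q > \tfrac{t-1}{t}M_t^{1/(t-1)}$ forces $\tfrac{tq}{t-1} > M_t^{1/(t-1)}$, so $u > 0$ and positivity follows. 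For (iii), at $y_0 := M_t^{1/(t-1)}$ the numerator of $u$ vanishes, so $u(y_0)=0$ and direct substitution gives $\Theta(y_0)=0$. Rewriting $\Theta(y) = N(y)/((t-1)(y-1)) + u(y)^t - M_t$ with $N(y) := (t-1)y^t - tq y^{t-1} + M_t(tq-t+1)$, the derivative of $u^t$ at $y_0$ vanishes (using $t>1$), and the quotient rule simplifies after factoring to $\Theta'(y_0) = M_t\bigl[(t-1)y_0 - tq\bigr]/\bigl(y_0(y_0-1)\bigr)$, which is negative precisely because $q > \tfrac{t-1}{t}y_0$ gives $(t-1)y_0 - tq < 0$.

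The main obstacle is expected to be part (ii). Multiplying $\Theta(q)$ by $q-1>0$ and simplifying yields the identity
\begin{equation*}
\Theta(q)(q-1) \;=\; \frac{q(M_t - q^{t-1})}{t-1} \;+\; (q-1)\,u(q)^t,
\end{equation*}
in which the first term is negative while the second is positive, so I must show the negative one dominates. Setting $\Delta := q^{t-1} - M_t > 0$ and using the identity $q^t - M_t = q\Delta + M_t(q-1)$, the desired strict inequality reduces after clearing denominators to
\begin{equation*}
t^t \,(q-1)\, q^{t-1}\, \Delta^{t-1} \;<\; (t-1)^{t-1}\bigl(q\Delta + M_t(q-1)\bigr)^t.
\end{equation*}
I plan to establish this via weighted AM--GM applied to $a/(t-1)$ and $b$ with weights $\tfrac{t-1}{t}$ and $\tfrac{1}{t}$, where $a := q\Delta$ and $b := M_t(q-1)$; raising the resulting bound to the $t$-th power yields $t^t a^{t-1} b \le (t-1)^{t-1}(a+b)^t$. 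Rewriting the left-hand side of the target as $\tfrac{1}{M_t}\cdot t^t a^{t-1} b$ (since $b/M_t = q-1$ and $a = q\Delta$) and then invoking $M_t > 1$ promotes the AM--GM bound to a strict inequality, completing the proof that $\Theta(q) < 0$.

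Finally, for the bisection claim, parts (i)--(iii) together with the intermediate value theorem guarantee a zero of $\Theta$ in the open interval $\bigl(\max\{y_0, q\},\, \tfrac{tq}{t-1}\bigr)$: if $q \ge y_0$ the sign change is immediate from (i) and (ii); if $y_0 > q$, part (iii) gives $\Theta(y_0) = 0$ with $\Theta'(y_0) < 0$, so $\Theta$ dips negative just to the right of $y_0$ and then returns positive by (i), producing an interior root. This degenerate case is precisely what the initial conditional block of Algorithm \ref{Alg: bm} handles, by overriding the sign at $y_0$ according to $\Theta'(y_0)$. In either scenario the initial bracket has length at most $\tfrac{tq}{t-1} - q = \tfrac{q}{t-1}$, so after $n$ bisections its length is at most $\tfrac{q}{2^n(t-1)}$, which falls below $\epsilon$ once $n \ge \log_2\!\bigl(q/(\epsilon(t-1))\bigr)$, matching the stated iteration bound.
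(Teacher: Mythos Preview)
Your argument is correct and, for parts (i), (iii), and the bisection count, proceeds essentially as the paper does. One small slip in the final paragraph: your case split reads ``if $q \ge y_0$ \ldots\ from (i) and (ii)'' versus ``if $y_0 > q$'', but part (ii) requires the strict inequality $q > y_0$; the boundary case $q = y_0$ must be handled by (iii), as in the paper's dichotomy $q > M_t^{1/(t-1)}$ versus $q \le M_t^{1/(t-1)}$.

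For part (ii) the paper takes a somewhat different-looking route. It introduces $y_q := \tfrac{t-1}{t}\,u(q) = q(q^{t-1}-M_t)/(q^t-M_t) \in (0,1)$, checks the identity $(q^t-M_t)/(q-1) = M_t/(1-y_q)$, and arrives at the closed form
\[
\Theta(q) \;=\; \Bigl(\tfrac{t}{t-1}\Bigr)^{t}\,\frac{y_q}{1-y_q}\left[(1-y_q)\,y_q^{\,t-1}\;-\;\frac{M_t(t-1)^{t-1}}{t^{t}}\right].
\]
The bracket is then shown negative by one-variable calculus: $(1-y)y^{t-1}$ is maximized on $(0,1)$ at $y=(t-1)/t$ with value $(t-1)^{t-1}/t^{t}$, strictly less than $M_t(t-1)^{t-1}/t^{t}$ since $M_t>1$. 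Your weighted AM--GM bound $t^{t}a^{t-1}b \le (t-1)^{t-1}(a+b)^{t}$ is precisely this same inequality in different clothing (the calculus maximum of $(1-y)y^{t-1}$ \emph{is} weighted AM--GM), so the two proofs are equivalent in substance. The paper's substitution packages the algebra more compactly, whereas your direct reduction makes the roles of AM--GM and of the strict inequality $M_t>1$ more transparent.
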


To end this subsection, we remark that the iteration complexity of Algorithm \ref{Alg: bm} is independent of the value of $t$,
while the classical SDP approach works only if $t$ is a rational number, and its dimension depends on both the denominator and numerator of $t$.
\subsection{Proof of Theorem \ref{tm:1t}}
This subsection is dedicated to the proof of Theorem \ref{tm:1t}, which follows the three steps in our framework.
\subsubsection{Identify the  rough structure of the optimal support.}
To implement the first step of our framework, we shall show that the  optimal solution of \eqref{eq:P-1t} is a two-point distribution, and characterize how the two supporting points could be allocated. We first present the following lemma as preparation, relegating its
proof to the appendix.
\begin{lemma}\label{lemma:two-root}
Suppose that $H(x):=h_1(x) - h_2(x) \ge 0$ for all $x \in \R_+$, where $h_1(x)$ is a convex function with a strictly increasing derivative and $h_2(x)$ is a two-piece linear function such that
$$
h_2(x) = \left\{
\begin{array}{ll}
    a_1\,x + b_1, & \mbox{if}\;\; x \in [0, q) \\
    a_2\,x + b_2, & \mbox{if}\;\; x \in [q, +\infty)
\end{array}. \right.
$$
Then (i) there is at most one point $u \in [0, q)$ $\big{(}$or $u \in [0, q]$ if $h_2(x)$ is continuous at $q \big{)}$ such that $H(u) =0$, and (ii) there is at most one point $v \in [q, +\infty)$ such that $H(v) =0$. 
\end{lemma}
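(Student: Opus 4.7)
The plan is to exploit strict convexity of $H$ on each linear piece. On the open interval $(0,q)$ we have $H(x)=h_1(x)-a_1 x-b_1$, and since subtracting an affine function does not change the derivative's monotonicity, $H'$ is strictly increasing on $(0,q)$. Hence $H$ is strictly convex on $(0,q)$. The identical argument with $(a_2,b_2)$ in place of $(a_1,b_1)$ shows $H$ is strictly convex on $(q,+\infty)$.

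Next I would invoke the standard fact that a nonnegative strictly convex function on an interval has at most one zero. Concretely, assume two distinct zeros $u_1<u_2$ lie in the same open piece; then for any $\lambda\in(0,1)$, strict convexity forces $H(\lambda u_1+(1-\lambda)u_2)<\lambda H(u_1)+(1-\lambda)H(u_2)=0$, contradicting $H\ge 0$. This immediately gives uniqueness of a zero in $[0,q)$ and in $(q,+\infty)$ individually (noting that the closed endpoint $0$ is handled because $H$ is continuous there and the strict-convexity inequality can be taken with $u_1=0$).

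The only remaining issue is how to treat the point $x=q$, and this is the subtle step. For part (ii), $H$ is continuous on the closed half-line $[q,+\infty)$ (the second piece of $h_2$ is defined at $q$), and the strict-convexity inequality above extends to $u_1=q$ by the same one-line argument, yielding at most one zero in $[q,+\infty)$. For part (i), when $h_2$ is continuous at $q$ the function $H$ is continuous on $[0,q]$, and the left-hand limit of $H'$ at $q$ remains well-defined and larger than $H'$ at any interior point, so strict convexity persists on the closed interval $[0,q]$ and uniqueness of the zero there follows exactly as before. When $h_2$ has a jump at $q$, we simply keep $u\in[0,q)$ as stated in the lemma, and no extra work is needed.

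The main obstacle is essentially bookkeeping at the kink $q$; once one observes that strictly increasing $h_1'$ is preserved under the affine subtraction, both uniqueness claims reduce to the elementary fact that $H\ge 0$ plus strict convexity forbids two zeros. No interaction between the two pieces is required, which is why the lemma cleanly splits into the two claims (i) and (ii).
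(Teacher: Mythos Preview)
Your proposal is correct and follows essentially the same argument as the paper: on each linear piece $H$ is (strictly) convex, and two zeros of a nonnegative convex function would force $H\equiv 0$ on the segment between them, contradicting the strictly increasing derivative of $h_1$. The paper phrases the contradiction via ``$H\equiv 0$ on the segment implies $h_1$ is affine there,'' whereas you invoke the strict convexity inequality directly, but the content is identical and your handling of the endpoint $q$ is the same as the paper's.
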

We are now ready to provide the rough structure of the optimal support in the following lemma.
\begin{lemma}[The rough structure of the optimal support]\label{lm:loc-1t} 
The optimal solution of \eqref{eq:P-1t} is a two-point distribution with support $\{u,v\}$ such that $0\leq u<q<v$.
\end{lemma}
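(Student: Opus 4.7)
The plan is to combine the optimality condition of Theorem \ref{tm: gpm} with Lemma \ref{lemma:two-root}, applied to the dual slack function $H(x;\bz) := z_0 + z_1 x + z_t x^t - (x-q)_+$, whose zeros on $\R_+$ carry the support of the optimal primal by complementary slackness. The main obstacle is establishing $z_t > 0$ in any optimal dual solution; once this positivity is secured, the geometric structure of $H$ forces the claimed two-point configuration through Lemma \ref{lemma:two-root}.

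I first claim that every optimal dual solution satisfies $z_t > 0$. Clearly $z_t \geq 0$, since otherwise $H(x;\bz) \to -\infty$ as $x \to \infty$ and dual feasibility fails. To rule out $z_t = 0$, I observe that the residual piecewise-linear constraint $z_0 + z_1 x \geq (x-q)_+$ on $\R_+$ forces $z_1 \geq 1$ (by comparing slopes at infinity) and $z_0 \geq 0$ (by evaluating at $x=0$), so the corresponding dual value is at least $z_0 + z_1 M_1 \geq 1$. On the primal side, because $q > 0$, $X \geq 0$ and $\Ex_F[X] = 1$, the event $\{X > 0\}$ has positive probability and $(X-q)_+ < X$ holds pointwise on this event, so $\Ex_F[(X-q)_+] < \Ex_F[X] = 1$ for every feasible $F$. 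Strong duality, which holds under the assumption $M_t > 1$, then rules out $z_t = 0$ at optimality.

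Next I exclude $x = q$ from the support. If $H(q;\bz) = 0$, then $H \geq 0$ on $[0,q)$ together with $H(q^-) = 0$ forces $H'(q^-) \leq 0$; but a direct calculation gives $H'(q^+) = H'(q^-) - 1 \leq -1$, so $H(x;\bz) < 0$ for $x$ slightly greater than $q$, contradicting dual feasibility. Combined with the previous step, the function $h_1(x) := z_0 + z_1 x + z_t x^t$ has a strictly increasing derivative on $\R_+$ (using $z_t > 0$ and $t > 1$), so Lemma \ref{lemma:two-root} applies and yields at most one zero of $H$ in $[0, q)$ and at most one in $(q, \infty)$, giving at most two support points, necessarily strictly separated by $q$.

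It remains to rule out a single-point distribution. Any such feasible distribution would have its atom at $x = 1$ by the mean constraint, hence $t$-th moment equal to $1 \neq M_t$, which is infeasible under $M_t > 1$. Therefore the optimal support has exactly two points $\{u, v\}$ with $0 \leq u < q < v$, completing the proof.
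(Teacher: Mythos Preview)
Your proof is correct and follows the same overall approach as the paper: establish $z_t^* > 0$ for the optimal dual, invoke Lemma~\ref{lemma:two-root} on the dual slack function, exclude $x=q$ from the support, and rule out the single-point case via $M_t>1$. The sub-arguments differ only in detail---you rule out $z_t=0$ by a duality-gap comparison (primal value strictly below $1$ versus dual value at least $1$) and exclude $q$ via one-sided derivatives, whereas the paper pins down the dual optimum $(0,1,0)$ under $z_t^*=0$ and then observes that only $x=0$ is tight, and excludes $q$ by a counting argument based on Lemma~\ref{lemma:two-root}---but both routes are equally valid.
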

\begin{proof} 
Since we are assuming that $M_t> M_1=1$, the optimal support cannot be a singleton. 
Let $\bz^*=(z^*_0,z^*_1,z^*_t)^T$ be the optimal solution of the dual problem \eqref{eq:D-1t}, and consider the dual constraint:
$$
H(x;\bz^*):=z^*_0+z^*_1 x+z^*_t x^t - (x - q)_+ \ge 0,\; \forall \; x \in \mathbb{R}_+.
$$ 
According to the complementary slackness condition \eqref{eq:cs}, the dual constraint must be tight at the points in the support of the primal optimal distribution. Therefore, in what follows we shall identify the optimal support by searching the points where the dual constraint could be tight.
We first note that $z^*_t \ge 0$, since otherwise $H(x;\bz^*)<0$ for  sufficiently large $x$ as $t>1$, which contradicts the feasibility of $\bz^*$. Now we consider the easy case of $z^*_t=0$, where the dual constraint reduces to
$$
(z^*_0+q)+ (z^*_1-1)\,x=z^*_0+ z^*_1x- (x - q) \ge 0,\; \forall \; x \ge q,\quad\mbox{and}\quad z^*_0+ z^*_1x \ge 0, \; \forall \; 0 \le x \le q,
$$
which gives us $z^*_1 \ge 1$ and $z^*_0 \ge 0$. As $\bz^*$ minimizes $z_0 + M_1 z_1$ in the objective and $M_1>0$, we obtain  $z^*_0 = 0$ and $z^*_1 = 1$. Consequently, $H(x;\bz^*) = x - (x-q)_+ = 0$ only when $x =0$. That is, the optimal solution of \eqref{eq:P-1t} is a single-point distribution with support $\{0\}$, contradicting the first sentence of the proof. 
Thus, we must have $z^*_t>0$, and the function $z^*_0+z^*_1 x+z^*_t x^t$ is convex in $x$ and has a strictly increasing derivative. Moreover, since $(x-q)_+$ is a two-piece linear function and is continuous at the break point $q$, Lemma \ref{lemma:two-root} implies that there are at most one point $u \in [0, q]$ and at most one point $v \in [q, +\infty)$ such that $H(u;\bz^*)=H(v;\bz^*) =0$. In other words, the optimal distribution of \eqref{eq:P-1t} has two support points, one in the interval $[0, q]$ and one in the interval $[q, +\infty)$. Moreover, we have $u\neq q \neq v$, for otherwise $u$ and $v$ are both included in either $[0,q]$ or $[q, +\infty)$ or the optimal distribution is supported by a single point, which leads to a contradiction. Therefore, the two support points satisfy $0 \le u < q < v$.
\end{proof}

To make the structure of the optimal support in Lemma \ref{lm:loc-1t} more explicit,  
 we shall continue our analysis with two scenarios, one in which the supporting point $u$ equals $0$ and one in which it is strictly greater than $0$.  
\subsubsection{Analysis under the support $\{0,v\}$ with $v>q$.}
We start with the boundary case $u=0$ and provide the analytical form of the optimal support as follows.
\begin{lemma}[Analytical form of the supporting points]\label{lm:sm q}
Suppose that $q>0$, $M_t>M_1=1$ with $t>1$, and the optimal solution of \eqref{eq:P-1t} is a two-point distribution with support $\{0,v\}$ such that $v>q$. Then
we must have $v=M_t^{\frac{1}{t-1}}$
and $q\leq \frac{t-1}{t} M_t^{\frac{1}{t-1}}$.
\end{lemma}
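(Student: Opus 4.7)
The plan is to apply the three steps of Theorem \ref{tm: gpm} in sequence to the prescribed two-point distribution, reading off $v$ from the primal moment constraints, deducing the dual multipliers from complementary slackness and tangency at $v$, and finally extracting the range of admissible $q$ from dual feasibility near the boundary point $0$.

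First, I would parametrize the distribution as $\{0, v\}$ with probability vector $(1-p, p)$ and plug it into \eqref{eq:primal}. Normalization is automatic; the $M_1$ constraint gives $pv = 1$, and the $M_t$ constraint gives $pv^t = M_t$. Dividing the latter by the former yields $v^{t-1} = M_t$, so $v = M_t^{1/(t-1)}$ and $p = M_t^{-1/(t-1)}$. This already resolves the first claim.

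Next, I would exploit \eqref{eq:cs} at both support points together with \eqref{eq:Deriv} at $v$. Since $(0-q)_+ = 0$ and $(v-q)_+ = v-q$ (because $v > q$), complementary slackness yields $z_0 = 0$ and $z_0 + z_1 v + z_t v^t = v - q$. The point $v$ is differentiable for both $H(x;\bz)$ and $(x-q)_+$ and lies in the interior of $\Omega = \R_+$, so \eqref{eq:Deriv} applies and gives $z_1 + t z_t v^{t-1} = 1$. Solving this small linear system yields
\[
z_0 = 0, \qquad z_t = \frac{q}{(t-1)v^t}, \qquad z_1 = 1 - \frac{tq}{(t-1)v},
\]
with $z_t > 0$ since $q > 0$.

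Third, I would invoke dual feasibility $H(x;\bz) \ge 0$ for all $x \in \R_+$. On $[0,q]$ this reduces to $x\bigl(z_1 + z_t x^{t-1}\bigr) \ge 0$. Since $H(0;\bz) = 0$ and $z_t x^{t-1} \to 0$ as $x \to 0^+$, the sign of $H$ near $x = 0$ is governed by $z_1$, so feasibility forces $z_1 \ge 0$. Substituting the formula for $z_1$ and $v = M_t^{1/(t-1)}$ gives exactly $q \le \tfrac{t-1}{t} M_t^{1/(t-1)}$, completing the proof. (One should also briefly check $H \ge 0$ on $[q,\infty)$, but this follows from $z_t > 0$ and convexity of $z_1 x + z_t x^t - (x-q)$ together with the tangency at $v$.)

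The main conceptual point — rather than any computational obstacle — is recognizing that $x = 0$ is a \emph{boundary} point of $\Omega$, so the tangent condition at $0$ is \emph{not} forced by Theorem \ref{tm: gpm}(iv); the extra bound on $q$ must instead be read off from the one-sided dual feasibility requirement $H'(0^+) \ge 0$, which is what translates the dual constraint into the inequality $q \le \tfrac{t-1}{t} M_t^{1/(t-1)}$.
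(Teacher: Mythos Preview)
Your proposal is correct and follows essentially the same approach as the paper: use \eqref{eq:primal} to pin down $v=M_t^{1/(t-1)}$, use \eqref{eq:cs} and \eqref{eq:Deriv} at $v$ to solve for the dual variables, and then read off $z_1\ge 0$ (hence $q\le \tfrac{t-1}{t}M_t^{1/(t-1)}$) from dual feasibility near the boundary point $x=0$. Your closing remark that the tangent condition does not apply at $0$ because it is a boundary point, so the bound on $q$ must come from one-sided feasibility, is exactly the mechanism the paper uses as well.
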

\begin{proof}
Suppose that the optimal distribution of (\ref{eq:P-1t}) is supported by $\{0,v\}$ with probabilities $p_1^*$ and $p_2^*$, respectively, and that $\bz^*=(z^*_0,z^*_1,z^*_t)^T$ is an optimal solution of the dual problem \eqref{eq:D-1t}.
Let 
\begin{equation}\label{Func:H-1t-u-v}
{H}(x;\bz^*)=z_0^*+z_1^*x +z_t^*x^t-(x-q)_+.  
\end{equation}
In this case, condition \eqref{eq:Deriv} gives ${H}'(v;\bz^*)= z_1^* + t v^{t-1}  z_t^* -1 =0$. According to Theorem \ref{tm: gpm}, $\bz^*$ and $\bp^* = (p_1^*, p_2^*)^T$ satisfy conditions \eqref{eq:primal}, \eqref{eq:cs}, and \eqref{eq:Deriv}, which are equivalent to
\begin{equation} \label{Cond:newsvendor-1t}
\left[
\begin{array}{cc}
1 & 1   \\
0 & v  \\
0 & v^{t}\\ 
\end{array}
\right] \left[  
\begin{array}{c}
p_1^*\\
p_2^* \\
\end{array}
\right] =  \left[  
\begin{array}{c}
1\\
1 \\
M_t \\
\end{array}
\right]
\,\,\, \text{and  } \,\,\,
\left[
\begin{array}{ccc}
1 & 0 & 0  \\
1 & v & v^t \\
0 & 1 &t v^{t-1} 
\end{array}
\right] \left[  
\begin{array}{c}
z^*_0\\
z^*_1 \\
z^*_t\\
\end{array}
\right] =  \left[  
\begin{array}{c}
0\\
v-q \\
1 \\
\end{array}
\right].
\end{equation}
Solving the above equations gives 
\begin{equation}\label{Sol:1t}   
v= M_t^{\frac{1}{t-1}}>1, \quad\;
\left[  
\begin{array}{c}
p_1^* \\
p_2^* \\
\end{array}
\right]= \left[  
\begin{array}{c}
1-M_t^{-\frac{1}{t-1}}\\
M_t^{-\frac{1}{t-1}}\\
\end{array}
\right],
\,\,\, \text{and } \,\,\, 
\left[  
\begin{array}{c}
z^*_0\\
z^*_1 \\
z^*_t\\
\end{array}
\right]= \left[  
\begin{array}{c}
0\\
1-\frac{tq}{t-1} M_t^{-1/(t-1)} \\
\frac{q}{t-1} M_t^{-t/(t-1)}
\end{array}
\right]. 
\end{equation}
In addition, the second system of equations in \eqref{Cond:newsvendor-1t} indicates that $z^*_0=H(0;\bz^*)=0$. By invoking
the dual feasibility of $\bz^*$, we have
$$
x(z^*_1 +z^*_t x^{t-1})=z^*_1 x+z^*_t x^t=z^*_0+z^*_1 x+z^*_t x^t = H(x;\bz^*)>0,\;\forall\;0<x<q.
$$
That is, $z^*_1 +z^*_t x^{t-1}\ge 0$ for arbitrarily small $x$. Therefore, $z^*_1 \ge 0$,
which combined with \eqref{Sol:1t} guarantees that $q\leq \frac{t-1}{t} M_t^{\frac{1}{t-1}}$. 
\end{proof}

Given the analytical form of the optimal support, we can solve and verify the optimal distribution for this case.
\begin{lemma}[Analytical form of the optimal distribution] \label{lm:sm q-b}
Suppose that $M_t>M_1=1$, with $t>1$ and $\frac{t-1}{t} M_t^{\frac{1}{t-1}} \ge q>0$. Then the optimal distribution for problem \eqref{eq:P-1t} can be characterized as
\begin{equation}
\label{opt-disn-1t-a}
X^*= \begin{cases}
      0, & w.p. \;1-  M_t^{-\frac{1}{t-1}}\\
       M_t^{\frac{1}{t-1}}, & w.p. \; M_t^{-\frac{1}{t-1}}\\
    \end{cases},  \quad \mbox{with the optimal value }\;   \max_{F \in \F_{1t}}  \Ex_{F}[(X - q)_+]=1-q M_t^{-\frac{1}{t-1}}.
\end{equation}
\end{lemma}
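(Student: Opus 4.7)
The plan is to use Theorem \ref{tm: gpm} together with Lemma \ref{lm:sm q} as follows. Lemma \ref{lm:loc-1t} already told us that the optimal support has the form $\{u,v\}$ with $0\le u<q<v$, and Lemma \ref{lm:sm q} solved the linear system \eqref{Cond:newsvendor-1t} associated with the boundary case $u=0$, producing an explicit candidate pair $(\bp^*,\bz^*)$ that automatically satisfies \eqref{eq:primal}, \eqref{eq:cs}, and \eqref{eq:Deriv}. By the sufficiency direction of Theorem \ref{tm: gpm}, it therefore remains only to check the primal feasibility $\bp^*\ge 0$ and the dual feasibility $H(x;\bz^*)\ge 0$ for all $x\in\mathbb{R}_+$; once these hold, the claimed optimal value follows by direct substitution.

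Primal feasibility is immediate. Since $M_t>1$ and $t>1$, we have $M_t^{-1/(t-1)}\in(0,1)$, so both $p_1^*=1-M_t^{-1/(t-1)}$ and $p_2^*=M_t^{-1/(t-1)}$ lie in $(0,1)$; the moment identities have already been used to derive the candidate, so no further check is needed.

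The main work is dual feasibility, where I split $\mathbb{R}_+$ into the two differentiable pieces $[0,q]$ and $[q,+\infty)$ dictated by the break point of $(x-q)_+$. On $[0,q]$ the function reduces to $H(x;\bz^*)=z_1^* x + z_t^* x^t$. Using the closed forms in \eqref{Sol:1t}, the hypothesis $q\le \tfrac{t-1}{t}M_t^{1/(t-1)}$ is exactly what forces $z_1^*\ge 0$, and since $z_t^*>0$, this piece is non-negative. On $[q,+\infty)$ the function becomes $H(x;\bz^*)=z_t^* x^t+(z_1^*-1)x+q$, which is strictly convex in $x$ because $z_t^*>0$ and $t>1$. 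By construction the tangent condition \eqref{eq:Deriv} at $v=M_t^{1/(t-1)}$ gives $H'(v;\bz^*)=0$, and the complementary slackness \eqref{eq:cs} gives $H(v;\bz^*)=0$, so $v$ is the unique global minimizer of $H(\,\cdot\,;\bz^*)$ on $[q,+\infty)$ and $H(x;\bz^*)\ge 0$ throughout. This is the only nontrivial step, but it is essentially free because the tangent condition was the very mechanism used to choose $v$.

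Finally, the optimal value is computed by evaluating the primal objective on the candidate distribution:
$$\Ex_{F^*}[(X-q)_+]=p_1^*\cdot (0-q)_+ + p_2^*\cdot (v-q)_+ = M_t^{-1/(t-1)}\bigl(M_t^{1/(t-1)}-q\bigr) = 1-qM_t^{-1/(t-1)},$$
matching the stated formula and (via strong duality) the dual objective $z_0^*+z_1^*+z_t^*M_t$ obtained from \eqref{Sol:1t}. The only place I expect anything to go slightly beyond bookkeeping is the inequality chain yielding $z_1^*\ge 0$, but it is a one-line rearrangement of the assumption on $q$.
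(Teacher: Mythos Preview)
Your proposal is correct and follows essentially the same route as the paper: construct the candidate $(\bp^*,\bz^*)$ from \eqref{Sol:1t}, verify primal feasibility via $M_t>1$, and verify dual feasibility by treating the pieces $[0,q]$ and $[q,+\infty)$ separately, using $z_0^*=0$, $z_1^*\ge 0$, $z_t^*>0$ on the first piece and the convexity plus $H(v;\bz^*)=H'(v;\bz^*)=0$ on the second. The only cosmetic difference is that the paper explicitly records that the minimizer on $[q,+\infty)$ could a priori be the endpoint $q$ before noting $H(q;\bz^*)\ge 0$, whereas you implicitly use $v=M_t^{1/(t-1)}>q$ to place the unique critical point in the interior; both arguments are valid.
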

\begin{proof} We first construct 
$\bp^*=(p^*_1, p^*_2)^T$ and $\bz^*=(z^*_0, z^*_1, z^*_t)^T$ in accordance with \eqref{Sol:1t}, where $p^*_1$ and $p^*_2$ correspond to the probabilities associated with $0$ and $ M_t^{\frac{1}{t-1}}$ in the support of the distribution described in \eqref{opt-disn-1t-a}. 
We shall show that $\bp^*$ and $\bz^*$ are the optimal solutions for the primal problem \eqref{eq:P-1t} and the dual problem \eqref{eq:D-1t}, respectively.
Recalling that $M_t>1$, $ p_1^*, p_2^* \in (0,1)$, and thus $\bp^*$ is a primal feasible solution. 
Next, we verify the dual feasibility of $z^*$, i.e., ${H}(x;\bz^*)\ge 0$ for any $x \ge 0$ with ${H}(x;\bz^*)$ defined as in \eqref{Func:H-1t-u-v}.
According to \eqref{Sol:1t} and $\frac{t-1}{t} M_t^{\frac{1}{t-1}} \ge q$, we have $z_0^*=0$, $z_1^* \geq 0$, and $z_t^* > 0$, and hence $H(x;\bz^*) \ge 0$ for any $x \in [0,q]$.  When $x \ge q$, 
$H(x;\bz^*) $ is convex, as in this case, $${H}''(x;\bz^*)=t(t-1)z_t^*x^{t-2}=t q M_t^{-t/(t-1)}x^{t-2}>0.$$
In addition, we have $H\left(M_t^{1/(t-1)};\bz^*\right)=H(v;\bz^*)=H'(v;\bz^*)=0$, as $z^*$ is the solution to the second set of equations in \eqref{Cond:newsvendor-1t}, and $x=M_t^{1/(t-1)}$ is the unique root of $H'(x;\bz^*)= z_1^* + t z_t^*x^{t-1}-1=0$.
Then the global minimizer of $H(x;\bz^*)$ on $[q,+\infty)$ is taken either at the point $M_t^{1/(t-1)}$ or at the end point $q$, where we already have shown that $H(q;\bz^*) \ge 0$.
Therefore, 
$H(x;\bz^*) \ge \min\{H(q;\bz^*), H(M_t^{1/(t-1)};\bz^*) \} = 0 $ holds when $x \ge q$, and $\bz^*$ is a feasible solution for the dual problem \eqref{eq:D-1t}. 
In addition, $(\bp^*, \bz^*)$ also satisfies the complementary slackness condition due to the second set of equations in \eqref{Cond:newsvendor-1t}, and thus we conclude that $(\bp^*, \bz^*)$ is an optimal primal-dual solution pair such that the distribution defined in \eqref{opt-disn-1t-a} is optimal for \eqref{eq:P-1t}. 
\end{proof}
\subsubsection{Analysis under the support $\{u,v\}$ with $0< u < q < v$.}
We now consider the case where $u>0$ and provide the semi-analytical form of the optimal support.
\begin{lemma}[Semi-analytical form of the supporting points] \label{lm:lg q}
Suppose that $M_t>M_1=1$ with $t>1$ and $q>0$, and that the optimal solution of \eqref{eq:P-1t} is a two-point distribution with support $\{u,v\}$ such that $0<u<q<v$. Then we must have $u=\frac{tq}{t-1}\frac{v^{t-1}-M_t}{v^t-M_t}$, and $v \in \left(\max\left\{M_t^{\frac{1}{t-1}},q\right\},  \frac{t}{t-1}q\right)$ is a root of equation \eqref{Eqn-Theta}. Moreover, in this case we also have
$q> \frac{t-1}{t} M_t^{\frac{1}{t-1}}$.
\end{lemma}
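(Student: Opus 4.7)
The plan is to exploit all four optimality conditions of Theorem \ref{tm: gpm} simultaneously at the support $\{u,v\}$ with $0 < u < q < v$. Both $u$ and $v$ are differentiable interior points of $(x-q)_+$, since they lie strictly away from the kink at $q$, so we obtain a nonlinear system of seven equations in the seven unknowns $(p_1, p_2, u, v, z_0, z_1, z_t)$: the three primal moment equations \eqref{eq:primal}, the two complementary slackness equations \eqref{eq:cs} at $u$ (where $(u-q)_+ = 0$) and at $v$ (where $(v-q)_+ = v - q$), and the two tangent equations \eqref{eq:Deriv} coming from $H^\prime(u;\bz) = 0$ and $H^\prime(v;\bz) = 0$.

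The first step is to eliminate the dual variables. The two tangent equations form a $2\times 2$ linear system in $(z_1, z_t)$ whose solution is $z_t = \frac{1}{t(v^{t-1}-u^{t-1})}$ and $z_1 = -\frac{u^{t-1}}{v^{t-1}-u^{t-1}}$; complementary slackness at $u$ then yields $z_0 = \frac{(t-1)u^t}{t(v^{t-1}-u^{t-1})}$. Substituting this triple into complementary slackness at $v$ and clearing denominators collapses that equation to the symmetric identity
\[
(t-1)(v^t - u^t) \;=\; tq\,(v^{t-1} - u^{t-1}).
\]

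The second step is to eliminate $(p_1,p_2)$ from the primal system. The first two primal equations give $p_1 = \frac{v-1}{v-u}$ and $p_2 = \frac{1-u}{v-u}$. Plugging these into $p_1 u^t + p_2 v^t = M_t$, and using $1 - p_2 v = p_1 u$ separately, produces the two identities $v^t - M_t = p_1(v^t - u^t)$ and $v^{t-1} - M_t = p_1 u(v^{t-1} - u^{t-1})$. Taking their ratio (the factor $p_1$ cancels) and invoking the symmetric identity above to rewrite $\frac{v^t - u^t}{v^{t-1} - u^{t-1}} = \frac{tq}{t-1}$ yields exactly $u = \frac{tq}{t-1}\cdot\frac{v^{t-1}-M_t}{v^t-M_t}$. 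Substituting this expression into the remaining primal equation $p_2(v^t - u^t) + u^t = M_t$ and regrouping so that the factors $1 - u$ and $u^t$ are exposed then reduces the equation to $\Theta(v) = 0$.

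The last step is to pin down the range of $v$. The bound $v > q$ is part of the hypothesis, and since $u > 0$ together with $v > u$ forces $v^{t-1} - u^{t-1} > 0$, the identity $v^{t-1} - M_t = p_1 u(v^{t-1}-u^{t-1}) > 0$ delivers $v > M_t^{1/(t-1)}$. For the upper bound I study $f(x) := (t-1)x^t - tq\,x^{t-1}$, whose derivative $f'(x) = t(t-1)x^{t-2}(x-q)$ is strictly negative on $(0,q)$ and strictly positive on $(q,\infty)$, with $f(0) = f(\frac{tq}{t-1}) = 0$ and $f(q) = -q^t$. Since the symmetric identity states $f(u) = f(v)$ with $u \in (0,q)$, the only admissible value $v > q$ must lie in $(q, \frac{tq}{t-1})$. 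Combining $v > M_t^{1/(t-1)}$ with $v < \frac{tq}{t-1}$ forces $q > \frac{t-1}{t}M_t^{1/(t-1)}$. I anticipate that the most delicate piece of algebra is the clearing-of-denominators step that collapses complementary slackness at $v$ into the symmetric identity; every subsequent reduction is direct once that identity is available.
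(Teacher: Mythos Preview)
Your proposal is correct and follows essentially the same approach as the paper: both set up the seven equations from Theorem~\ref{tm: gpm}, derive the key identity $\frac{v^t-u^t}{v^{t-1}-u^{t-1}}=\frac{tq}{t-1}$, solve for $u$ in terms of $v$, and then verify the range of $v$. The only cosmetic differences are that the paper phrases the elimination of $(\bp,\bz)$ via determinant conditions rather than by explicit substitution, and it obtains the upper bound $v<\frac{tq}{t-1}$ by the one-line rewriting $\frac{tq}{t-1}=v+u^{t-1}\frac{v-u}{v^{t-1}-u^{t-1}}>v$ instead of your auxiliary function $f(x)=(t-1)x^t-tqx^{t-1}$.
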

\begin{proof}
Suppose that the optimal distribution of (\ref{eq:P-1t}) is supported by $\{u,v\}$ with probabilities $p_1^*$ and $p_2^*$, respectively,
and that $\bz^*=(z^*_0,z^*_1,z^*_t)^T$ is an optimal solution of the dual problem (\ref{eq:D-1t}).
Recalling that ${H}(x;\bz^*)$ is defined in \eqref{Func:H-1t-u-v}
and according to Theorem \ref{tm: gpm}, we have ${H}'(u;\bz^*)= z_1^* + t u^{t-1}  z_t^* =0 $ and ${H}'(v;\bz^*)= z_1^* + t v^{t-1}  z_t^*-1 =0 $.
Therefore, $\bz^*$ and $\bp^* = (p_1^*, p_2^*)^T$ satisfy the conditions \eqref{eq:primal}, \eqref{eq:cs}, and \eqref{eq:Deriv}, which are equivalent to
\begin{equation}  \label{eq:1-t large}
\left[
\begin{array}{cc}
1 & 1  \\
u & v  \\
u^t & v^t \\
\end{array}
\right] \left[  
\begin{array}{c}
p_1^*\\
p_2^* \\
\end{array}
\right]= \left[  
\begin{array}{c}
1\\
1 \\
M_t \\
\end{array}
\right]
\,\,\,\text{and } \,\,\,
\left[
\begin{array}{ccc}
1 & u & u^t  \\
1 & v & v^t \\
0 & 1 &t u^{t-1} \\
0 & 1 & t v^{t-1} \\
\end{array}
\right] \left[  
\begin{array}{c}
z^*_0\\
z^*_1 \\
z^*_t\\
\end{array}
\right] = \left[  
\begin{array}{c}
0\\
v-q \\
0 \\
1 \\
\end{array}
\right].
\end{equation}
The first system of equations in \eqref{eq:1-t large} has a solution if and only if
\begin{eqnarray*}
	0 = \left|
	\begin{array}{ccc}
1 & 1  & 1\\
u & v  & 1\\
u^t & v^t & M_t\\
	\end{array}
	\right|  \overset{\rm{row2-row1,  row3- M_t \cdot row1 }}= \left|
	\begin{array}{ccc}
1 & 1  & 1\\
u-1 & v-1  & 0\\
u^t-M_t & v^t-M_t & 0\\
	\end{array}
	\right|= \left|
	\begin{array}{cc}
u-1 & v-1  \\
u^t-M_t & v^t-M_t \\
	\end{array}
	\right|. \\
\end{eqnarray*}
That is,
\begin{align}\label{eq:u-v-1t}
\frac{M_t-u^t}{1-u}=\frac{M_t-v^t}{1-v}=\frac{v^t-u^t}{v-u}.
\end{align}
By a similar argument, the second set of equations in \eqref{eq:1-t large} has a solution if and only if
\begin{eqnarray}
	  0&=&\left|
	\begin{array}{cccc}
		1 & u & u^t  & 0\\
		1 & v & v^t  & v-q\\
		0 & 1 & t u^{t-1}  & 0\\
		0 & 1 & t v^{t-1} & 1\\
	\end{array}
	\right|  \overset{\rm{row2-(v-q)\cdot row4}}= \left|
	\begin{array}{cccc}
1 & u & u^t  & 0\\
1 & q & v^t-(v-q)t v^{t-1}  & 0\\
0 & 1 & t u^{t-1}  & 0\\
0 & 1 & t v^{t-1} & 1\\
	\end{array}
	\right|
	= \left|
	\begin{array}{ccc}
1 & u & u^t  \\
1 & q & v^t-(v-q)t v^{t-1}  \\
0 & 1 & t u^{t-1}  \\
	\end{array}
	\right| \nonumber\\
	& \overset{\rm{row2- row1}}= &\left|
	\begin{array}{ccc}
1 & u & u^t  \\
0 & q-u & v^t-u^t-(v-q)t v^{t-1}  \\
0 & 1 & t u^{t-1}  \\
	\end{array}
	\right| 
	 =
	\left|
	\begin{array}{cc}
 q-u & v^t-u^t-(v-q)t v^{t-1}  \\
 1 & t u^{t-1}  \\
	\end{array}
	\right|, \nonumber
\end{eqnarray}
 which is precisely $(q-u)t u^{t-1} =v^t-u^t-(v-q)t v^{t-1}$, or equivalently, 
 \begin{equation}
  \frac{v^t-u^t}{v^{t-1}-u^{t-1}}=\frac{tq}{t-1}. \label{Det=0-1t}   
 \end{equation}
Consequently, we have 
$$1-\frac{(t-1)u}{tq}=1-\frac{v^{t-1}u-u^t}{v^t-u^t}=v^{t-1}\frac{v-u}{v^t-u^t}=v^{t-1}\frac{v-1}{v^t-M_t},$$
where the last equality is due to \eqref{eq:u-v-1t}.
Therefore, we can represent
$u$ as $\frac{t q}{t-1}\frac{v^{t-1}-M_t}{v^t-M_t}$, and substitute $u$ in 
$\frac{v^t-M_t}{v-1}(1-u)+u^t-M_t=0$, which is a reformulation of \eqref{eq:u-v-1t}. Consequently, $v$ is a root of $\Theta(\cdot)$. In addition, we have further estimations of the range of $v$. In particular, due to our assumption that $v>u>0$ and the validity of the $t$-th moment constraint, we have $u=\frac{tq}{t-1}\frac{v^{t-1}-M_t}{v^t-M_t}>0$ and $v^t>p_1^*u^t+p_2^* v^t=M_t$. Combining those two inequalities yields 
$v^{t-1}-M_t > 0$, or equivalently, $v > M_t^{\frac{1}{t-1}}$. Equation
\eqref{Det=0-1t} also implies that $\frac{tq}{t-1}= \frac{v^t-u^t}{v^{t-1}-u^{t-1}}=v+u^{t-1}\frac{v-u}{v^{t-1}-u^{t-1}}>v$, and thus we conclude that $v \in \left(\max\left\{M_t^{\frac{1}{t-1}},q\right\},  \frac{t}{t-1}q\right)$. As the upper bound $\frac{tq}{t-1}$ must exceed the lower bound $M^{\frac{1}{t-1}}$, we have $q>\frac{t-1}{t}M^{\frac{1}{t-1}}$.
\end{proof}.

Finally, the semi-analytical form of the optimal support enables us to identify the optimal distribution as follows.
\begin{lemma}[Semi-analytical form of  the optimal distribution]\label{lm:lg q-b}
Suppose that $M_t>M_1=1$, with $t>1$, and $q> \frac{t-1}{t} M_t^{\frac{1}{t-1}}$. Let 
$u=\frac{tq}{t-1}\frac{v^{t-1}-M_t}{v^t-M_t}$ and let $v \in \left(\max\left\{M_t^{\frac{1}{t-1}},q\right\},  \frac{t}{t-1}q\right)$ be any root of equation \eqref{Eqn-Theta}. Then we have $0<u<q<v$, and the optimal distribution of problem \eqref{eq:P-1t} can be  characterized  as
\begin{equation}\label{opt-disn-1t-b}
   X^*= \begin{cases}
      u, & w.p. \;\frac{v-1}{v-u}\\
      v, & w.p.  \;\frac{1-u}{v-u}\\
    \end{cases},\quad \mbox{with optimal value} \;   \max_{F \in \F_{1t}}  \Ex_{F}[(X - q)_+]=\frac{(v-q)(1-u)}{v-u}. 
\end{equation}
\end{lemma}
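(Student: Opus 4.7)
The plan is to verify optimality of the proposed primal--dual pair via Theorem \ref{tm: gpm}. I would define the primal candidate as the two-point distribution $\{u, v; p_1^*, p_2^*\}_D$ with $p_1^* = (v-1)/(v-u)$ and $p_2^* = (1-u)/(v-u)$, and construct the dual candidate $\bz^*$ by solving the $3\times 3$ linear system obtained from \eqref{eq:cs} and \eqref{eq:Deriv} at $\{u, v\}$. Subtracting the two tangent equations gives $z_t^* = 1/(t(v^{t-1} - u^{t-1})) > 0$, after which $z_1^*$ and $z_0^*$ are uniquely determined. Complementary slackness then holds automatically at $u$ and $v$, and the direct computation $\sum_i g(x_i) p_i^* = p_2^*(v-q) = (v-q)(1-u)/(v-u)$ matches the claimed optimal value.

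Before addressing feasibility, I would establish the chain $0 < u < q < v$ together with the crucial bound $u < 1$, which is needed so that $p_2^* \geq 0$. The inequality $v > q$ is immediate from the hypothesis $v > \max\{M_t^{1/(t-1)}, q\}$, and $u > 0$ follows because $v > M_t^{1/(t-1)} > 1$ makes both numerator and denominator in the formula for $u$ strictly positive. Both $u < q$ and $u < 1$ hinge on the auxiliary function $N(x) := (t-1)x^t - tx^{t-1} + M_t$: an algebraic rearrangement shows that $u < q$ is equivalent to $N(v) > 0$, and since $N(1) = M_t - 1 > 0$ together with $N'(x) = t(t-1)x^{t-2}(x-1) > 0$ for $x > 1$ gives $N(v) > 0$. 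For $u < 1$, I plan to rewrite $\Theta(v) = 0$ equivalently as $\phi(u) = \phi(v)$ where $\phi(x) := (x^t - M_t)/(x-1)$; since $\phi'(x) = N(x)/(x-1)^2 > 0$ on $(1, \infty)$, the function $\phi$ is strictly increasing there, so $u \in (1, v)$ would contradict $\phi(u) = \phi(v)$, while the boundary case $u = 1$ is ruled out by direct substitution into $\Theta$.

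With $0 < u < 1 < v$ in hand, primal feasibility reduces to checking the moment equations: $p_1^* u + p_2^* v = 1$ holds by construction of the probabilities, and the $t$-th moment constraint is precisely the identity $(u^t - M_t)(v-1) = (v^t - M_t)(u-1)$, which is equivalent to $\Theta(v) = 0$. The main obstacle will be verifying dual feasibility, namely $H(x; \bz^*) := z_0^* + z_1^* x + z_t^* x^t - (x-q)_+ \geq 0$ for all $x \in \R_+$. I would exploit the convex-polynomial structure: setting $P(x) := z_0^* + z_1^* x + z_t^* x^t$, the sign $z_t^* > 0$ renders $P$ strictly convex on $\R_+$. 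On $[0, q]$, conditions \eqref{eq:cs} and \eqref{eq:Deriv} at $u$ give $P(u) = 0 = P'(u)$, making $u$ the global minimizer, so $P(x) \geq 0 = (x-q)_+$. On $[q, +\infty)$, the convex function $G(x) := P(x) - (x - q)$ satisfies $G(v) = 0 = G'(v)$, so $v$ is its global minimizer and $G(x) \geq 0$. Therefore $H \geq 0$ throughout $\R_+$, and all four conditions of Theorem \ref{tm: gpm} are verified, completing the proof.
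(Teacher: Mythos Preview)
Your proposal is correct and follows essentially the same approach as the paper: construct the primal--dual pair, establish $0<u<q$ and $u<1$ via monotonicity arguments, then verify dual feasibility through the convexity of $H(x;\bz^*)$ on each piece. Your auxiliary functions $N(x)=(t-1)x^t-tx^{t-1}+M_t$ and $\phi(x)=(x^t-M_t)/(x-1)$ are exactly the paper's $-G_1$ and $G_2$; the only cosmetic difference is that you anchor the monotonicity of $N$ at $N(1)=M_t-1>0$ rather than at $M_t^{1/(t-1)}$, which is arguably a slightly cleaner base point.
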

\begin{proof}
Suppose that $v \in \left(\max\left\{M_t^{\frac{1}{t-1}},q\right\},  \frac{t}{t-1}q\right) $ is any root of $\Theta(\cdot)=0$ and construct $u=\frac{t q}{t-1}\frac{v^{t-1}-M_t}{v^t-M_t}>0$. We first want to show that $q>u =\frac{t q}{t-1}\frac{v^{t-1}-M_t}{v^t-M_t}$, or equivalently, that the function $G_1(y):=t y^{t-1}-(t-1)y^t - M_t<0$ at point $v$. Since $G'_1(y) = t(t-1)y^{t-2}(1-y)<0$ when $y>1$, $G_1(y)$ is a strictly decreasing function on $[1,+\infty)$, which combined with $v > M_t^{\frac{1}{t-1}}$ implies 
$$G_1(v) < G_1(M_t^{\frac{1}{t-1}})=t\left(M_t^{\frac{1}{t-1}} \right)^{t-1}-(t-1)\left(M_t^{\frac{1}{t-1}} \right)^{t}-M_t=(t-1)\left(M_t - M_t^{\frac{t}{t-1}}\right)<0.$$
Therefore, $0< u < q < v$ holds and the construction
\begin{equation*} 
    \left[  
\begin{array}{c}
p_1^* \\
p_2^* \\
\end{array}
\right]= \left[  
\begin{array}{c}
 \frac{v-1}{v-u} \\
\frac{1-u}{v-u} \\
\end{array}
\right] 
\,\,\, \text{and } \,\,\,
\left[  
\begin{array}{c}
z^*_0\\
z^*_1 \\
z^*_t\\
\end{array}
\right] =  \left[  
\begin{array}{c}
\frac{(t-1)u^t}{t(v^{t-1}-u^{t-1})}\\
-\frac{u^{t-1}}{v^{t-1}-u^{t-1}} \\
\frac{1}{t(v^{t-1}-u^{t-1})} \\
\end{array}
\right]
\end{equation*}
is well defined, and $\bp^*=(p^*_1, p^*_2)^T$ and $\bz^*=(z^*_0, z^*_1, z^*_t)^T$ are the solutions of the two linear equations in \eqref{eq:D-1t}. Therefore, 
to confirm the primal feasibility, all that remains is to show that $0< p^*_1,p^*_2<1$, or equivalently, $u < 1 < v$, due to the construction of $p^*$. We first observe that $v>M_t^{\frac{1}{t-1}}>1$, as $v \in \left(\max\left\{M_t^{\frac{1}{t-1}},q\right\},  \frac{t}{t-1}q \right)$. Next we consider the function $G_2(y)=\frac{y^t-M_t}{y-1}$, and a straightforward computation shows that
$$
G'_2(y)= \frac{(t-1)y^t -ty^{t-1} +M_t }{(y-1)^2}\;\mbox{and}\; \left( (t-1)y^t -ty^{t-1} +M_t \right)'=t(t-1)y^{t-2}(y-1)>0\;\mbox{when}\;y>1.
$$
Therefore,  as long as $y>1$, $G'_2(y)>0$, and thus $G_2(y)$ is a strictly increasing function. Recall that we have already proved that $v>\max\{1,u\}$ and $G_2(v)=\frac{v^t-M_t}{v-1}=\frac{u^t-M_t}{u-1}=G_2(u)$ holds by \eqref{eq:u-v-1t}. Then we must have $u<1$, as desired, for $G_2(y)$ is strictly increasing on $(1,+\infty)$ and cannot have the same value at $u$ and $v$.
Next, we verify the dual feasibility of $\bz^*$.
Recalling that $H(x;\bz^*)$ is defined as in \eqref{Func:H-1t-u-v}, the second equation in \eqref{eq:1-t large} indicates that
\begin{equation}\label{H-u-v-0}
 {H}'(u;\bz^*)={H}'(v;\bz^*)=0\; \;\,\mbox{and}\;\, {H}(v;\bz^*)={H}(u;\bz^*)=0.
\end{equation}
Moreover, 
since we have shown that $u<1<v$, $z_t^*=\frac{1}{t(v^{t-1}-u^{t-1})}>0$ and ${H}''(x;\bz^*)=t(t-1)z_t^*x^{t-2}\ge 0$ when $x\in [0,q) \bigcup (q,\infty)$. Therefore, ${H}(x;\bz^*)$ is convex on $[0,q)$ and $(q,\infty)$, which combined with
$u <  q < v$ and \eqref{H-u-v-0} implies that
$$
{H}(x;\bz^*) \ge \left\{\begin{array}{cc}
      {H}(v;\bz^*)=0,& \mbox{when}\quad x \in (q,\infty) \\
     {H}(u;\bz^*)=0, & \mbox{when}\quad x \in [0,q)
\end{array}
\right. .
$$
Since ${H}(x;\bz^*)$ is continuous at $q$, we conclude that ${H}(x;\bz^*) \ge 0$ for all $x \ge 0$ and $\bz^*$ is a dual feasible solution. Observe that 
the complementary slackness condition is already implied by 
${H}(u;\bz^*)={H}(v;\bz^*)=0$, and hence
$(\bp^*, \bz^*)$ is indeed an optimal primal-dual solution such that the distribution defined in \eqref{opt-disn-1t-b} is optimal for \eqref{eq:P-1t}. 
\end{proof}
\subsubsection{Proof of Theorem \ref{tm:1t}.}
Step $2$ and Step $3$ of our framework have been accomplished under the two scenarios, and accordingly, Theorem \ref{tm:1t} readily follows from Lemma \ref{lm:sm q-b} and Lemma \ref{lm:lg q-b}.
\section{Minimizing the upper partial moment with the 1st-order upper partial moment constraint}\label{sec:P-121}
\subsection{Problem formulation and the main results}
In this section, we consider the following moment constraint defined by the 1st-order upper partial moment:
$$\F_{121_+}=\left\{F \in \mathbb{M}(\R_+) : \int_{0}^\infty \mathrm{d}F(x)=1, \int_{0}^\infty x \mathrm{d}F(x)=M_1,\int_{0}^\infty x^2 \mathrm{d}F(x)=M_2,\int_{0}^\infty (x-q)_+\mathrm{d}F(x)=M_+\right\},$$
where $M_1, M_2,$ and $M_+$ are the $1$st moment, $2$nd moment, and $1$st-order upper partial moment, respectively. Given such a constraint, we want to minimize the $2$nd-order upper partial moment given as follows:
$$
\mathbb{V}ar[(X - q)^2_+]= \mathop{\mathbb{E}} [(X - q)^2_+] -M_+^2,\;\mbox{for given}\; q>0.
$$
This problem appears in (EC.4) of \cite{han2014risk}, and it is the key to deriving the analytical results for the risk-averse and ambiguity-averse newsvendor problem in Theorem 1 therein. Adopting the normalization idea given in \cite{han2014risk}, we assume that $q=1$ and $M_2=\gamma M_1^2$ for some $\gamma>0$ without loss of generality. Therefore, the corresponding moment problem is given by
\begin{equation} \label{eq:P-121}
\tag{\sf MP$_{121_{+}}$}
\begin{aligned}
\min_{F \in \F_{121_+}} \,\,\,\, &  \int_{0}^\infty (x-1)_+^2 \mathrm{d}F(x) -M_+^2& \\
\end{aligned}
\end{equation}
and the associated dual formulation is given by
\begin{equation} \label{eq:D-121}
\tag{\sf DMP$_{121_{+}}$}
\begin{aligned} 
\max_\bz \,\,\, & &  z_0 +z_1M_1+z_2 \gamma M_1^2+z_3M_+ -M_+^2& \\
\text{subject to} & & z_0+z_1 x+z_2 x^2+z_3(x-1)_+ \leq(x - 1)^2_+, & \,\,\,\text{for all }  x \in \mathbb{R}_+.		
\end{aligned}
\end{equation}
By applying our unified framework, we get the same 
optimal value for problem \eqref{eq:P-121} as that provided in \cite{han2014risk}, in the following theorem. Moreover, our framework is able to discover a degenerate case with infinitely many optimal supporting points (Lemma \ref{lm: u-v-121-2}) and to identify infinitely many optimal solutions (Lemma \ref{lm: p-z-121-2}) that are not mentioned in \cite{han2014risk}.
\begin{theorem} \label{tm:121}
Suppose that $\gamma>1, M_+>0$, and  problem \eqref{eq:P-121} is feasible.
 Then the optimal value of \eqref{eq:P-121} is given by $\inf_{F \in \F_{121_+}}  \int_{0}^\infty (x-1)_+^2 \mathrm{d}F(x) -M_+^2$
 \begin{eqnarray*}
&=& \begin{cases}
     M_1(\gamma M_1 -1)-M_+-M_+^2, & \text{if } \; M_1 > \frac{1}{\gamma}+M_+\\
     \frac{1}{2}\left(2M_+(M_1-1)+M_1\left(M_1(\gamma-1)-\kappa_{M_1,\gamma,M_+}\right)\right), & \text{if } \; M_1 \leq \frac{1}{\gamma}+M_+ \\
    \end{cases},    
    \end{eqnarray*}
where 
\begin{equation}\label{eqn:kappa-121}
\kappa_{M_1,\gamma,M_+}=\sqrt{(\gamma-1)[(\gamma-1)M_1^2+4M_+(M_1-1)-4M_+^2]}.
\end{equation}
\end{theorem}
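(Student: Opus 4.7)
The plan is to follow the three-step unified framework laid out in Section \ref{sec:framework}. Define the dual slack
\[
H(x;\bz) := (x-1)_+^2 - z_0 - z_1 x - z_2 x^2 - z_3 (x-1)_+,
\]
so that dual feasibility in \eqref{eq:D-121} amounts to $H \ge 0$ on $[0,\infty)$. Since $(x-1)_+$ has a single kink at $x=1$, $H$ is piecewise quadratic with differentiable pieces $[0,1)$ and $(1,\infty)$, and a non-negative quadratic can vanish at most once in the interior of each such interval, always tangentially. The candidate support points are therefore drawn from $\{0, 1, u, v\}$, with $u \in (0,1)$ and $v \in (1,\infty)$ denoting potential interior tangent zeros and $0, 1$ the non-differentiable endpoints of the two pieces.

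Step 1 (Lemma \ref{lm: loc-121}) would combine this observation with complementary slackness \eqref{eq:cs} to narrow the optimal support to two canonical scenarios: Scenario A with three points $\{0, 1, v\}$, and Scenario B with two interior tangent points $\{u, v\}$, $0 < u < 1 < v$. In Step 2, for Scenario A the system \eqref{eq:primal} becomes linear in the probabilities once $p_2$ is eliminated between the first- and second-moment equations; the identity $M_+ v = M_1(\gamma M_1 - 1)$ immediately pins down $v$, the probabilities, and the objective, producing the expression stated in case 1. For Scenario B (Lemma \ref{lm: u-v-121}), using $p_2 = M_+/(v-1)$ together with the variance identity $p_1 p_2 (v-u)^2 = (\gamma-1) M_1^2$ to eliminate $u$ and the probabilities reduces the nonlinear system to a single quadratic in $v$,
\[
M_+ (v - M_1)^2 = (\gamma - 1) M_1^2 (v - 1 - M_+),
\]
whose discriminant simplifies to $M_1^2 \kappa_{M_1,\gamma,M_+}^2$; selecting the smaller root and substituting into $M_+(v-1) - M_+^2$ then yields case 2.

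Step 3 (Lemmas \ref{lm: p-z-121} and \ref{lm: p-z-121-2}) would then construct the primal-dual pair in each scenario and verify $p_i \ge 0$ and $H(x;\bz) \ge 0$ globally. A direct calculation shows that the Scenario A probability at $x=1$ vanishes exactly at $M_1 = 1/\gamma + M_+$, which is the threshold at which the three-point structure collapses and Scenario B takes over. The main obstacle I anticipate is dual feasibility in Scenario B: with both tangent zeros strictly interior, I must ensure each quadratic piece of $H$ opens upward (yielding sign conditions such as $z_2 < 0$ on $[0,1)$ and $z_2 < 1$ on $(1,\infty)$) and that the two pieces join continuously at the kink $x=1$ with one-sided slopes of compatible sign. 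A secondary subtlety is the degenerate configuration of Lemma \ref{lm: u-v-121-2}: when a whole quadratic piece of $H$ collapses to the identically-zero quadratic, every distribution supported on that interval becomes optimal, producing the continuum of optima captured in Lemma \ref{lm: p-z-121-2} that was missed by \cite{han2014risk}.
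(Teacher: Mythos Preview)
Your proposal follows the same three-step framework and reaches the correct formulas, but your Step~1 decomposition differs from the paper's in a way worth noting. The paper's Lemma~\ref{lm: loc-121} does \emph{not} narrow the support to $\{0,1,v\}$ versus $\{u,v\}$; instead it classifies according to the sign of $z_2^*$, and the scenario corresponding to your case~1 is the degenerate one $z_2^*=1$, where the right quadratic piece of $H$ is identically zero and the optimal support may be \emph{any} finite subset of $\{0\}\cup[1,\infty)$ (Lemma~\ref{lm: u-v-121-2}). Your specific choice $\{0,1,v\}$ is one member of that family; once you impose $H(1)=0$ together with a tangent zero at $v>1$ on a single quadratic piece, you are forcing that piece to vanish identically, i.e.\ $z_2=1$, so you will rediscover the degeneracy in Step~3 rather than upfront. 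Both routes yield the same dual certificate $\bz^*=(0,-1,1,-1)$ and the same threshold (your observation that $p_1$ vanishes at $M_1=1/\gamma+M_+$ is equivalent to the paper's Cauchy--Schwarz argument that $\gamma(M_1-M_+)\ge 1$). For Scenario~B your quadratic-in-$v$ reduction is correct and matches the paper's explicit formulas in Lemma~\ref{lm: u-v-121}; just be aware that the paper allows $u\ge 0$ (the boundary $u=0$ is exactly $M_1=1/\gamma+M_+$), so your strict inequality $u>0$ leaves that single point to be picked up by continuity or by the degenerate scenario with $p_1=0$.
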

In the rest of this section, we shall only highlight the major flow of
our analysis. Specifically, we present the contents of the key technical lemmas that correspond to the three steps in our framework, but relegate the
proofs to the appendix.
\subsection{Sketch of Proof for Theorem \ref{tm:121}}
We first provide the rough structure of the optimal support that realizes the goal of the first step in our framework.
\begin{lemma}[The rough structure of the optimal support] \label{lm: loc-121}
    Suppose that $\gamma>1$, $M_+>0$, and $\bz^*=(z^*_0,z^*_1,z^*_2,z^*_3)^T$ is an optimal solution of the dual problem \eqref{eq:D-121}. Let $S$ be the support set for the optimal distribution of \eqref{eq:P-121}. Then we have $z_2^* \le 1$ and, moreover,
    \begin{enumerate}[i)]
        \item $S=\{u,v\}$ with $0 \leq u<1<v$ if $z^*_2<0$;
        \item $S=\{0,v\}$ with $v>1$ if $0 \leq z^*_2<1$;
        \item $S \subseteq \{0\} \bigcup [1,\infty)$ if $z^*_2=1$.  
    \end{enumerate}
\end{lemma}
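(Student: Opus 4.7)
The plan is to analyze the dual slack function
\[
H(x;\bz^*) := (x-1)_+^2 - z_0^* - z_1^* x - z_2^* x^2 - z_3^* (x-1)_+,
\]
which is nonnegative on $\R_+$ by dual feasibility and which vanishes on the support of any primal optimum by complementary slackness \eqref{eq:cs}. Because the kink of $(x-1)_+$ sits only at $x=1$, the slack reduces to two quadratic pieces: $H(x;\bz^*) = -z_2^* x^2 - z_1^* x - z_0^*$ on $[0,1]$, and $H(x;\bz^*) = (1-z_2^*) x^2 - (2+z_1^*+z_3^*) x + (1+z_3^*-z_0^*)$ on $[1,\infty)$. Letting $x\to\infty$ in the second piece immediately forces $z_2^*\le 1$, since otherwise the leading coefficient would be negative and dual feasibility would fail.

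For the three cases, I would pin down the zeros of $H$ piece by piece by inspecting the sign of the leading coefficients. On $[1,\infty)$, whenever $z_2^*<1$, the piece is strictly convex and nonnegative, so it admits at most one zero; and because $M_+>0$ rules out the support being contained in $[0,1]$, such a zero $v>1$ must exist. On $[0,1]$ the situation splits: when $z_2^*<0$, the piece is strictly convex, allowing at most one zero $u\in[0,1]$; when $0\le z_2^*<1$, the piece is concave or linear, and a standard argument (an interior zero of a nonnegative strictly concave quadratic would be its maximum, forcing the quadratic to be identically zero, contradicting strict concavity; the linear case $z_2^*=0$ is degenerate but yields the same restriction) confines the zeros to the endpoints $\{0,1\}$. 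When $z_2^*=1$, the same endpoint restriction applies on $[0,1]$, while on $[1,\infty)$ the piece becomes linear and requiring $H\ge 0$ at infinity confines its zeros either to $\{1\}$ or, in a fully degenerate subcase, to all of $[1,\infty)$; in either situation $S \subseteq \{0\}\cup[1,\infty)$, which is precisely case iii).

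The key technical obstacle is excluding the kink point $x=1$ as a support point in cases i) and ii). Here I would exploit the continuity of $H$ at $x=1$: if $H(1)=0$ were combined with $H(v)=0$ for some $v>1$, then on $[1,\infty)$ the strictly convex, nonnegative $H$ would have two distinct zeros, forcing $H<0$ strictly between them and contradicting dual feasibility. This rules out $u=1$ in case i), giving $0\le u<1<v$. The same argument in case ii) forces the $[0,1]$-zero to be $x=0$ rather than $x=1$, yielding exactly $S=\{0,v\}$. Finally, a singleton support is excluded throughout by $\gamma>1$, which forces positive variance and hence at least two support points, so the second point needed in cases i) and ii) is guaranteed to exist. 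Piecing these observations together delivers the three structural conclusions of the lemma.
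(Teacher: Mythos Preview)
Your proposal is correct and follows essentially the same approach as the paper: both arguments analyze the dual slack function piece by piece on $[0,1]$ and $[1,\infty)$, use the sign of the leading quadratic coefficient to determine convexity, invoke $M_+>0$ to force a support point strictly greater than $1$, invoke $\gamma>1$ to rule out a singleton support, and exclude the kink point $x=1$ by the ``two zeros of a nonnegative strictly convex quadratic'' contradiction. The only notable difference is in case iii): the paper additionally exploits dual optimality (maximizing $z_0+z_1M_1+z_3M_+$) to pin down the exact values $z_0^*=0$, $z_1^*=-1$, $z_3^*=-1$ and hence show $H\equiv 0$ on $[1,\infty)$, whereas you stop at the weaker containment $S\subseteq\{0\}\cup[1,\infty)$ that the lemma actually asserts---your route is slightly more economical here.
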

Note that the support obtained in part ii) of Lemma \ref{lm: loc-121} is a special case of i). Our discussion in the remainder of this section will therefore consider only  the expressions of the support described in parts i) and iii).
\subsubsection{Results under the support $S=\{u,v\}$ with $0 \le u<1<v$.} \label{subsec: u-v-121}
We start with the case where the optimal support includes only two points and provide the following analytical form of the support.
\begin{lemma}[Analytical form of the support $S=\{u,v\}$]\label{lm: u-v-121}
	Suppose that $\gamma>1$, $M_+>0$, and the optimal solution of  \eqref{eq:P-121} is a two-point distribution with support $\{u,v\}$ {such that} $0 \leq u<1<v$. Then we must have  $M_1 \leq  M_+ +\frac{1}{\gamma}$, $u = M_1\left(1-\frac{(\gamma-1)M_1+\kappa_{M_1,\gamma,M_+}}{2(1-M_1+M_+)}\right)$, and $v = M_1\left(1+\frac{(\gamma-1)M_1-\kappa_{M_1,\gamma,M_+}}{2M_+}\right)$, {where $\kappa_{M_1,\gamma,M_+}$ is} defined as in \eqref{eqn:kappa-121}.
\end{lemma}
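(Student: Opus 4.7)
The plan is to carry out Step 2 of the framework by solving the nonlinear system \eqref{eq:primal}, \eqref{eq:cs}, \eqref{eq:Deriv} of Theorem \ref{tm: gpm} under the presumed two-point support. I would first instantiate \eqref{eq:primal} as the four equations $p_1+p_2=1$, $p_1u+p_2v=M_1$, $p_1u^2+p_2v^2=\gamma M_1^2$, and $p_2(v-1)=M_+$, where the last uses $(u-1)_+=0$ because $u<1$. Eliminating $p_1,p_2$ via the normalization and first-moment equations and substituting into the remaining two conditions reduces them, after elementary manipulation, to the compact pair
\[
(u-M_1)(v-M_1)=-(\gamma-1)M_1^2, \qquad (M_1-u)(v-1)=M_+(v-u).
\]

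The second-moment relation in its symmetric form is the organizing device of the proof. Substituting $a=u-M_1$ and $b=v-M_1$ converts the system into $ab=-(\gamma-1)M_1^2$ together with $a(1-M_1+M_+-b)=M_+ b$. Eliminating $b$ from the latter via the former yields a quadratic
\[
(1-M_1+M_+)a^2+(\gamma-1)M_1^2\,a+(\gamma-1)M_1^2 M_+=0,
\]
whose discriminant, after a routine computation, collapses to $M_1^2\kappa_{M_1,\gamma,M_+}^2$. A symmetric manipulation produces the companion quadratic in $b$ with the same discriminant, and these two quadratics together yield two candidate pairs $(u,v)$ that satisfy the primal conditions, linked through the relation $ab=-(\gamma-1)M_1^2$.

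To isolate the correct pair, I would solve the slack \eqref{eq:cs} and tangent \eqref{eq:Deriv} conditions at $u$ and $v$ for the dual vector $\bz^*$; a short elimination gives the explicit formula $z_2^*=(v-1)^2/[(v-u)(u+v-2)]$. By Lemma \ref{lm: loc-121}~i) the present scenario forces $z_2^*\le 0$, equivalently $u+v\le 2$, and this constraint uniquely selects the pair with smaller $v$ and correspondingly smaller $u$, producing the claimed expressions with $+\kappa_{M_1,\gamma,M_+}$ in the $u$-formula and $-\kappa_{M_1,\gamma,M_+}$ in the $v$-formula. Finally, the requirement $u\ge 0$ rewrites as $(\gamma-1)M_1+\kappa_{M_1,\gamma,M_+}\le 2(1-M_1+M_+)$; after verifying that the right-hand side is nonnegative and squaring, the identity $[2(1+M_+)-(1+\gamma)M_1]^2-\kappa_{M_1,\gamma,M_+}^2=4(1-M_1+M_+)(1-\gamma(M_1-M_+))$ reduces this precisely to $\gamma(M_1-M_+)\le 1$, i.e.\ $M_1\le M_++1/\gamma$.

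The main obstacle will be the algebraic bookkeeping: confirming that both discriminants simplify exactly to $M_1^2\kappa_{M_1,\gamma,M_+}^2$, and tracking signs carefully enough to justify the squaring step and to discard the spurious root pair that is primal-feasible but fails the dual sign requirement. The symmetric substitution $a=u-M_1$, $b=v-M_1$ and the explicit dual-side identity $z_2^*=(v-1)^2/[(v-u)(u+v-2)]$ together render the elimination, discriminant computation, and root selection all transparent.
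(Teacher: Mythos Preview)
Your approach is correct and, on the root-selection issue, actually more careful than the paper's own proof. The paper works \emph{only} with the primal system \eqref{eq:primal}: it writes down the four moment equations for $(u,v,p_1^*,p_2^*)$, states the solution in closed form, verifies that $\kappa_{M_1,\gamma,M_+}^2\ge 0$ by the nice probabilistic identity $\kappa^2/(\gamma-1)=2p_1^*p_2^*\bigl((u-1)^2+(v-1)^2\bigr)$, and then derives $M_1\le M_++1/\gamma$ from $u\ge 0$ via the same squared-difference identity you found (the paper's version drops a factor~$4$). It never explains why the stated root pair, rather than the companion pair $(u_2,v_2)$ with the opposite signs on $\kappa$, is the one realized by the \emph{optimal} distribution. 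Your use of \eqref{eq:cs}--\eqref{eq:Deriv} to pin down $z_2^*=(v-1)^2/[(v-u)(u+v-2)]$ and then invoke Lemma~\ref{lm: loc-121} to force $u+v<2$ fills that gap.

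Two points to tighten. First, your formula for $z_2^*$ uses the tangent condition at $u$, which is only guaranteed when $u>0$ is interior; at the boundary $u=0$ (Lemma~\ref{lm: loc-121}\,ii)) that equation is not part of \eqref{eq:Deriv}. Second, ``$u+v\le 2$ uniquely selects the smaller-$v$ pair'' still needs the check that the other pair has $u_2+v_2>2$. A cleaner route that avoids both issues: for any two-point distribution with $u<1<v$ the objective equals $p_2(v-1)^2-M_+^2=M_+(v-1)-M_+^2$ (using $p_2(v-1)=M_+$), which is strictly increasing in $v$; hence the optimal two-point support is necessarily the pair with the smaller $v$, i.e.\ the one with $-\kappa$ in the $v$-formula.
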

Given the analytical form in Lemma \ref{lm: u-v-121}, we can solve and verify the optimal distribution in this case.
\begin{lemma}[Analytical form of the optimal distribution with support $S=\{u,v\}$] \label{lm: p-z-121}
Suppose that $\gamma>1, M_+>0$, $M_1 \leq \frac{1}{\gamma}+M_+$, and  problem \eqref{eq:P-121} is feasible. Then the optimal distribution for problem \eqref{eq:P-121} can be characterized as
\begin{equation}\label{OptDis:uv-121}
X^*= \begin{cases}
M_1\left(1-\frac{(\gamma-1)M_1+\kappa_{M_1,\gamma,M_+}}{2(1-M_1+M_+)}\right), & w.p.  \;1-\frac{M_1\left(2M_+(\gamma-1)M_1+\kappa_{M_1,\gamma,M_+}\right)-2M_+}{2(1-2M_1+\gamma M_1^2)}\\
M_1\left(1+\frac{(\gamma-1)M_1-\kappa_{M_1,\gamma,M_+}}{2M_+}\right), & w.p. \;\frac{M_1\left(2M_+(\gamma-1)M_1+\kappa_{M_1,\gamma,M_+}\right)-2M_+}{2(1-2M_1+\gamma M_1^2)}\\
\end{cases}     
\end{equation}
with the optimal value 
$$\inf_{F \in \F_{121_+}}  \mathbb{V}ar[(X - q)_+]=\frac{1}{2}\Big(2M_+(M_1-1)+M_1\left((\gamma-1)M_1-\kappa_{M_1,\gamma,M_+}\right)\Big)-M_+^2,$$
where $\kappa_{M_1,\gamma,M_+}$ is defined as in \eqref{eqn:kappa-121}.
\end{lemma}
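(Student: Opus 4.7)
The approach is to follow the template used in the proofs of Lemmas \ref{lm:sm q-b} and \ref{lm:lg q-b}: explicitly construct a primal-dual pair $(\bp^*, \bz^*)$ from the supporting points $u, v$ identified in Lemma \ref{lm: u-v-121}, verify primal and dual feasibility, and then invoke Theorem \ref{tm: gpm} to conclude optimality.

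First, with $u, v$ as in Lemma \ref{lm: u-v-121} (so $0 \le u < 1 < v$), the probabilities $\bp^* = (p_1^*, p_2^*)^T$ are uniquely determined by the first two primal-condition equations $p_1^* + p_2^* = 1$ and $u p_1^* + v p_2^* = M_1$, which yield $p_1^* = (v - M_1)/(v - u)$ and $p_2^* = (M_1 - u)/(v - u)$. Substituting the closed-form expressions for $u, v$ gives the probabilities in \eqref{OptDis:uv-121}; the remaining moment constraints are automatically satisfied, since $u, v$ were derived in Lemma \ref{lm: u-v-121} from the full primal condition \eqref{eq:primal}. The dual vector $\bz^* = (z_0^*, z_1^*, z_2^*, z_3^*)^T$ is then recovered by solving the linear system consisting of the complementary slackness conditions $H(u; \bz^*) = H(v; \bz^*) = 0$ and the tangent conditions $H'(u; \bz^*) = 0$ (when $u > 0$) and $H'(v; \bz^*) = 0$, where $H(x; \bz) := (x-1)_+^2 - z_0 - z_1 x - z_2 x^2 - z_3 (x-1)_+$. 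Primal feasibility reduces to showing $u \le M_1 \le v$, which follows from a sign analysis of $(\gamma - 1)M_1 \pm \kappa_{M_1,\gamma,M_+}$; the assumption $M_1 \le 1/\gamma + M_+$ ensures that the radicand in $\kappa_{M_1,\gamma,M_+}$ is nonnegative so the expressions are well defined.

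The main obstacle is dual feasibility, i.e., $H(x; \bz^*) \ge 0$ on $\R_+$. The key observation is that $H$ is a quadratic on each of $[0, 1)$ and $[1, \infty)$, with leading coefficients $-z_2^*$ and $1 - z_2^*$ respectively. When $u > 0$, the complementary slackness and tangent conditions force a double root at $u$ on the first piece and at $v$ on the second, so $H(x; \bz^*) = -z_2^* (x - u)^2$ on $[0, 1)$ and $H(x; \bz^*) = (1 - z_2^*)(x - v)^2$ on $[1, \infty)$. Dual feasibility thus reduces to the two sign conditions $z_2^* \le 0$ and $z_2^* \le 1$. The latter is given by Lemma \ref{lm: loc-121}, while the former is the main algebraic hurdle: it must be established by solving the linear system for $z_2^*$ explicitly and simplifying using the hypothesis $M_1 \le 1/\gamma + M_+$. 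The boundary subcase $u = 0$ requires a minor modification since the tangent condition at $u$ is inapplicable; there one uses $H(0; \bz^*) = 0$ to obtain $z_0^* = 0$ and argues nonnegativity of $H(x; \bz^*) = -x(z_1^* + z_2^* x)$ on $[0, 1)$ by sign analysis of $z_1^*$ and $z_1^* + z_2^*$, which is consistent with case (ii) of Lemma \ref{lm: loc-121}. Once both feasibilities are confirmed, Theorem \ref{tm: gpm} yields optimality of $(\bp^*, \bz^*)$, and the stated optimal value follows by evaluating $p_2^* (v - 1)^2 - M_+^2$ and simplifying using the closed forms of $u, v, p_2^*$ together with the identity defining $\kappa_{M_1,\gamma,M_+}$.
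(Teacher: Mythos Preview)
Your approach is essentially the paper's: construct the primal--dual pair from the $u,v$ formulas and the linear system \eqref{eq:cs}--\eqref{eq:Deriv}, factor $H(\cdot;\bz^*)$ as a perfect square on each piece, reduce dual feasibility to the sign of $z_2^*$, and finish by complementary slackness. The paper carries out exactly this program, including the explicit computation
\[
z_2^* \;=\; -\frac{1}{4}\,\frac{\bigl(\kappa_{M_1,\gamma,M_+}-(\gamma-1)M_1\bigr)^2}{(\gamma-1)\,M_1\,\kappa_{M_1,\gamma,M_+}} \;<\; 0,
\]
which settles your ``main algebraic hurdle'' in one line.

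Two small logical slips to fix. First, invoking Lemma~\ref{lm: loc-121} for $z_2^*\le 1$ is circular: that lemma is a statement about the \emph{optimal} dual solution, whereas here you are still in the process of verifying that your constructed $\bz^*$ is feasible. Happily the appeal is redundant, since $z_2^*\le 0$ already gives $z_2^*\le 1$. Second, you write ``with $u,v$ as in Lemma~\ref{lm: u-v-121} (so $0\le u<1<v$)'', but those inequalities are \emph{hypotheses} of Lemma~\ref{lm: u-v-121}, not conclusions; in the present lemma you must re-establish them from $M_1\le \tfrac{1}{\gamma}+M_+$ and feasibility alone. The paper does this using $M_+>M_1-1$ together with the feasibility bound $M_1\le \tfrac{2}{\gamma}$ (borrowed from \cite{han2014risk}, Lemma~EC.1). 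In particular, $u\ge 0$ is needed so that the support lies in $\R_+$, and $u<1<v$ is needed so that the double-root factorizations occur on the correct pieces. Your separate treatment of the boundary case $u=0$ is unnecessary: the paper's uniform argument (which still imposes the tangent equation at $u$) already yields a feasible $\bz^*$ with $H(x;\bz^*)=z_2^*(x-u)^2\le 0$ on $[0,1)$ when $u=0$.
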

\subsubsection{Results under the support $S\subseteq \{0\}\bigcup [1,+\infty)$.}
Now we consider the degenerate case in which the optimal support includes infinitely many points and provide the following analytical form for the three-point optimal support.
\begin{lemma}[Analytical form of the support $S\subseteq \{0\}\bigcup [1,+\infty)$]\label{lm: u-v-121-2}
	Suppose that $\gamma>1$, $M_+>0$, and the optimal solution of  \eqref{eq:P-121} has the support $S\subseteq \{0\}\bigcup [1,+\infty)$. Then we have $M_1> \frac{1}{\gamma}+ M_+$, and any feasible support $\{0,v_1,v_2,...,v_d\}$ with $v_i \ge 1$ for all $1 \le i \le d$ has the same objective value and thus is an optimal support. Moreover, any three-point optimal support has the form  $\left\{0,v_1,\frac{M_1v_1 -\gamma M_1^2}{(M_1-M_+)v_1 -M_1}\right\}$, with
	$ v_1 \ge \max\left\{1, \frac{\gamma M_1^2-M_1}{M_+}\right\}$.
\end{lemma}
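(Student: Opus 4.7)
The plan hinges on a key observation: on the candidate support $\{0\}\cup[1,+\infty)$, the piecewise functions collapse to polynomials, since $(x-1)_+=x-1$ and $(x-1)_+^2=(x-1)^2$ for $x\ge 1$ while both vanish at $x=0$. Thus, for any feasible distribution with support in this set,
\[
M_+ \,=\, \mathbb{E}[(X-1)_+] \,=\, \mathbb{E}[X]-\mathbb{P}(X\ge 1) \,=\, M_1-\mathbb{P}(X\ge 1),
\]
which pins down $\mathbb{P}(X\ge 1) = M_1-M_+$, and consequently
\[
\mathbb{E}[(X-1)_+^2]-M_+^2 \,=\, \gamma M_1^2-2M_1+(M_1-M_+)-M_+^2 \,=\, \gamma M_1^2-M_1-M_+-M_+^2.
\]
Because this value depends only on the given moments, every feasible distribution whose support lies in $\{0\}\cup[1,+\infty)$ attains the same objective value, which proves the middle assertion of the lemma.

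To obtain the bound $M_1>1/\gamma+M_+$, I would examine the conditional distribution of $X$ on $[1,+\infty)$ given $X\ge 1$: it must have mean $\mu=M_1/(M_1-M_+)$ and second moment $\sigma^2=\gamma M_1^2/(M_1-M_+)$. The Cauchy--Schwarz inequality $\sigma^2\ge \mu^2$ yields $\gamma(M_1-M_+)\ge 1$, i.e.\ $M_1\ge 1/\gamma+M_+$. The equality case forces the conditional distribution to collapse to a Dirac at $\gamma M_1$, producing the two-point support $\{0,\gamma M_1\}$ that is already captured by Lemma~\ref{lm: u-v-121} with $u=0$ and $v=\gamma M_1>1$. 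Hence the present regime corresponds to the strict inequality.

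For the three-point characterization, I would substitute a generic support $\{0,v_1,v_2\}$ (with $v_1,v_2\ge 1$ and $v_1\ne v_2$) into \eqref{eq:primal}. Using the identity $p_1+p_2=M_1-M_+$ already established, the remaining constraints $v_1 p_1+v_2 p_2=M_1$ and $v_1^2 p_1+v_2^2 p_2=\gamma M_1^2$ force the bilinear consistency equation
\[
M_1(v_1+v_2)-\gamma M_1^2 \,=\, (M_1-M_+)\,v_1 v_2,
\]
which solves to $v_2=(M_1 v_1-\gamma M_1^2)/((M_1-M_+)v_1-M_1)$, as claimed. To identify the range of $v_1$, I would show that the requirement $v_2\ge 1$ reduces to $v_1\ge(\gamma M_1^2-M_1)/M_+$ once the denominator is shown to be positive there (a short calculation using $M_1-M_+>1/\gamma$), and then combine with $v_1\ge 1$ to get the stated range.

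The most delicate part will be the range analysis in the last step, because the map $v_1\mapsto v_2$ is a M\"obius involution with a pole at $v_1=M_1/(M_1-M_+)$; I would need to verify monotonicity on the correct branch and confirm that the probabilities $p_0,p_1,p_2$ remain nonnegative throughout $v_1\ge \max\{1,(\gamma M_1^2-M_1)/M_+\}$. The remaining steps are essentially algebraic manipulations and fit neatly into Step~2 of the unified framework.
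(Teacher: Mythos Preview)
Your proposal is correct and follows essentially the same route as the paper's proof: both collapse the objective on $\{0\}\cup[1,+\infty)$ to the constant $\gamma M_1^2-M_1-M_+-M_+^2$, both invoke Cauchy--Schwarz (you phrase it via the conditional distribution on $[1,+\infty)$, the paper writes it as $(\sum p_i v_i^2)(\sum p_i)\ge(\sum p_i v_i)^2$) to obtain $\gamma(M_1-M_+)\ge 1$ with equality only in the degenerate two-point case, and both solve the same linear system \eqref{eq:primal} for the three-point support. The only cosmetic difference is that the paper records the matrix system and its solution directly, whereas you derive the bilinear identity $M_1(v_1+v_2)-\gamma M_1^2=(M_1-M_+)v_1v_2$ before solving for $v_2$; the paper is equally terse about the range analysis you flag as delicate.
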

The analytical form in the preceding lemma enables us to find the optimal distribution in the following lemma.
\begin{lemma}[Analytical form of the optimal distribution with support $S\subseteq \{0\}\bigcup [1,+\infty)$] \label{lm: p-z-121-2}
	Suppose that $\gamma>1$,  $M_+>0$, $ M_1 >  \frac{1}{\gamma}+M_+$, and problem \eqref{eq:P-121} is feasible. Then an optimal three-point distribution of problem \eqref{eq:P-121}  can be characterized as 
	\begin{equation}\label{OptDis:0v2-121}
	X^*= \begin{cases}
	0, & w.p.   \; 1-M_1+M_+,\\
	v_1, & w.p. \; \frac{M_1^2(\gamma M_1-\gamma M_+ -1)}{\gamma M_1^2 -2M_1 v_1+M_1v_1^2-M_+v_1^2},\\
	\frac{M_1v_1-\gamma M_1^2}{(M_1-M_+)v_1-M_1}, & w.p. \; \frac{(M_1v_1-M_+v_1-M_1)^2}{\gamma M_1^2 -2M_1 v_1+M_1v_1^2-M_+v_1^2}
	\end{cases}       
	\end{equation} 
	for any $v_1 \ge \max\left\{1, \frac{\gamma M_1^2-M_1}{M_+}\right\}$,
	and the optimal value is given by
	$\inf_{F \in \F_{121_+}}  \mathbb{V}ar[(X - q)_+]=M_1(\gamma M_1-1)-M_+-M_+^2$.
\end{lemma}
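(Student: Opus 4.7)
The plan is to exhibit an explicit primal--dual pair and invoke Theorem \ref{tm: gpm}. For the dual, Lemma \ref{lm: loc-121}(iii) forces $z_2^* = 1$, and since the candidate three-point support contains two distinct points $v_1, v_2$ in the interior of the differentiable piece $(1, \infty)$, complementary slackness together with the tangent condition at $v_1, v_2$ forces $H(x; \bz^*) := (x-1)_+^2 - z_0 - z_1 x - x^2 - z_3(x-1)_+$ to vanish identically on $[1, \infty)$. On this ray $H$ is affine in $x$ (the quadratic terms cancel when $z_2^* = 1$), so requiring both its slope and constant to be zero, combined with $H(0) = -z_0^* = 0$, uniquely determines $\bz^* = (0, -1, 1, -1)^T$. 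Dual feasibility $H(\cdot; \bz^*) \ge 0$ on $\mathbb{R}_+$ then reduces to the trivial identity $x(1-x) \ge 0$ on $[0, 1)$, and the dual objective evaluates to $-M_1 + \gamma M_1^2 - M_+ - M_+^2 = M_1(\gamma M_1 - 1) - M_+ - M_+^2$, which matches the claimed optimal value.

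For the primal, fix any admissible $v_1$ and set $v_2 = (M_1 v_1 - \gamma M_1^2)/((M_1 - M_+) v_1 - M_1)$ as dictated by Lemma \ref{lm: u-v-121-2}. Subtracting the partial-moment constraint from the mean constraint immediately yields $p_{v_1} + p_{v_2} = M_1 - M_+$, whence $p_0 = 1 - M_1 + M_+$; the mean constraint alone then pins down $p_{v_1}$ and $p_{v_2}$. A short simplification shows the common factor $v_1 - v_2$ in these expressions equals $D(v_1)/((M_1 - M_+) v_1 - M_1)$, where $D(v_1) := \gamma M_1^2 - 2 M_1 v_1 + (M_1 - M_+) v_1^2$, reproducing exactly the formulas in \eqref{OptDis:0v2-121}. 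The second-moment equation is then automatic, since the very definition of $v_2$ in Lemma \ref{lm: u-v-121-2} was obtained by imposing it.

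The main obstacle is verifying primal nonnegativity uniformly in $v_1$, which is the essence of the ``infinitely many optimal distributions'' phenomenon. The numerator $M_1^2(\gamma M_1 - \gamma M_+ - 1)$ of $p_{v_1}$ is strictly positive precisely under the standing hypothesis $M_1 > 1/\gamma + M_+$, while the numerator of $p_{v_2}$ is a perfect square. The critical step is to show $D(v_1) > 0$ for all $v_1$: viewing $D$ as a quadratic in $v_1$ with leading coefficient $M_1 - M_+ > 1/\gamma > 0$, its discriminant equals $4 M_1^2 [\,1 - \gamma(M_1 - M_+)\,]$, which is strictly negative under the same hypothesis, so $D$ is strictly positive on all of $\mathbb{R}$. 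Feasibility of the problem gives $p_0 \ge 0$, and a short estimate using $M_1 > 1/\gamma + M_+$ shows that $v_1 \ge (\gamma M_1^2 - M_1)/M_+$ implies both $(M_1 - M_+) v_1 - M_1 > 0$ and $v_2 \ge 1$. Hence the support lies entirely in $\{0\} \cup [1, \infty)$, where $H(\cdot; \bz^*) \equiv 0$, so complementary slackness is automatic; Theorem \ref{tm: gpm} then yields the optimality claim and the stated optimal value.
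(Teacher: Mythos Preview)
Your proposal is correct and follows essentially the same approach as the paper: exhibit the explicit primal--dual pair $\bz^*=(0,-1,1,-1)^T$ and the three-point distribution, then verify the conditions of Theorem~\ref{tm: gpm}. The only notable variation is in the argument for $p_{v_1}^*,p_{v_2}^*>0$: the paper completes the square to rewrite $p_1^*$ as $\Big(\tfrac{M_1-M_+}{M_1^2(\gamma M_1-\gamma M_+-1)}\big(v_1-\tfrac{M_1}{M_1-M_+}\big)^2+\tfrac{1}{M_1-M_+}\Big)^{-1}$, which simultaneously gives $0<p_1^*<M_1-M_+$, whereas you isolate the common denominator $D(v_1)=\gamma M_1^2-2M_1v_1+(M_1-M_+)v_1^2$ and show it is strictly positive via its negative discriminant $4M_1^2\bigl[1-\gamma(M_1-M_+)\bigr]<0$. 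These are equivalent computations, and your discriminant route is arguably the more transparent of the two.
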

\subsubsection{Proof of Theorem \ref{tm:121}.}
The goals in Step $2$ and Step $3$ of our framework have been achieved under the two scenarios, and
Theorem \ref{tm:121} readily follows from Lemmas \ref{lm: p-z-121} and \ref{lm: p-z-121-2}.
\section{The $1$st and exponential moment problem in the newsvendor model}\label{sec:P-1et}
\subsection{Problem formulation and main results}
In this section, we further demonstrate the capability of our framework by solving a problem with
an exponential moment constraint that cannot be handled by the SDP or by the relative entropy approach in \cite{das2021heavy}. Moreover, as it will be shown in a numerical experiment, the model with  this moment information is prone to capture the features of the light tailed distribution. Specifically,
the moment constraint that we consider is given by
$$\F_{1e}=\left\{F \in \mathbb{M}(\R_+) : \int_{0}^\infty \mathrm{d}F(x)=1, \int_{0}^\infty x \mathrm{d}F(x)=M_1,\int_{0}^\infty e^{tx} \mathrm{d}F(x)=M_e\right\},$$
where $t>0$ is fixed, and $M_1$ and $M_e$ are the $1$st and exponential moment parameters satisfying $M_e \geq e^{tM_1}$ such that $\F_{1e}$ is not empty. Moreover, as $M_e =e^{tM_1}$ only holds for single-point distributions, we assume that $M_e > e^{tM_1}$ to exclude this trivial case. The moment problem that we consider is thus
\begin{equation} \label{eq:P-1et}
\max_{F \in \F_{1e}} \,\, \Ex_{F}[(X - q)_+],
\tag{\sf MP$_{1e}$}
\end{equation}
where $q$ is the order quantity in the newsvendor model. The dual of \eqref{eq:P-1et} is
\begin{equation} \label{eq:D-1et}
\tag{\sf DMP$_{1e}$}
\begin{aligned}
\min_z &&   z_0 +z_1M_1+z_e M_e \hspace{2cm} & \\
\text{subject to} && z_0+z_1 x+z_e e^{tx} \geq (x - q)_+,& \,\,\,\text{for all }  x \in \mathbb{R}_+.		
\end{aligned}
\end{equation}
Without loss of generality, in the rest of this section we assume that $t=1$. This is because when $t\neq 1$, we can construct $\hat X=tX$ such that $$\hat M_1=\Ex [\hat X]=t\Ex[X]=t M_1,\quad \hat M_e=\Ex [e^{\hat X}]=\Ex[e^{tX}]=M_e, \; \mbox{and}\quad  \hat q=tq,$$ and work on \eqref{eq:P-1et} with $t=1$ by 
replacing $X$ with $\hat X$.
Our unified framework allows us to arrive at the following main theorem of this section, which provides a characterization of the optimal value for problem \eqref{eq:P-1et}.
\begin{theorem} \label{tm:1et}
Suppose that $M_e>e^{M_1}$, and define 
\begin{equation}\label{eqn-v1-1et}
  v_1=-W_{-1}\left(\frac{-M_1}{M_e-1}e^{-\frac{M_1}{M_e-1}}\right)-\frac{M_1}{M_e-1}
\end{equation}
with the Lambert W function $W_{-1}(\cdot)$.  The optimal value
for problem \eqref{eq:P-1et}
is given by
\begin{equation}\label{ambiguity-1-exp}
\max_{F \in \F_{1e}} \Ex_{F}\left[\left(X - q\right)_+\right]= \begin{cases}
      M_1\left(1-\frac{q}{v_1}\right), & \text{if } \;q \leq v_1+\frac{M_1}{M_e-1}-1\\
     \frac{(v_2-q)(M_1-u)}{v_2-u}, & \text{if } \; q > v_1+\frac{M_1}{M_e-1}-1\\
    \end{cases},       
\end{equation}  
where $v_2=q+1-e^{u}\frac{M_1-u}{M_e-e^{u}}$ and $u \in \left(0,\min\left\{M_1,q\right\}\right)$ is any root of the equation
\begin{equation}\label{eqn-v-1et}
\Phi(y):=\frac{M_e-e^y}{M_1-y}(q+1-M_1)-e^y-{e^{q+1-e^{y}\frac{M_1-y}{M_e-e^{y}}}+M_e}=0.
\end{equation}
\end{theorem}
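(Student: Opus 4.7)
The plan is to apply the three-step unified framework from Section \ref{sec:framework}, following the same blueprint that Section \ref{Sec:1t} uses for the $1$st and $t$-th moment problem. Throughout, the dual slack function will be $H(x;\bz) = z_0 + z_1 x + z_e e^x - (x-q)_+$.

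For Step 1 (rough support structure), I will first argue that $z_e^* > 0$ at optimum: $z_e^* < 0$ forces $H(x;\bz^*) \to -\infty$, contradicting dual feasibility, and $z_e^* = 0$ collapses $H$ to a linear-plus-hinge function, which by the same reasoning as in Lemma \ref{lm:loc-1t} would require a single-point distribution, ruled out by $M_e > e^{M_1}$. With $z_e^* > 0$, the function $z_0 + z_1 x + z_e e^x$ is strictly convex with strictly increasing derivative, so Lemma \ref{lemma:two-root} (applied with $h_1(x)=z_0+z_1 x+z_e e^x$ and $h_2(x)=(x-q)_+$) yields at most one support point in $[0,q]$ and at most one in $[q,\infty)$. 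Since $M_e > e^{M_1}$ excludes a singleton support, I conclude that the optimal support is $\{u,v\}$ with $0 \le u < q < v$, splitting into the boundary case $u = 0$ and the interior case $u > 0$.

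For Step 2 (semi-analytical form of the support), I will solve the combined primal-slackness-tangent system in each scenario. In the boundary case $u=0$, the tangent condition applies only at $v$, and elimination of $(p_1,p_2)$ from the primal reduces everything to $(e^{v_1}-1)/v_1 = (M_e-1)/M_1$; a substitution of the form $\tau e^{-\tau}=\mathrm{const}$ recasts this as a Lambert-$W$ equation whose nontrivial branch $W_{-1}$ yields the closed form \eqref{eqn-v1-1et}. In the interior case $0<u<q<v$, the two tangent conditions $H'(u)=0$ and $H'(v_2)=0$ together with the two slackness conditions give a linear system in $(z_0,z_1,z_e)$ whose consistency condition is $e^{v_2}(v_2-(q+1)) = e^u(u-(q+1))$; combined with the primal consistency $(M_1-u)(e^{v_2}-e^u) = (v_2-u)(M_e-e^u)$ and cancelling $e^{v_2}$ yields the explicit representation $v_2 = q+1 - e^u(M_1-u)/(M_e-e^u)$, and substituting back produces the single scalar equation $\Phi(u)=0$ in \eqref{eqn-v-1et}.

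For Step 3 (verification), I will construct the candidate primal-dual pair from the support obtained in Step 2 and check the hypotheses of Theorem \ref{tm: gpm}. In Case 1, $p_1,p_2\in(0,1)$ is immediate, dual feasibility on $[q,\infty)$ follows from convexity of $H$ together with the double root $H(v_1)=H'(v_1)=0$, and on $[0,q]$ the convex $H$ with $H(0)=0$ is nonnegative iff $H'(0)\ge 0$; a direct computation rewrites this as $q \le v_1 + M_1/(M_e-1) - 1$, producing precisely the case split in \eqref{ambiguity-1-exp}. In Case 2, a sign analysis of $\Phi$ at the endpoints of $(0,\min\{M_1,q\})$, analogous to Proposition \ref{prop:theta}, yields a root $u$ for which the induced $v_2$ satisfies $u<M_1<v_2$; primal nonnegativity and dual feasibility then follow from piecewise convexity of $H$ combined with the double roots at $u$ and $v_2$. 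The optimal value reduces to $p_2(v-q)$ in each case, matching the two branches of \eqref{ambiguity-1-exp}.

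The hard part will be the interior case of Step 2: the two equations in $(u,v_2)$ are coupled through $e^u$ and $e^{v_2}$ in a transcendental way, so the algebraic step that produces the closed form $v_2=q+1 - e^u(M_1-u)/(M_e-e^u)$ and distills the system into a single equation $\Phi(u)=0$ will be delicate. A secondary but nontrivial challenge will be proving existence of a root of $\Phi$ inside $(0,\min\{M_1,q\})$ with the correct ordering $u<q<v_2$; unlike Section \ref{Sec:1t}, there is no polynomial structure to exploit, so this must be done through a careful sign and monotonicity analysis of $\Phi$.
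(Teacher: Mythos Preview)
Your proposal is correct and follows essentially the same approach as the paper's own proof, which is organized into Lemmas \ref{lm:loc-1et}, \ref{lm:exp-0v}, \ref{lm:exp 0v-2}, \ref{lm:exp uv-1}, \ref{lm:exp uv-2} and Proposition \ref{prop:phi} in exact correspondence with your three steps and two cases. The algebraic reduction you flag as delicate---eliminating $e^{v_2}$ to obtain $v_2=q+1-e^u(M_1-u)/(M_e-e^u)$---is done in the paper by rewriting the tangent-consistency condition as $(v_2e^{v_2}-ue^u)/(e^{v_2}-e^u)=q+1$, isolating $q+1-v_2=e^u(v_2-u)/(e^{v_2}-e^u)$, and then substituting the primal-consistency ratio $(v_2-u)/(e^{v_2}-e^u)=(M_1-u)/(M_e-e^u)$; your endpoint sign analysis of $\Phi$ is likewise exactly Proposition \ref{prop:phi}.
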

The above theorem states that the problem \eqref{eq:P-1et} has a closed-form optimal value when $q \leq v_1+\frac{M_1}{M_e-1}-1$, while in the opposite case where $q > v_1+\frac{M_1}{M_e-1}-1$, the following proposition states that Algorithm \ref{Alg: bm} can find a root of the function $\Phi(\cdot)$ defined in \eqref{eqn-v-1et}, and thus solve problem \eqref{eq:P-1et}  efficiently.
\begin{proposition}\label{prop:phi}
Suppose that the function $\Phi(\cdot)$ is defined as in \eqref{eqn-v-1et} and that $q > v_1+\frac{M_1}{M_e-1}-1$, with $v_1$ defined as in \eqref{eqn-v1-1et}. Then 
\begin{enumerate}[i)]
	\item $\Phi(0)<0$,
	\item $\Phi(q)>0$ when $M_1 > q$,
	\item $\lim_{y\uparrow M_1}\Phi(y)=+\infty>0$ when $M_1 \leq q$,
\end{enumerate}
and the bisection method in Algorithm \ref{Alg: bm}
can find a root $u \in \left(0, \min\{M_1,q\}\right)$ of  equation  \eqref{eqn-v-1et}  within $\log \left(\frac{\min\{q,M_1\}}{\epsilon }\right)$ iterations for a given precision $\epsilon$.
\end{proposition}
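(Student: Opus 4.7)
The plan is to verify the three sign/limit claims about $\Phi$ separately, and then deduce the bisection convergence from continuity, the intermediate value theorem, and the standard halving argument. All three claims reduce, after some algebra, to strict versions of the tangent-line inequality $e^x > 1 + x$ for $x \ne 0$; the Lambert $W$ definition of $v_1$ enters only through a single algebraic identity it yields.

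For part (i), I would first exploit the defining equation $we^w = -re^{-r}$ with $r := M_1/(M_e-1)$ and $w = W_{-1}(-re^{-r})$: substituting $w = -v_1 - r$ produces the clean identity $e^{v_1} = 1 + (M_e-1)v_1/M_1$. Plugging $y=0$ into $\Phi$, writing $s := q + 1 - r$, and collecting terms collapses $\Phi(0)$ to $1 + (M_e-1)s/M_1 - e^s = -g(s)$, where $g(s) := e^s - 1 - (M_e-1)s/M_1$. The function $g$ is strictly convex, $g(0)=0$, and the identity above gives $g(v_1)=0$. Since the proposition's hypothesis $q > v_1 + M_1/(M_e-1) - 1$ is exactly $s > v_1$, strict convexity forces $g(s) > 0$, whence $\Phi(0) < 0$.

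For part (ii), introduce the shorthand $t := e^q(M_1-q)/(M_e - e^q) > 0$ and use $(M_e - e^q)/(M_1 - q) = e^q/t$ to simplify; after cancellation one obtains $\Phi(q) = (e^q/t)(1 - t e^{1-t})$. Thus $\Phi(q) > 0$ iff $p(t) := e^{t-1} - t > 0$, and strict convexity of $p$ with $p(1)=p'(1)=0$ gives $p(t) > 0$ for all $t \ne 1$. It remains to rule out $t = 1$; in fact $t < 1$ follows from $M_e - e^q > e^{M_1} - e^q$ (using $M_e > e^{M_1}$) combined with the strict tangent-line inequality $e^{M_1-q} > 1 + (M_1-q)$, valid whenever $M_1 > q$. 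For part (iii), as $y \uparrow M_1$ the ratio $e^y(M_1-y)/(M_e-e^y)$ tends to $0$ (numerator $\to 0^+$, denominator bounded away from $0$ since $M_e > e^{M_1}$), so the exponential term $e^{q+1-\cdot}$ tends to the finite value $e^{q+1}$. Meanwhile $(M_e-e^y)/(M_1-y) \to +\infty$, and the coefficient $q+1-M_1 \ge 1$ is bounded below by $1$ under the assumption $M_1 \le q$. Hence $\Phi(y) \to +\infty$.

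Given parts (i)--(iii) and the continuity of $\Phi$ on $(0, \min\{M_1,q\})$ --- which holds since both $M_1-y$ and $M_e-e^y$ remain strictly positive on this range --- the intermediate value theorem produces a root in $(0, \min\{M_1,q\})$. The claimed $\log(\min\{q,M_1\}/\epsilon)$ iteration bound is then the standard bisection complexity on an initial interval of length $\min\{q,M_1\}$; in case (iii) one simply starts with the right endpoint slightly inside $M_1$, where $\Phi$ is already positive. The main obstacle is the bookkeeping behind the two algebraic simplifications of $\Phi(0)$ and $\Phi(q)$; once executed these collapse the problem to variants of $e^x > 1+x$, and the rest is essentially automatic.
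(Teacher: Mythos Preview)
Your proposal is correct and follows essentially the same approach as the paper: both proofs hinge on the identity $e^{v_1}=1+(M_e-1)v_1/M_1$, reduce $\Phi(0)$ and $\Phi(q)$ via the same algebraic simplifications to tangent-line inequalities for the exponential, and handle (iii) by a direct limit. The only cosmetic difference is in part (i), where you invoke strict convexity of $g(s)=e^s-1-(M_e-1)s/M_1$ with roots $0$ and $v_1$, while the paper instead factors out $e^{q+1}$ and uses monotonicity of $y/e^y$ on $(1,\infty)$; these are equivalent rearrangements of the same comparison.
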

In the rest of this section, we sketch some major steps in the proof of Theorem \ref{tm:1et} to illustrate the idea of how our framework can be applied to solve problem \eqref{eq:P-1et}. Interested readers are referred to the appendix for detailed proofs of Theorem \ref{tm:1et} and Proposition \ref{prop:phi}.
\subsection{Sketch of Proof for Theorem \ref{tm:1et}}
To implement the first step of our framework, we  show that the  optimal solution of \eqref{eq:P-1t} is a two-point distribution and indicate where the two supporting points could possibly be allocated.
\begin{lemma}[The  rough structure of the optimal support]\label{lm:loc-1et} 
The optimal solution of \eqref{eq:P-1et} is a two-point distribution with support $\{u,v\}$ such that $0\leq u<q<v$.
\end{lemma}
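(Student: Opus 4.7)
The plan is to mirror the argument used for Lemma \ref{lm:loc-1t}, with $e^x$ playing the role previously played by $x^t$ (for $t>1$). Let $\bz^*=(z_0^*,z_1^*,z_e^*)^T$ be an optimal solution of \eqref{eq:D-1et} and set $H(x;\bz^*) := z_0^* + z_1^* x + z_e^* e^{x} - (x-q)_+$. By the complementary slackness part of Theorem \ref{tm: gpm}, every optimal primal support point must be a zero of $H(\cdot;\bz^*)$, so the task reduces to counting and locating these zeros on $\R_+$.

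The first step is to show $z_e^*\ge 0$: since $e^x$ dominates $x$ at infinity, a negative $z_e^*$ would force $H(x;\bz^*)\to-\infty$ as $x\to\infty$, contradicting dual feasibility. The next step is to rule out the boundary case $z_e^*=0$. There, dual feasibility for large $x$ requires $z_1^*\ge 1$, feasibility at $x=0$ requires $z_0^*\ge 0$, and minimizing the dual objective with $M_1,M_e>0$ pins $z_0^*=0$, $z_1^*=1$. Then $H(x;\bz^*)=\min(x,q)$ has $x=0$ as its only zero, so the optimal primal would be a point mass at $0$, which has exponential moment $1\le e^{M_1}$, contradicting the standing assumption $M_e>e^{M_1}$ (which already excludes any single-point feasible distribution).

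Once $z_e^*>0$ is established, the function $h_1(x):=z_0^*+z_1^*x+z_e^* e^x$ is strictly convex with a strictly increasing derivative on $\R_+$, and $h_2(x):=(x-q)_+$ is a two-piece linear function continuous at the break point $q$. I would then invoke Lemma \ref{lemma:two-root} with $h_1$ and $h_2$ to conclude that $H(\cdot;\bz^*)=h_1-h_2$ has at most one zero in $[0,q]$ and at most one zero in $[q,+\infty)$. Combined with the exclusion of a single-point primal distribution, this forces the support to consist of exactly two points, one on each side of $q$; a parity/degeneracy argument identical to the one in the proof of Lemma \ref{lm:loc-1t} rules out $u=q$ and $v=q$ (otherwise both zeros would be in a single side and Lemma \ref{lemma:two-root} would again collapse the support to one point). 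This yields $0\le u<q<v$, as required.

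Structurally this is a direct port of the polynomial argument, so I expect no serious obstacle; the only place where care is needed is the degenerate case $z_e^*=0$, where one must use the strict inequality $M_e>e^{M_1}$ (rather than $M_t>M_1^t$) to derive the contradiction. All other steps go through verbatim because the hypotheses of Lemma \ref{lemma:two-root} — convexity and strictly increasing derivative of the smooth piece, plus two-piece linearity with continuity at the break point of the other piece — hold identically for $e^x$ and for $(x-q)_+$.
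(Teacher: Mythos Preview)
Your proposal is correct and follows essentially the same route as the paper: the paper's own proof of Lemma~\ref{lm:loc-1et} explicitly reduces to Lemma~\ref{lm:loc-1t}, showing $z_e^*\ge 0$, ruling out $z_e^*=0$ via the single-point contradiction with $M_e>e^{M_1}$, and then invoking Lemma~\ref{lemma:two-root} for $z_e^*>0$ exactly as you describe. The only cosmetic difference is that the paper cites Lemma~\ref{lm:loc-1t} wholesale rather than re-running each step, but your unpacking of the argument matches it point for point.
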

To make the support set in the lemma above more explicit, we shall continue our discussion with two scenarios, one in which the supporting
point $u$ equals $0$ and one in which it is strictly greater than $0$.

\subsubsection{Results under the support $\{0,v\}$ with $v>q$.} We start with the  boundary
case of u = 0 and provide the analytical form of the optimal support as follows.

\begin{lemma}[Analytical form of the support $\{0,v\}$] \label{lm:exp-0v}
Suppose that $M_e>e^{M_1}$ and that the optimal solution of \eqref{eq:P-1et} is a two-point distribution with support $\{0,v_1\}$ such that $v_1>q$. Then $v_1$ must satisfy \eqref{eqn-v1-1et}, and $q \leq v_1+\frac{M_1}{M_e-1}-1$.
\end{lemma}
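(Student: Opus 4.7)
The plan is to apply Theorem~\ref{tm: gpm} to the postulated two-point optimal distribution with support $\{0, v_1\}$, treating the primal, slack, and (partial) tangent conditions as a joint system for the probabilities $(p_1^*, p_2^*)$, the supporting point $v_1$, and the dual vector $\bz^*=(z_0^*,z_1^*,z_e^*)^T$. Since $x=0$ is a boundary point of $\Omega = \R_+$ and hence not a differentiable interior point, the tangent condition~\eqref{eq:Deriv} applies only at $v_1>q$, giving $z_1^* + z_e^* e^{v_1} = 1$. The slack condition~\eqref{eq:cs} at $x=0$ further forces $z_0^* + z_e^* = 0$, equivalently $H(0;\bz^*)=0$.

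First I would use the primal condition~\eqref{eq:primal} to read off $p_2^* = M_1/v_1$, $p_1^* = 1 - M_1/v_1$, together with the consistency identity $e^{v_1} = 1 + v_1(M_e-1)/M_1$. Writing $a := M_1/(M_e-1)$, this equation rearranges to $(a+v_1)e^{-(a+v_1)} = a\,e^{-a}$, so $-(a+v_1)$ is a real branch of the Lambert W function evaluated at $-a\,e^{-a}$. Convexity of $e^{\cdot}$ together with the standing assumption $M_e > e^{M_1} \ge 1+M_1$ gives $a \in (0,1)$, and an elementary analysis of $g(v) := e^v - 1 - v/a$ (which decreases from $g(0)=0$ to a minimum at $v = -\log a$, then rises to $+\infty$) shows the unique positive root satisfies $v_1 > -\log a > 1-a$, where the second inequality uses $\log a < a-1$ on $(0,1)$. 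Thus $-(a+v_1) < -1$, which selects the $W_{-1}$ branch and produces formula~\eqref{eqn-v1-1et}.

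Next, I would solve the slack-and-tangent linear equations for $\bz^*$ to obtain
\[
z_e^* \;=\; \frac{q}{(v_1-1)e^{v_1}+1}, \qquad z_1^* \;=\; 1 - z_e^*\,e^{v_1}, \qquad z_0^* \;=\; -z_e^*,
\]
where the denominator $(v_1-1)e^{v_1}+1$ is positive on $v_1>0$ since its derivative $v e^v$ is positive and it vanishes at $v=0$. The bound $q \le v_1 + M_1/(M_e-1) - 1$ then comes from dual feasibility near the boundary: because $H(0;\bz^*)=0$ is a minimum on $[0,\infty)$, one must have $H'(0^+;\bz^*) = z_1^* + z_e^* \ge 0$, by an argument analogous to that used inside the proof of Lemma~\ref{lm:sm q}. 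Substituting the expressions above and clearing the positive denominator gives $q(e^{v_1}-1) \le (v_1-1)e^{v_1}+1$; dividing by $e^{v_1}-1 = v_1(M_e-1)/M_1$ collapses the right-hand side to $v_1 + M_1/(M_e-1) - 1$, as claimed.

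The principal obstacle is the Lambert W branch selection: one must rigorously rule out the $W_0$ branch via the auxiliary inequality $-\log a > 1-a$ for $a \in (0,1)$, so that the uniquely determined $v_1$ is positive and matches formula~\eqref{eqn-v1-1et}. The remaining computations are essentially linear algebra in $(\bp^*,\bz^*)$, followed by a single algebraic simplification using the primal consistency identity $e^{v_1}-1 = v_1(M_e-1)/M_1$.
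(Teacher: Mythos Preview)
Your proposal is correct and follows essentially the same route as the paper: write out \eqref{eq:primal}, \eqref{eq:cs}, and \eqref{eq:Deriv} for the support $\{0,v_1\}$, solve for $\bp^*$, $\bz^*$, and the consistency identity $e^{v_1}-1 = v_1(M_e-1)/M_1$, then use $H'(0^+;\bz^*)\ge 0$ (forced by $H(0;\bz^*)=0$ together with dual feasibility) to extract the bound on $q$. The one place where you and the paper diverge is the Lambert~$W$ branch selection: the paper simply observes that the $W_0$ branch yields $v_1=0$ and invokes the standing hypothesis $v_1>q>0$ to discard it, whereas you take the longer scenic route through $g(v)=e^v-1-v/a$ to prove $v_1>-\log a>1-a$ and hence $-(a+v_1)<-1$. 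Your argument is valid and self-contained, but strictly more than what is needed here since $v_1>0$ is already assumed in the lemma statement.
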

Given the analytical form of the optimal support, we can solve and verify the
optimal distribution in this case.

\begin{lemma}[Analytical form of the optimal distribution with support $\{0,v\}$] \label{lm:exp 0v-2}
Suppose that $M_e>e^{M_1}$ with $q \leq v_1+\frac{M_1}{M_e-1}-1$. Then the optimal solution of \eqref{eq:P-1et} can be characterized as
\begin{equation}
\label{opt-disn-1et-a}
X^*= \begin{cases}
      0, & w.p. \;1-  \frac{M_1}{v_1}\\
      v_1, & w.p. \; \frac{M_1}{v_1}\\
    \end{cases}  \quad \mbox{\textup{and}}\quad   \max_{F \in \F_{1e} } \Ex_{F}[(X - q)_+]=M_1\left(1-\frac{q}{v_1}\right),
\end{equation}
where $v_1$ is defined as in \eqref{eqn-v1-1et}.  
\end{lemma}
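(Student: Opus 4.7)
The plan is to mimic the strategy used for the analogous two-point boundary case in Lemma \ref{lm:sm q-b}, namely, to construct a primal-dual candidate from the optimality conditions in Theorem \ref{tm: gpm} and then verify primal and dual feasibility. Concretely, I would take $\bp^*=(1-M_1/v_1,\,M_1/v_1)^T$ on the support $\{0,v_1\}$, and solve the three linear equations furnished by \eqref{eq:cs} at both $0$ and $v_1$ together with \eqref{eq:Deriv} at the interior differentiable point $v_1$, namely $z_0^*+z_e^*=0$, $z_0^*+z_1^*v_1+z_e^*e^{v_1}=v_1-q$, and $z_1^*+z_e^*e^{v_1}=1$. These yield an explicit $(z_0^*,z_1^*,z_e^*)$, in particular $z_e^*=q/((v_1-1)e^{v_1}+1)$, which is strictly positive because $(v-1)e^v+1$ is easily checked to be positive on $(0,\infty)$.

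For primal feasibility, the only nontrivial check is $v_1>M_1$, so that $p_1^*,p_2^*\in(0,1)$. I would prove this by introducing $g(x)=x/(e^x-1)$, showing via a short monotonicity argument that $g$ is strictly decreasing on $(0,\infty)$, and noting that the moment constraint forces $g(v_1)=M_1/(M_e-1)<M_1/(e^{M_1}-1)=g(M_1)$, since the hypothesis $M_e>e^{M_1}$ is in force; strict monotonicity then yields $v_1>M_1$. The remaining moment constraints are built into the construction, and the exponential moment reduces exactly to the defining Lambert-W identity $v_1/(e^{v_1}-1)=M_1/(M_e-1)$ underlying \eqref{eqn-v1-1et}.

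For dual feasibility I would split $H(x;\bz^*)=z_0^*+z_1^*x+z_e^*e^x-(x-q)_+$ at $q$. On both pieces $H$ is strictly convex because $H''=z_e^* e^x>0$ and $H$ is continuous at $q$. On $[q,\infty)$, the tangent condition $H'(v_1;\bz^*)=0$ combined with $H(v_1;\bz^*)=0$ identifies $v_1$ as the unique global minimizer of the convex upper piece, so $H\ge 0$ there. On $[0,q]$, $H(0;\bz^*)=0$ by slackness and $H'(0;\bz^*)=z_1^*+z_e^*$; because $H'$ is strictly increasing there, it suffices to show $z_1^*+z_e^*\ge 0$. The key algebraic observation is that, after substituting the explicit formulas for $z_1^*$ and $z_e^*$ and using the moment identity $v_1/(e^{v_1}-1)=M_1/(M_e-1)$, the inequality $z_1^*+z_e^*\ge 0$ simplifies precisely to $q\le v_1+M_1/(M_e-1)-1$, which is the standing hypothesis.

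The main obstacle is this last algebraic simplification; everything else is routine once the candidate is in hand. Complementary slackness is automatic from the construction at $0$ and $v_1$, so Theorem \ref{tm: gpm} applies and $(\bp^*,\bz^*)$ is a primal-dual optimal pair. The optimal objective value is then simply $p_2^*(v_1-q)=(M_1/v_1)(v_1-q)=M_1(1-q/v_1)$, which matches \eqref{opt-disn-1et-a}.
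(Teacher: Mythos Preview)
Your proposal is correct and follows essentially the same route as the paper: construct $(\bp^*,\bz^*)$ from the $3\times 3$ linear system, verify $v_1>M_1$ via monotonicity of $(e^y-1)/y$ (you use the reciprocal, which is equivalent), check $z_e^*>0$ via $(v_1-1)e^{v_1}+1>0$, and establish dual feasibility piecewise by convexity, with $H'(0;\bz^*)=z_1^*+z_e^*\ge 0$ reducing algebraically to the standing hypothesis. The only point you leave implicit is that $v_1>q$ (needed so that $v_1$ actually lies in the upper piece where the tangent conditions were imposed); the paper notes this explicitly, and it follows at once from the hypothesis because $M_e>e^{M_1}>1+M_1$ gives $M_1/(M_e-1)<1$, hence $q\le v_1+M_1/(M_e-1)-1<v_1$.
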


\subsubsection{Results under the support $\{u,v\}$ with $0<u<q<v$.}
Next, we discuss
the case where $u>0$ and provide the following semi-analytical form of the optimal support.

\begin{lemma}[Semi-analytical form of the support $\{u,v\}$] \label{lm:exp uv-1}
Suppose that $M_e>e^{M_1}$, $q>0$, and the optimal solution of \eqref{eq:P-1et} is a two-point distribution with support $\{u,v_2\}$ such that $0<u<q<v_2$. 
Then  $v_2=q+1-e^{u}\frac{M_1-u}{M_e-e^{u}}$ and $u\in \left(0,\min\left\{M_1,q\right\} \right)$ is a root of the equation defined in \eqref{eqn-v-1et},
and in this case we have
$q>v_1+\frac{M_1}{M_e-1}-1$ with $v_1$ defined as in \eqref{eqn-v1-1et}.
\end{lemma}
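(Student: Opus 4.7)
The plan is to follow the three-step structure of Lemma~\ref{lm:lg q} verbatim, with polynomial expressions replaced by their exponential counterparts. Let $\bp^* = (p_1^*, p_2^*)^T$ and $\bz^* = (z_0^*, z_1^*, z_e^*)^T$ denote optimal primal and dual solutions, and set $H(x;\bz^*) = z_0^* + z_1^* x + z_e^* e^x - (x-q)_+$. Since both $u \in [0,q)$ and $v_2 \in (q,\infty)$ are differentiable interior points of $H$, Theorem~\ref{tm: gpm} supplies the tangent equations $z_1^* + z_e^* e^u = 0$ and $z_1^* + z_e^* e^{v_2} = 1$. These, together with \eqref{eq:primal} and \eqref{eq:cs}, constitute an overdetermined $3\times 2$ linear system for $\bp^*$ and a $4\times 3$ linear system for $\bz^*$, each solvable only when its augmented determinant vanishes. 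Expanding the primal augmented determinant yields $(v_2 - u)(M_e - e^u) = (M_1 - u)(e^{v_2} - e^u)$, and subtracting the two complementary-slackness equations while using $z_e^*(e^{v_2}-e^u)=1$ yields $z_1^*(v_2 - u) = v_2 - q - 1$.

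To derive the formula for $v_2$, I would substitute $z_1^* = -e^u/(e^{v_2}-e^u)$ into the last relation and use the primal identity to replace $(e^{v_2}-e^u)/(v_2-u)$ by $(M_e-e^u)/(M_1-u)$; this produces $v_2 = q + 1 - e^u(M_1-u)/(M_e-e^u)$ directly. To derive the equation for $u$, plug this $v_2$ back into the primal identity (equivalently, into its consequence $e^{v_2} = (q+1-u)(M_e-e^u)/(M_1-u)$) to eliminate $v_2$; a short calculation shows the resulting scalar equation collapses to $\Phi(u) = \tfrac{M_e-e^u}{M_1-u}(u - M_1) + M_e - e^u = 0$, which is exactly \eqref{eqn-v-1et}. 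The range of $u$ then follows immediately: $0 < u < q$ is the lemma's hypothesis, and the mean constraint $M_1 = p_1^* u + p_2^* v_2 > u$ (from $v_2 > u$ and $p_2^* > 0$) yields $u < M_1$, so $u \in (0, \min\{M_1,q\})$.

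The remaining inequality $q > v_1 + M_1/(M_e-1) - 1$ is the analogue of the consistency inequality $q > \tfrac{t-1}{t} M_t^{1/(t-1)}$ in Lemma~\ref{lm:lg q}, and I expect it to be the main obstacle. My approach is to evaluate $\Phi$ at the endpoint $u = 0$: using the Lambert~W identity \eqref{eqn-v1-1et} in the equivalent form $v_1(M_e-1) = M_1(e^{v_1}-1)$, a direct calculation with $r = q + 1 - M_1/(M_e-1)$ gives $\Phi(0) = (M_e-1)r/M_1 + 1 - e^r = -f(r)$, where $f(r) = e^r - 1 - (M_e-1)r/M_1$ is convex and, by the Lambert~W identity, has exactly the two zeros $r = 0$ and $r = v_1$. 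Hence $\Phi(0) < 0$ iff $r > v_1$ iff $q > v_1 + M_1/(M_e-1) - 1$. To close the loop, I would argue by contradiction: if $q \le v_1 + M_1/(M_e-1) - 1$, then Lemma~\ref{lm:exp-0v} yields an optimal primal-dual pair with support $\{0,v_1\}$ and $z_1^* \ge 0$, whereas the current $u > 0$ construction forces $z_1^* = -z_e^* e^u < 0$ (since $z_e^* = 1/(e^{v_2}-e^u) > 0$ and $u > 0$); the sign incompatibility of the two dual slacks, together with $\Phi(0) \geq 0$ blocking a genuinely positive root of $\Phi$ in the admissible range, delivers the strict inequality. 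Steps one and two are essentially linear-algebraic parallels of Lemma~\ref{lm:lg q}, but the sharp transcendental threshold on $q$ is where the analysis becomes genuinely delicate, since it hinges on the Lambert~W characterization of $v_1$ and on a careful sign tracking of the dual variable $z_1^*$.
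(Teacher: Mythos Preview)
Your derivation of the $v_2$ formula and the range $u\in(0,\min\{M_1,q\})$ is correct and matches the paper's linear-algebraic route (determinant conditions on \eqref{eq:primal} and on \eqref{eq:cs}--\eqref{eq:Deriv}). One slip: the expression you display for $\Phi(u)$ is identically zero, since $\tfrac{M_e-e^u}{M_1-u}(u-M_1)+M_e-e^u = -(M_e-e^u)+(M_e-e^u)=0$. What you actually need (and what your preceding line $e^{v_2}=(q+1-u)(M_e-e^u)/(M_1-u)$ together with $v_2=q+1-e^u(M_1-u)/(M_e-e^u)$ yields) is precisely \eqref{eqn-v-1et}; this is a transcription error, not a conceptual one.

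The genuine gap is in the final inequality $q>v_1+\tfrac{M_1}{M_e-1}-1$. Your contradiction argument does not close. First, the dual associated with the $\{0,v_1\}$ support does \emph{not} satisfy $z_1^*\ge 0$; what Lemma~\ref{lm:exp-0v} actually gives is $H'(0;\bz^*)=z_1^*+z_e^*\ge 0$. Second, and more importantly, even after correcting this, the ``sign incompatibility'' is between two \emph{different} dual optimal solutions (one built from $\{0,v_1\}$, the other from $\{u,v_2\}$), and nothing you have written rules out dual non-uniqueness. Finally, ``$\Phi(0)\ge 0$ blocking a genuinely positive root'' is not an argument: a continuous function can perfectly well be nonnegative at $0$ and vanish somewhere in $(0,\min\{M_1,q\})$. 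Your computation $\Phi(0)=-f(r)$ is correct and is exactly what the paper uses to prove Proposition~\ref{prop:phi}(i), but it does not by itself prove the inequality here.

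Your route can be repaired: if $q\le v_1+\tfrac{M_1}{M_e-1}-1$, then Lemma~\ref{lm:exp 0v-2} (not~\ref{lm:exp-0v}) makes $\{0,v_1\}$ an optimal primal; by complementary slackness with the \emph{single} optimal dual $\bz^*$ attached to $\{u,v_2\}$, one gets $H(0;\bz^*)=H(u;\bz^*)=0$, two distinct zeros in $[0,q]$, contradicting Lemma~\ref{lemma:two-root} since $z_e^*>0$. The paper instead proceeds directly: it writes $q+1=\tfrac{v_2e^{v_2}-ue^u}{e^{v_2}-e^u}$ and shows the auxiliary function $\Phi_1(x,y)=\tfrac{xe^x-ye^y}{e^x-e^y}$ is increasing in each argument, so $\Phi_1(v_2,u)>\Phi_1(v_2,0)>\Phi_1(v_1,0)=v_1+\tfrac{M_1}{M_e-1}$, the last step requiring a separate monotonicity argument that $v_2>v_1$ via $\Phi_2(y)=\tfrac{M_e-e^y}{M_1-y}$. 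Either approach works; the paper's is self-contained, while yours (once fixed) leans on the already-proved boundary case.
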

Finally, the semi-analytical form of the optimal support enables us to identify the optimal distribution as shown below.
\begin{lemma}[Semi-analytical form of the optimal distribution with support $\{u,v\}$] \label{lm:exp uv-2}
Suppose that $M_e>e^{M_1}$ and $q>v_1+\frac{M_1}{M_e-1}-1$, with $v_1$ defined in \eqref{eqn-v1-1et}.
Let 
$v_2=q+1-e^{u}\frac{M_1-u}{M_e-e^{u}}$, where $u \in \left(0,\min\left\{M_1,q\right\}\right)$ is any root of equation \eqref{eqn-v-1et}. Then $0<u<q<v_2$, and the optimal distribution for problem \eqref{eq:P-1et} can be  characterized  as
\begin{equation}\label{opt-disn-1et-b}
X^*= \begin{cases}
      u & w.p. \;\frac{v_2-M_1}{v_2-u}\\
      v_2 & w.p.  \;\frac{M_1-u}{v_2-u},\\
    \end{cases}\quad \mbox{and} \quad   \max_{F \in \F_{1e}}  \Ex_{F}[(X - q)_+]=\frac{(v_2-q)(M_1-u)}{v_2-u}.  
\end{equation}
\end{lemma}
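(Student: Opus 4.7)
The plan follows the same three-part verification template used in Lemma \ref{lm:lg q-b}. First, I would construct the candidate primal-dual pair $(\bp^*,\bz^*)$ directly from \eqref{eq:primal}, \eqref{eq:cs}, and \eqref{eq:Deriv} applied at the support $\{u,v_2\}$. By Lemma \ref{lm:exp uv-1}, the resulting $4\times 3$ linear system in $\bz$ is consistent, and its solution can be read off as $z_e^*=1/(e^{v_2}-e^u)$, $z_1^*=-e^u z_e^*$, and $z_0^*=-z_1^* u-z_e^* e^u$, while the probabilities are forced to be $p_1^*=(v_2-M_1)/(v_2-u)$ and $p_2^*=(M_1-u)/(v_2-u)$. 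By Theorem \ref{tm: gpm}, it then remains to verify: (a) the ordering $0<u<q<v_2$ together with $u<M_1<v_2$ (which gives primal feasibility $p_j^*\in(0,1)$); (b) dual feasibility $H(x;\bz^*):=z_0^*+z_1^*x+z_e^*e^x-(x-q)_+\ge 0$ on $\R_+$. Complementary slackness is automatic, since $H(u;\bz^*)=H(v_2;\bz^*)=0$ by the construction of $\bz^*$.

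For step (a), the assumption $u\in(0,\min\{M_1,q\})$ immediately gives $0<u<q$ and $u<M_1$. The heart of the argument is showing $q<v_2$ and $M_1<v_2$. Rearranging the equation $\Phi(u)=0$ from \eqref{eqn-v-1et} yields the identity
\begin{equation*}
e^{v_2}=\frac{M_e-e^u}{M_1-u}(q+1-M_1)+M_e-e^u,
\end{equation*}
from which one can extract both ordering inequalities. The bound $v_2>q$ is equivalent to $e^u(M_1-u+1)<M_e$, which I would establish by combining the displayed identity with the hypothesis $q>v_1+\tfrac{M_1}{M_e-1}-1$ and the defining equation \eqref{eqn-v1-1et} of $v_1$; the value $v_1+\tfrac{M_1}{M_e-1}-1$ is precisely the threshold at which $v_2$ collapses onto $q$, so the strict inequality on $q$ translates into the desired strict inequality on $v_2$. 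The bound $v_2>M_1$ then follows by a monotonicity argument on the same identity, using $u<M_1$ and $M_e>e^{M_1}$.

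Step (b) is the cleaner part. Because $H''(x;\bz^*)=z_e^*\,e^x>0$ on each of $[0,q)$ and $(q,\infty)$, the function $H(\cdot;\bz^*)$ is strictly convex on each of these pieces. The tangent conditions in \eqref{eq:Deriv} force $H'(u;\bz^*)=H'(v_2;\bz^*)=0$, while $H(u;\bz^*)=H(v_2;\bz^*)=0$ holds by \eqref{eq:cs}. Since $u\in[0,q)$ and $v_2\in(q,\infty)$, strict convexity promotes these critical points to global minima of $H$ on their respective pieces, each with value $0$. Continuity of $H$ at the kink $q$ (the jump of $(x-q)_+$ vanishes there) then gives $H\ge 0$ on all of $\R_+$. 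Applying Theorem \ref{tm: gpm} completes the optimality argument, and the stated objective value is obtained simply as $p_1^*\cdot 0+p_2^*(v_2-q)=\tfrac{(v_2-q)(M_1-u)}{v_2-u}$.

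The main obstacle will be step (a): converting the transcendental defining equation \eqref{eqn-v-1et} and the Lambert-$W$-based hypothesis on $q$ into the clean geometric inequalities $q<v_2$ and $M_1<v_2$. Once these are secured, everything downstream is routine primal-dual bookkeeping that mirrors the polynomial case treated in Lemma \ref{lm:lg q-b}, with the exponential $e^x$ playing the role of $x^t$ and with strict convexity supplied directly by the sign of $z_e^*$ rather than indirectly through monotonicity of a power.
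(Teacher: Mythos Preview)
Your overall structure mirrors the paper's proof: construct $(\bp^*,\bz^*)$ from the linear systems in \eqref{eq:primal}, \eqref{eq:cs}, \eqref{eq:Deriv}, verify $\bp^*\in(0,1)^2$ via the ordering $u<M_1<v_2$, and then check dual feasibility by piecewise convexity plus the tangent/value conditions at $u$ and $v_2$. Step (b) is essentially identical to the paper, as is the argument for $M_1<v_2$ (the paper makes it explicit via the monotonicity of $\Phi_2(y)=\tfrac{M_e-e^y}{M_1-y}$ on $[0,M_1]$ together with $\Phi_2(u)=\Phi_2(v_2)$, which is exactly the ``monotonicity argument on the same identity'' you allude to).

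The one place your plan goes astray is the justification of $v_2>q$. You correctly reduce it to $e^u(M_1-u+1)<M_e$, but this inequality has nothing to do with $q$, and your claim that ``$v_1+\tfrac{M_1}{M_e-1}-1$ is precisely the threshold at which $v_2$ collapses onto $q$'' is false: at that boundary value of $q$ (where $u\to 0$) one gets $v_2\to v_1$, not $v_2\to q$. In fact there is no such threshold, because $y\mapsto e^y(M_1-y+1)$ is increasing on $[0,M_1]$ with maximum $e^{M_1}<M_e$, so $e^u(M_1-u+1)<M_e$ holds for \emph{every} $u\in(0,M_1)$. This is exactly how the paper argues: writing $v_2=q+\Phi_3(u)/(M_e-e^u)$ with $\Phi_3(y)=M_e-e^y(M_1-y+1)$, one has $\Phi_3'(y)=e^y(y-M_1)\le 0$ on $[0,M_1]$, hence $\Phi_3(u)>\Phi_3(M_1)=M_e-e^{M_1}>0$, giving $v_2>q$ directly. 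The hypothesis $q>v_1+\tfrac{M_1}{M_e-1}-1$ is used only to guarantee (via Proposition~\ref{prop:phi}) that a root $u$ of \eqref{eqn-v-1et} exists in the stated interval; it plays no further role in the verification. Once you replace your threshold argument with this one-line monotonicity observation, the proof goes through as you describe.
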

\subsubsection{The proof of Theorem \ref{tm:1et}.}
Step $2$ and Step $3$ of our framework
have been accomplished for the two scenarios, and Theorem \ref{tm:1et} readily follows from Lemmas \ref{lm:exp 0v-2} and \ref{lm:exp uv-2}.
\section{Numerical Experiments}\label{sec:numerics}
In this section, we show that the semi-analytical solutions derived in the previous sections can also be used to numerically solve moment problems and distributionally robust problems under the newsvendor model, which provides further evidence for the capability of our proposed framework.
Since our unified framework is more of theoretical interest, the two numerical examples provided in this section are only for illustrative
purposes. We leave a deeper investigation of efficient
computational approaches for general moment problems for
future work. All of the numerical experiments in this section were performed using Matlab R2017a running on a 2.9GHz i7-7820HQ PC with 16GB memory.
\subsection{The $1$st and $t$-th moment problem in the newsvendor model} 
We first consider the $1$st and $t$-th moment problem \eqref{eq:P-1t} in Section \ref{Sec:1t} for a given $q$ (the order quantity in the newsvendor model). According to Theorem \ref{tm:1t}, this problem has a closed-form solution if $q\leq \frac{t-1}{t}M_t^{\frac{1}{t-1}}$. Otherwise, the problem cannot be solved analytically. 
In this case, a semi-analytical solution is presented in Theorem \ref{tm:1t} such that the optimal value is determined by a parameter $v$ that can be any root of the equation defined in \eqref{Eqn-Theta} and can be found using the bisection method presented in Algorithm \ref{Alg: bm}. In the following, we summarize this procedure for solving problem \eqref{eq:P-1t} in Algorithm \ref{Alg: oem1}, where {\bf BM}$(\cdot)$ is the bisection method defined in Algorithm \ref{Alg: bm}.
 \begin{algorithm}[H] 
\caption{Solve the $1$st and $t$-th moment problem} \label{Alg: oem1}
\begin{algorithmic}[1]
\State {\bf input:} moment parameters $M_1$, $M_t$, and $t$, order quantity $q$, tolerance $\epsilon>0$
\State scale $\hat M_t=\frac{M_t}{M_1^t}$  and  $\hat q=\frac{q}{M_1}$ such that $\hat M_1 =1$
\State construct function $\Theta(\cdot)$ from \eqref{Eqn-Theta} for bisection search
\If{$\hat q\leq \frac{t-1}{t}{\hat M}_t^{\frac{1}{t-1}}$}  $Z^*(\hat q)=1-\hat q {\hat M}_t^{-\frac{1}{t-1}}$ 
\Else {compute $v=${\bf BM}$\left(\Theta,\max\{{\hat M}_t^{\frac{1}{t-1}},\hat q\},\frac{t}{t-1}\hat q,\epsilon\right)$ from bisection search; }
\State \; calculate $u=\frac{t \hat q}{t-1}\frac{v^{t-1}-{\hat M}_t}{v^t-{\hat M}_t}$, and $Z^*(\hat q)=\frac{(v-\hat q)(1-u)}{v-u}$\EndIf
\State {\bf output:} $M_1\cdot Z^*(\hat q)$ ({\it perform rescaling})
\end{algorithmic}\label{Alg:1t}
\end{algorithm}
There are two other approaches that can solve the dual problem \eqref{eq:D-1t} of \eqref{eq:P-1t}. The first one is the classical SDP approach introduced by \cite{bertsimas2005optimal}. However, this approach works only if $t$ is a rational number, and its dimension also depends on $t$, which could possibly lead to a high computational cost. The second approach solves the dual problem \eqref{eq:D-1t} with the relative entropy (RE) formulation due to \cite{das2021heavy}. Since both the SDP formulation and the RE formulation are convex, we can resort to an off-the-shelf convex optimization solver such as SDPT3 (version 4.0) for an exact solution. 
\begin{figure}
\centering
    \subfloat
    {
        \begin{minipage}[t]{0.5\textwidth}
            \centering          
            \includegraphics[width=0.95\textwidth]{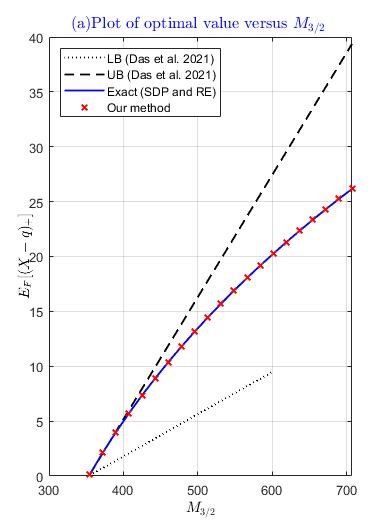}   
        \end{minipage}
    }\hspace{-10mm}
    \subfloat
    {
        \begin{minipage}[t]{0.5\textwidth}
            \centering      
            \includegraphics[width=0.95\textwidth]{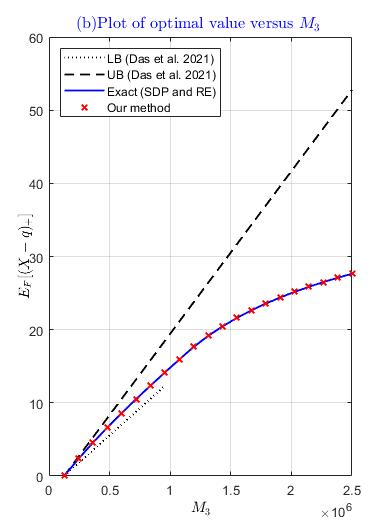}   
        \end{minipage}
    }
\caption{Given $q=100$, $M_1=50$, plot of optimal values versus $M_t$, with  $t=3/2$ for Plot(a) and $t=3$ for Plot(b). }
\label{Fg:1-b}
\end{figure}   
\begin{figure}
\centering
    \subfloat
    {
        \begin{minipage}[t]{0.5\textwidth}
            \centering          
            \includegraphics[width=0.95\textwidth]{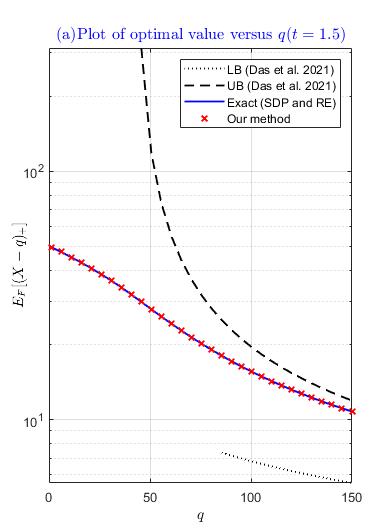}   
        \end{minipage}
    }\hspace{-10mm}
    \subfloat
    {
        \begin{minipage}[t]{0.5\textwidth}
            \centering      
            \includegraphics[width=0.95\textwidth]{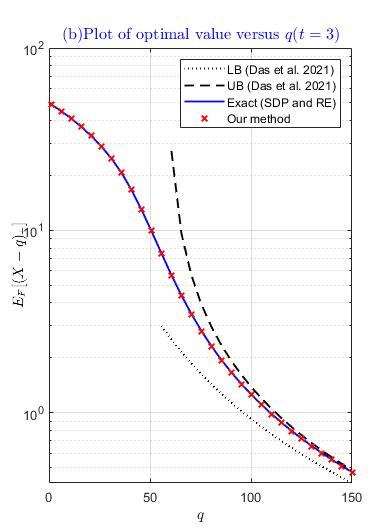}   
        \end{minipage}
    }
\caption{Plot of optimal values versus $q$ with $M_1=50$,  $M_{3/2}=1.5\times M_1^{3/2}$ for Plot(a) and $M_{3}=1.5\times M_1^3$ for Plot(b).}
\label{Fg:2}
\end{figure}  
In the numerical experiment, we apply our Algorithm \ref{Alg:1t}, the SDP approach, and the RE approach to solve the $1$st and $t$-th moment problem \eqref{eq:P-1t} with $t=3/2$ and $t=3$, and compare the optimal values computed by the three algorithms with the upper and lower bounds (abbreviated as UB and LB) from 
\cite{das2021heavy}. We set the order quantity $q=100$ and mean demand $M_1=50$, and we visualize the optimal value as a function of $M_t$ in Figure \ref{Fg:1-b}. Then we set $M_1 = 50$, $M_{\frac{3}{2}}=1.5\times M_1^{3/2}$, and $M_{3}=1.5\times M_1^3$, and we plot the optimal value as a function of $q$ in Figure \ref{Fg:2}. Both figures
indicate that the value obtained by our method is strictly larger than the lower bound and less than the upper bound by a significant margin. Note that the curves of the upper and lower bounds fail to span the entire horizontal axis, as those bounds are only valid within certain ranges of $M_t$ and $q$.
Moreover, the two figures show that the curve obtained by our approach coincides with the curves of the SDP and RE approaches. This confirms that our method can solve the moment problem \eqref{eq:P-1t} exactly. 
\begin{figure}
\centering
    \subfloat
    {
        \begin{minipage}[t]{0.5\textwidth}
            \centering          
            \includegraphics[width=0.95\textwidth]{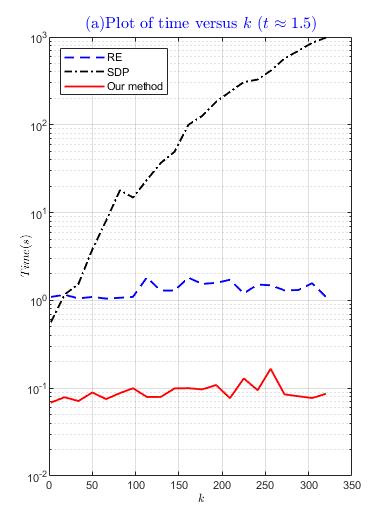}   
        \end{minipage}
    }\hspace{-10mm}
    \subfloat
    {
        \begin{minipage}[t]{0.5\textwidth}
            \centering     
            \includegraphics[width=0.95\textwidth]{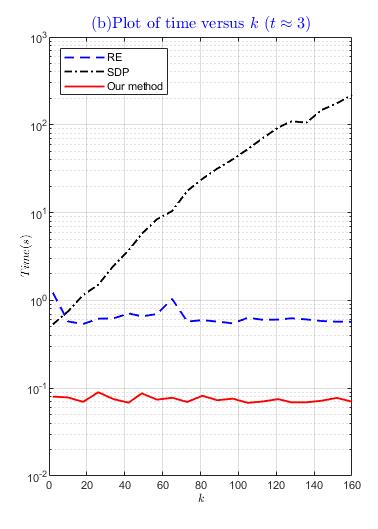}   
        \end{minipage}
    }
\caption{Given $q=100$ and $M_1=50$, plot of the running time versus $k$ with $t=\frac{1.5k+1}{k}, M_t=M_1^t+1$ for Plot(a) and $t= \frac{3k+1}{k}, M_t=M_1^t+1$ for Plot(b).}
\label{Fg:3}
\end{figure}  
Since there are three algorithms that can solve problem \eqref{eq:P-1t}, we further report their running time in Figure \ref{Fg:3} for $q=100$ and $M_1=50$. We plot the running time as a function of $k$ with $t=\frac{1.5\times k+1}{k}, M_t=M_1^t+1$ for Plot(a) and $t= \frac{3\times k+1}{k}, M_t=M_1^t+1$ for Plot(b). In this figure, it can been seen that the running time of our method and of the RE approach remain almost constant regardless of the value of $k$, while the SDP approach consumes much more time as the value of $k$ increases. This is because the dimension of the SDP is dependent on the numerator of $t$, and our choices of $t$ make such a phenomenon noticeable.   
\subsection{The distributionally robust newsvendor model with a $1$st and exponential moment ambiguity set}
In this subsection, we further demonstrate the potential of our framework by solving a 
distributionally robust newsvendor problem that cannot be tackled by either the SDP or the RE approach. 
Recall that in the newsvendor model, 
the number of
items needs to be ordered before the random demand $X$ is realized. Suppose that the unit revenue and unit purchase cost for this item are $p$ and $c$, respectively, with $p>c>0$. Given a cumulative distribution function $F$ of $X$, we want to decide on the order quantity $q$ that will maximize the total expected profit:
$$
\max_{q} \left( p\, \Ex_{F} [\min(q,X)] - c\,q  \right),
$$
where we assume that unsold units have zero salvage value. In practice, the exact demand distribution $F$ is often unknown, so here we assume that it lies in the following ambiguity set defined by the $1$st and exponential moment:
\begin{equation}\label{moment-1-e}
\F_{1e}=\left\{F \in \mathbb{M}(\R_+) : \int_{0}^\infty \mathrm{d}F(x)=1, \int_{0}^\infty x \mathrm{d}F(x)=M_1,\int_{0}^\infty e^{tx} \mathrm{d}F(x)=M_e\,\,\,\right\}.
\end{equation}
In this subsection, we consider the distributional robust newsvendor problem that maximizes the worst-case expected profit with respect to the ambiguity set $\F_{1e}$:
\begin{equation*} \label{Model: Robust}
\max_{q}\min_{F \in \F_{1e}} \left( p\, \Ex_{F} [\min(q,X)] - c\,q \right).
\end{equation*}
Using the relation $\min(q,X)=X-(X-q)_+$, this problem is equivalent to 
\begin{equation} \label{eq:P-1et-num}
\min_{q\geq 0} \max_{F \in \F_{1e}}  \left( \Ex_{F}[(X - q)_+]+(1-\eta)q \right),
\end{equation}
where $\eta=1-\frac{c}{p}$ denotes the critical ratio.
Our {approach} to problem \eqref{eq:P-1et-num} is based on the observation that the objective function $f(q):=\max_{F \in \F_{1e}}  \Ex_{F}[(X - q)_+]+(1-\eta)q$ is convex in $q$, as it is the maximum of a collection of convex functions having the form $ \Ex_{F}[(X - q)_+]$ and the convexity is preserved under the maximum operator. 
Therefore, to globally minimize $f(q)$, we can resort to the golden section search method, which requires knowledge of the function value in each step of the search procedure. Furthermore, evaluating the function value $f(q)$ for some given point $q$ is exactly the moment problem \eqref{eq:P-1et} subject to the $1$st and exponential moment constraints. According to Theorem \ref{tm:1et}, such a problem has a closed-form solution if $q\leq v_1+\frac{M_1}{M_e-1}-1$, with $v_1=-W_{-1}\left(\frac{-M_1}{M_e-1}e^{-\frac{M_1}{M_e-1}}\right)-\frac{M_1}{M_e-1}$. For other cases, a semi-closed form solution is presented in Theorem \ref{tm:1t}, where
the optimal value is determined by a parameter $u$ that can be any root of the equation defined in \eqref{eqn-v-1et}. Similar to Algorithm \ref{Alg: oem1}, we use 
the bisection method {\bf BM}$(\cdot)$   defined in Algorithm \ref{Alg: bm}
to find one root of \eqref{eqn-v-1et} and thus solve the moment problem \eqref{eq:P-1et} in Algorithm \ref{Alg: oem2}.
\begin{algorithm}[H] 
\caption{Solve the $1$st and  exponential moment problem: {\bf EM}$(M_1, M_e,t,q, \epsilon)$} \label{Alg: oem2}
\begin{algorithmic}[1]
\State {\bf input:} moment parameters $M_1$, $M_e$, and $t$, order quantity $q$, tolerance $\epsilon>0$
\State scale $\hat M_1=t{M_1}$, $\hat M_e =M_e$, and  $\hat q=t q$
\State construct function $\Phi(\cdot)$ from \eqref{eqn-v-1et} for bisection search
\State compute $v_1=-W_{-1}\left(\frac{-\hat M_1}{\hat M_e-1}e^{-\frac{\hat M_1}{\hat M_e-1}}\right)-\frac{\hat M_1}{\hat M_e-1}$
\If{$\hat q\leq v_1+\frac{\hat M_1}{\hat M_e-1}-1$}  $Z^*(\hat q)=\hat M_1(1-\frac{\hat q}{v_1})_{+}$ 
\Else {compute $u=${\bf BM}$\left(\Phi,0,\min\{\hat M_1,\hat q\},\epsilon\right)$ from bisection search; }
\State \; calculate $v_2=\hat q+1-e^u\frac{\hat M_1-u}{\hat M_e-e^u}$, and $Z^*(\hat q)=\frac{(v_2-\hat q)(\hat M_1-u)}{v_2-u}$
\EndIf
\State {\bf output:} $\frac{Z^*(\hat q)}{t}$ ({\it perform rescaling})
\end{algorithmic}
\end{algorithm}
Letting {\bf EM}$(\cdot)$ be the operator for calling Algorithm \ref{Alg: oem2}, we present the golden section search method to solve the distributionally robust newsvendor problem \eqref{eq:P-1et-num} in Algorithm \ref{Alg: oem3}. We first input an interval $[a, b]$ with $a = 0$ and $f(a) < f(b)$, where $b$ can be found by repeatedly doubling its value until the inequality holds. Since $f(\cdot)$ is convex and the optimal solution $q^* > 0$, this procedure ensures that $q^*$ is included in $[a, b]$.
Then, in each step of the while loop in Algorithm \ref{Alg: oem3}, two points $x_1$ and $x_2$ are computed based on the golden ratio. These two points divide the search interval into three subintervals. 
We evaluate the function values at $x_1$ and $x_2$ by calling the operator {\bf EM}$(\cdot)$, and
drop the rightmost or leftmost subinterval based on a comparison of their values. If the length of the remaining interval is less than $\epsilon$, the algorithm terminates and returns an $\epsilon$-optimal order quantity.

In the following experiment, we consider a classical newsvendor problem where a random demand follows a predetermined distribution that is unknown to the decision maker. We use robust models to hedge against the ambiguity in the distribution. In particular, we apply Algorithm \ref{Alg: oem3} to solve the distributionally
robust newsvendor model \eqref{eq:P-1et-num} and
compare its optimal order quantity with
two other  distributionally robust models. The ambiguity sets of these two models are defined by the first two moments (Scarf's model) and by the $1$st and $t$-th moments (the model in \citealt{das2021heavy}). 
\begin{algorithm}[H] 
\caption{Solve distributionally robust problem \eqref{eq:P-1et-num} by golden section search} \label{Alg: oem3}
\begin{algorithmic}[1]
\State {\bf input:} moment parameters $M_1$, $M_e$, and $t$, critical ratio $\eta$, interval $[a, b]$ for golden ratio
search, tolerance $\epsilon>0$
\While{true}
    \State determine two intermediate points $x_1=b-d, x_2=a+d$, with $d=\frac{\sqrt{5}-1}{2}(b-a)$ 
    \State compute the objective value at $x_1$ and $x_2$: 
     $$f^*_1=\mbox{\bf EM}(M_1, M_e,t,x_1,\epsilon)+(1-\eta)q,\; \mbox{and}\; f^*_2=\mbox{\bf EM}(M_1, M_e,t,x_2,\epsilon)+(1-\eta)q.$$
    \If{$f^*_1 \leq f^*_2$} update $a=a, b=x_2$ ({\it drop the interval $[x_2, b]$})
    \Else{update $a=x_1, b=b$} ({\it drop the interval $[ a,x_1]$})\EndIf
    \If{$\frac{b-a}{2}\leq \epsilon$}  \text{stop} and \text{\bf output:} $q^*=\frac{a+b}{2}$ \EndIf
\EndWhile
\end{algorithmic}
\end{algorithm}
We further set $t=5$ as in \cite{das2021heavy}, as the authors found that the $1$st and $5$th moments make the robust model less conservative and closer to the ground truth model than other settings in terms of the  optimal order quantity.
In addition, we provide the optimal order quantity for the ground truth model as a benchmark.
Regarding the demand distribution of the ground truth model, we consider the following three distributions, which admit different types of light tailed behavior:
\begin{itemize}
    \item Exponential random variable (density function $\phi(x)=\lambda e^{\lambda x}$ for $x>0$) with $\lambda=1/50$;
    \item Gamma random variable (density function $\phi(x)=\frac{x^k e^{-\frac{x}{\theta}}}{\Gamma(k)\theta^k}$ for $x>0$) with  $k=100$ and $\theta=0.5$;
    \item Weibull random variable (density function $\phi(x)=\frac{k}{\lambda}(\frac{x}{\lambda})^{k-1}e^{-(\frac{x}{\lambda})^{k}}$ for $x\ge 0$) with  $k = 100$ and $\lambda=50$.
\end{itemize}

The parameters for the three distributions are deliberately chosen such that they are all light tailed and all moments of finite order exist, while the exponential moment in \eqref{moment-1-e} exists only when
\begin{figure}[htbp]
\centering
    \subfloat
    {
        \begin{minipage}[t]{0.35\textwidth}
            \includegraphics[width=0.95\textwidth]{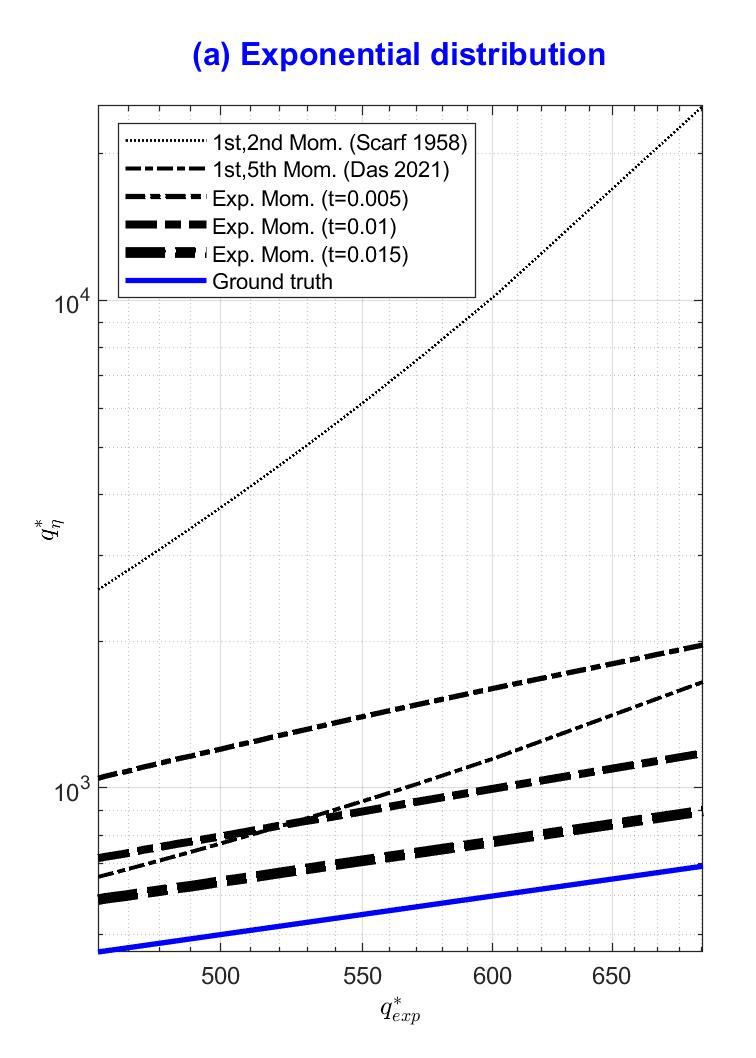} 
        \end{minipage}
    }\hspace{-10mm}
    \subfloat 
    {
        \begin{minipage}[t]{0.35\textwidth}
            \includegraphics[width=0.95\textwidth]{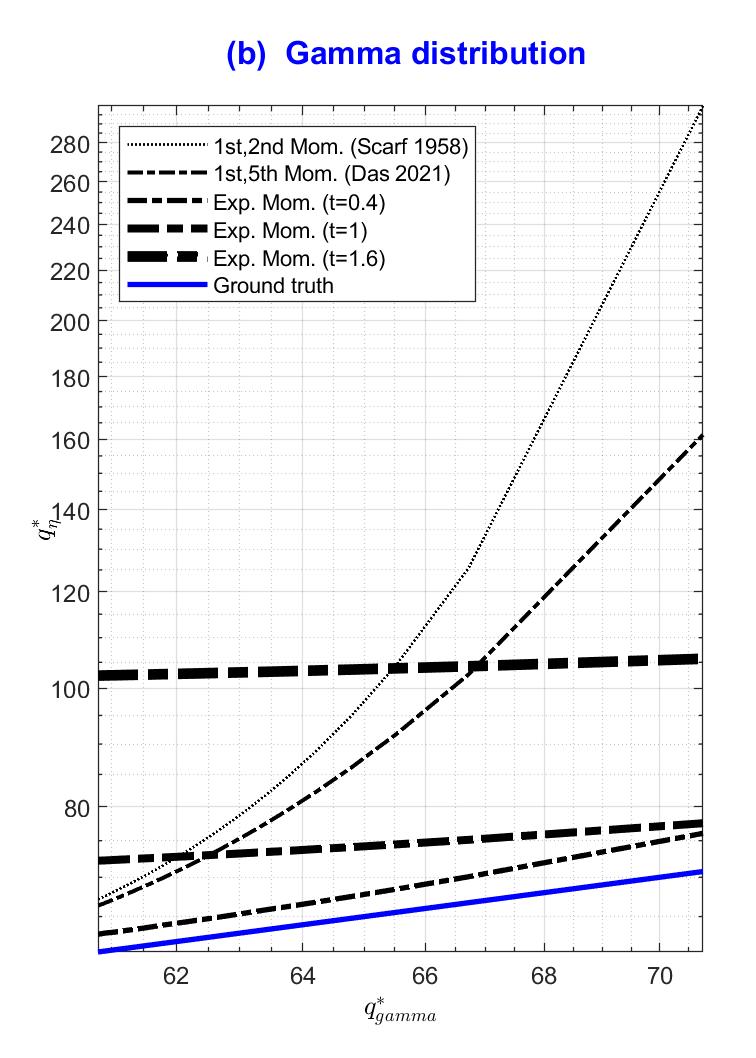}   
        \end{minipage}
    }\hspace{-10mm}
        \subfloat
    {
        \begin{minipage}[t]{0.35\textwidth}
            \includegraphics[width=0.95\textwidth]{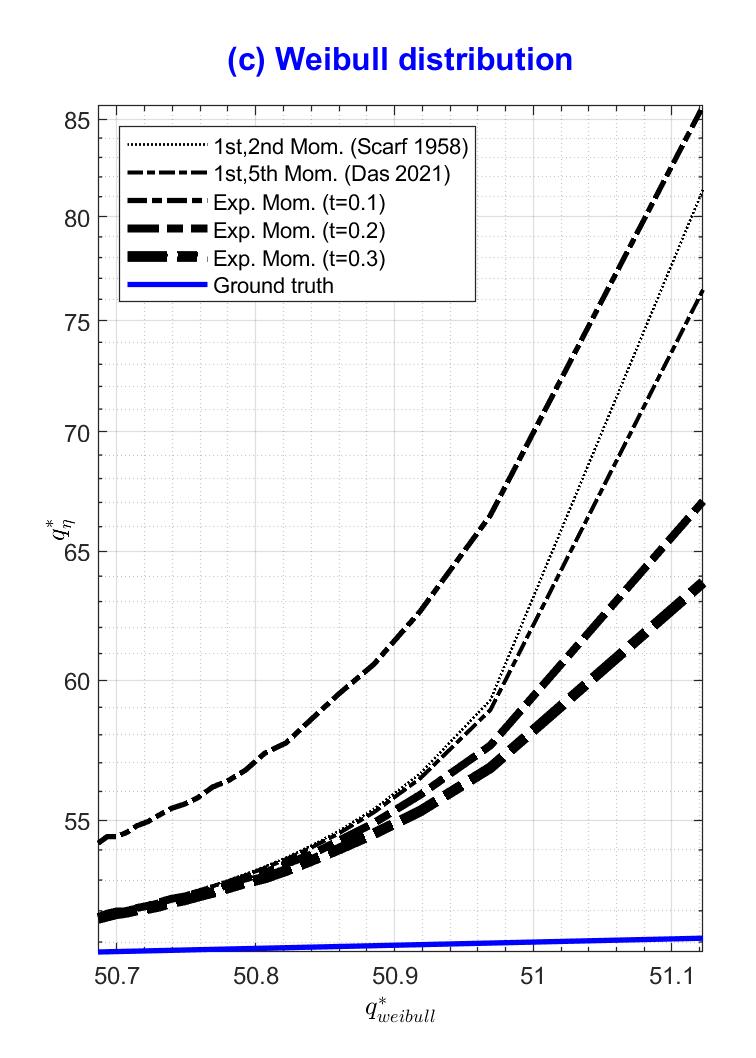}   
        \end{minipage}
    }
 \caption{Log-log plots of the optimal order quantities, where the ground truth demand follows exponential distribution in Plot(a),  gamma distribution in Plot (b), and Weibull distribution in Plot (c). }
\label{Fg:5}
\end{figure}  
$$
\left\{\begin{array}{ll} t< \lambda,& \mbox{for the exponential distribution}\\
t< \frac{1}{\theta}, & \mbox{for the gamma distribution}\\
t\in \R, & \mbox{for the Weibull distribution} \end{array} 
\right. .
$$
In Figure \ref{Fg:5}, we present the log-log plots of the optimal order quantities for the three distributionally robust newsvendor models, where the ground truth demand follows an exponential, gamma, or Weibull distribution. 
The horizontal-axis value and the vertical-axis value of every point in the subplots of Figure \ref{Fg:5}  represent, respectively, the optimal order quantities of the ground truth model and a specific distributionally robust model under the same critical ratio $\eta$.
Moreover, the critical ratio $\eta$ varies within the range $[0.9999,0.999999]$ for the exponential distribution, and within $[0.98,0.9999]$ for the gamma and Weibull distributions. In Figure \ref{Fg:5}, it can been seen that Algorithm \ref{Alg: oem3} solves the distributionally robust newsvendor problem \eqref{eq:P-1et-num} well, as it provides reasonable order quantities that are closer to the ground truth than the $1$st and $5$th moment model and Scarf's model for most instances when the critical ratio $\eta$ is large. This also indicates that incorporating exponential moment information in the model has a better chance of capturing the light tailed behavior of the underlying distribution.
\section{Conclusion}
To summarize, we propose a framework that is built on a novel primal-dual optimality condition and provides a unified treatment for generalized moment problems. Through solving three concrete moment problems, we show that this framework not only reproduces some known analytical results, but also demonstrates great potential for accommodating nonstandard moments. Even for these well-studied models, 
our framework provides new insights.
Therefore, it would be interesting to see whether there is any improvement when  our framework is applied to other existing models.
It is worth mentioning that our framework can also solve moment problems with shape constraints \citep{2005sdpconvex, perakis2008regret, van2016generalized}
that include symmetry, unimodality, convexity, and more. This is done by first using Choquet theory \citep{phelps2001lectures,2005sdpconvex} to transform these problems into problems without shape constraints and then applying the framework.
However, there are some related problems that our framework cannot handle. These problems include non-convex-shaped moment problems \citep{chen2021discrete} and discrete moment problems \citep{ninh2013discrete,prekopa2016relationship,ninh2019robust}. This is due to their inherent nonconvex natures (see, e.g., \citealt{chen2021discrete}) and the nonsmoothness of the functions in the dual constraints (see, e.g., \citealt{prekopa2016relationship}), which we leave for future work.
Another possible research direction is to extend the current framework from the univariate case to the multivariate case, as the moment problem with multidimensional random variables has more meaningful applications. One thought that immediately comes to mind is to replace the derivatives in condition \eqref{eq:Deriv} with directional derivatives
in the multivariate setting. Then, combining the optimality conditions \eqref{eq:primal}, \eqref{eq:cs}, and \eqref{eq:Deriv} will lead to
a more complicated nonlinear system, which we also leave for further study.

\section*{Acknowledgments} S. He is supported by the National Natural Science Foundation of China
[Grants 71771141,71825003 and 72192832] and the Program for Innovative Research Team of Shanghai University
of Finance and Economics. B. Jiang's research is supported by the National Natural Science Foundation of China
[Grants 72171141, 72150001 and 11831002], and
Program for Innovative Research Team of Shanghai University of Finance and Economics. Jiayi Guo is supported by the National Natural Science Foundation of China [Grant 72101139], Shanghai Sailing Program [Grant 20YF1412200], and the Fundamental Research Funds for the Central Universities.
\medskip

\bibliography{moment}

\begin{thebibliography}{}

\bibitem[Ben-Tal and Teboulle, 2007]{2010bental}
Ben-Tal, A. and Teboulle, M. (2007).
\newblock An old-new concept of convex risk measures: The optimized certainty
  equivalent.
\newblock {\em Mathematical Finance}, 17(3):449--476.

\bibitem[Bertsimas et~al., 2010]{bertsimas2010models}
Bertsimas, D., Doan, X.~V., Natarajan, K., and Teo, C.-P. (2010).
\newblock Models for minimax stochastic linear optimization problems with risk
  aversion.
\newblock {\em Mathematics of Operations Research}, 35(3):580--602.

\bibitem[Bertsimas and Popescu, 2005]{bertsimas2005optimal}
Bertsimas, D. and Popescu, I. (2005).
\newblock Optimal inequalities in probability theory: A convex optimization
  approach.
\newblock {\em SIAM Journal on Optimization}, 15(3):780--804.

\bibitem[Chebyshev, 1874]{cheb}
Chebyshev, P.~L. (1874).
\newblock Sur les valeurs limites des intégrales.
\newblock {\em Imprimerie de Gauthier-Villars}.

\bibitem[Chen et~al., 2011]{chen2011tight}
Chen, L., He, S., and Zhang, S. (2011).
\newblock Tight bounds for some risk measures, with applications to robust
  portfolio selection.
\newblock {\em Operations Research}, 59(4):847--865.

\bibitem[Chen et~al., 2021]{chen2021discrete}
Chen, X., He, S., Jiang, B., Ryan, C.~T., and Zhang, T. (2021).
\newblock The discrete moment problem with nonconvex shape constraints.
\newblock {\em Operations Research}, 69(1):279--296.

\bibitem[Chen et~al., 2019]{chen2019distributionally}
Chen, Z., Sim, M., and Xu, H. (2019).
\newblock Distributionally robust optimization with infinitely constrained
  ambiguity sets.
\newblock {\em Operations Research}, 67(5):1328--1344.

\bibitem[Das et~al., 2021]{das2021heavy}
Das, B., Dhara, A., and Natarajan, K. (2021).
\newblock On the heavy-tail behavior of the distributionally robust newsvendor.
\newblock {\em Operations Research}, 69(4):1077--1099.

\bibitem[Delage and Ye, 2010]{delage2010distributionally}
Delage, E. and Ye, Y. (2010).
\newblock Distributionally robust optimization under moment uncertainty with
  application to data-driven problems.
\newblock {\em Operations Research}, 58(3):595--612.

\bibitem[Dennis and Schnabel, 1983]{Dennis1983}
Dennis, J.~E. and Schnabel, R.~B. (1983).
\newblock {\em Numerical methods for unconstrained optimization and nonlinear
  equations}.
\newblock Prentice-Hall, Englewood Cliffs.

\bibitem[Elmachtoub et~al., 2021]{elmachtoub2021value}
Elmachtoub, A.~N., Gupta, V., and Hamilton, M.~L. (2021).
\newblock The value of personalized pricing.
\newblock {\em Management Science}, 67(10):6055--6070.

\bibitem[Fathony et~al., 2018]{fathony2018distributionally}
Fathony, R., Rezaei, A., Bashiri, M.~A., Zhang, X., and Ziebart, B.~D. (2018).
\newblock Distributionally robust graphical models.
\newblock In {\em Proceedings of the 32nd International Conference on Neural
  Information Processing Systems}, pages 8354--8365.

\bibitem[Ghaoui et~al., 2003]{ghaoui2003worst}
Ghaoui, L.~E., Oks, M., and Oustry, F. (2003).
\newblock Worst-case value-at-risk and robust portfolio optimization: A conic
  programming approach.
\newblock {\em Operations Research}, 51(4):543--556.

\bibitem[Han et~al., 2014]{han2014risk}
Han, Q., Du, D., and Zuluaga, L.~F. (2014).
\newblock A risk-and ambiguity-averse extension of the max-min newsvendor order
  formula.
\newblock {\em Operations Research}, 62(3):535--542.

\bibitem[He et~al., 2010]{he2010bounding}
He, S., Zhang, J., and Zhang, S. (2010).
\newblock Bounding probability of small deviation: A fourth moment approach.
\newblock {\em Mathematics of Operations Research}, 35(1):208--232.

\bibitem[Honorio and Jaakkola, 2014]{honorio2014tight}
Honorio, J. and Jaakkola, T. (2014).
\newblock Tight bounds for the expected risk of linear classifiers and
  {P}ac-{B}ayes finite-sample guarantees.
\newblock In {\em Artificial Intelligence and Statistics}, pages 384--392.
  PMLR.

\bibitem[Kelley, 1995]{Kelley1995}
Kelley, C.~T. (1995).
\newblock Iterative methods for linear and nonlinear equations.
\newblock In {\em Frontiers in Applied Mathematics 16}. SIAM.

\bibitem[Lanckriet et~al., 2002]{lanckriet2002robust}
Lanckriet, G.~R., Ghaoui, L.~E., Bhattacharyya, C., and Jordan, M.~I. (2002).
\newblock A robust minimax approach to classification.
\newblock {\em Journal of Machine Learning Research}, 3(Dec):555--582.

\bibitem[Li, 2018]{li2018closed}
Li, J. Y.-M. (2018).
\newblock Closed-form solutions for worst-case law invariant risk measures with
  application to robust portfolio optimization.
\newblock {\em Operations Research}, 66(6):1533--1541.

\bibitem[Markov, 1884]{markov1884certain}
Markov, A. (1884).
\newblock On certain applications of algebraic continued fractions.
\newblock {\em Unpublished Ph. D. thesis, St Petersburg}.

\bibitem[Mehrotra and Zhang, 2014]{mehrotra2014models}
Mehrotra, S. and Zhang, H. (2014).
\newblock Models and algorithms for distributionally robust least squares
  problems.
\newblock {\em Mathematical Programming}, 146(1):123--141.

\bibitem[More, 1978]{more1978}
More, J.~J. (1978).
\newblock The {L}evenberg-{M}arquardt algorithm: {I}mplementation and theory.
\newblock In {\em Lecture Notes in Mathematics 630: Numerical Analysis}, pages
  105--116. Springer-Verlag.

\bibitem[Natarajan et~al., 2010]{natarajan2010tractable}
Natarajan, K., Sim, M., and Uichanco, J. (2010).
\newblock Tractable robust expected utility and risk models for portfolio
  optimization.
\newblock {\em Mathematical Finance: An International Journal of Mathematics,
  Statistics and Financial Economics}, 20(4):695--731.

\bibitem[Natarajan et~al., 2018]{natarajan2018asymmetry}
Natarajan, K., Sim, M., and Uichanco, J. (2018).
\newblock Asymmetry and ambiguity in newsvendor models.
\newblock {\em Management Science}, 64(7):3146--3167.

\bibitem[Ninh et~al., 2019]{ninh2019robust}
Ninh, A., Hu, H., and Allen, D. (2019).
\newblock Robust newsvendor problems: Effect of discrete demands.
\newblock {\em Annals of Operations Research}, 275(2):607--621.

\bibitem[Ninh and Pr{\'e}kopa, 2013]{ninh2013discrete}
Ninh, A. and Pr{\'e}kopa, A. (2013).
\newblock The discrete moment problem with fractional moments.
\newblock {\em Operations Research Letters}, 41(6):715--718.

\bibitem[Perakis and Roels, 2008]{perakis2008regret}
Perakis, G. and Roels, G. (2008).
\newblock Regret in the newsvendor model with partial information.
\newblock {\em Operations Research}, 56(1):188--203.

\bibitem[Phelps, 2001]{phelps2001lectures}
Phelps, R.~R. (2001).
\newblock {\em Lectures on Choquet's theorem}.
\newblock Springer Science \& Business Media.

\bibitem[Popescu, 2005]{2005sdpconvex}
Popescu, I. (2005).
\newblock A semidefinite programming approach to optimal-moment bounds for
  convex classes of distributions.
\newblock {\em Mathematics of Operations Research}, 30(3):632--657.

\bibitem[Popescu, 2007]{popescu2007robust}
Popescu, I. (2007).
\newblock Robust mean-covariance solutions for stochastic optimization.
\newblock {\em Operations Research}, 55(1):98--112.

\bibitem[Pr{\'e}kopa et~al., 2016]{prekopa2016relationship}
Pr{\'e}kopa, A., Ninh, A., and Alexe, G. (2016).
\newblock On the relationship between the discrete and continuous bounding
  moment problems and their numerical solutions.
\newblock {\em Annals of Operations Research}, 238(1-2):521--575.

\bibitem[Roos et~al., 2021]{roos2021tight}
Roos, E., Brekelmans, R., van Eekelen, W., den Hertog, D., and van Leeuwaarden,
  J.~S. (2021).
\newblock Tight tail probability bounds for distribution-free decision making.
\newblock {\em European Journal of Operational Research}.

\bibitem[Rujeerapaiboon et~al., 2016]{rujeerapaiboon2016robust}
Rujeerapaiboon, N., Kuhn, D., and Wiesemann, W. (2016).
\newblock Robust growth-optimal portfolios.
\newblock {\em Management Science}, 62(7):2090--2109.

\bibitem[Scarf, 1958]{1958A}
Scarf, H. (1958).
\newblock A min-max solution of an inventory problem.
\newblock {\em Studies In The Mathematical Theory of Inventory and Production},
  (201--209).

\bibitem[Smith, 1995]{smith1995generalized}
Smith, J.~E. (1995).
\newblock Generalized {C}hebychev inequalities: {T}heory and applications in
  decision analysis.
\newblock {\em Operations Research}, 43(5):807--825.

\bibitem[Tamuz, 2013]{tamuz2013lower}
Tamuz, O. (2013).
\newblock A lower bound on seller revenue in single buyer monopoly auctions.
\newblock {\em Operations Research Letters}, 41(5):474--476.

\bibitem[Van~Parys et~al., 2016]{van2016generalized}
Van~Parys, B.~P., Goulart, P.~J., and Kuhn, D. (2016).
\newblock Generalized {G}auss inequalities via semidefinite programming.
\newblock {\em Mathematical Programming}, 156(1-2):271--302.

\bibitem[Yue et~al., 2006]{yue2006expected}
Yue, J., Chen, B., and Wang, M.-C. (2006).
\newblock Expected value of distribution information for the newsvendor
  problem.
\newblock {\em Operations Research}, 54(6):1128--1136.

\bibitem[Zuluaga et~al., 2009]{zuluaga2009third}
Zuluaga, L.~F., Pe{\~n}a, J., and Du, D. (2009).
\newblock Third-order extensions of {L}o’s semiparametric bound for
  {E}uropean call options.
\newblock {\em European Journal of Operational Research}, 198(2):557--570.

\bibitem[Zymler et~al., 2013]{zymler2013worst}
Zymler, S., Kuhn, D., and Rustem, B. (2013).
\newblock Worst-case value at risk of nonlinear portfolios.
\newblock {\em Management Science}, 59(1):172--188.

\end{thebibliography}

\ECSwitch
\ECHead{Proofs}
\section{Proofs for Section \ref{Sec:1t}}
\subsection{Proof of Proposition \ref{prop:theta}}
\begin{proof} Since $q>\frac{t-1}{t}M_t^{\frac{1}{t-1}}$ and $M_t > 1$, we have $\frac{t}{t-1}q > M_t >1$.
A straightforward calculation shows that
\begin{eqnarray*}\Theta\left(\frac{t}{t-1}q\right)&=&\frac{(\frac{tq}{t-1})^t-M_t}{\frac{tq}{t-1}-1}\left(1-\frac{(\frac{tq}{t-1})^{t}-\frac{tq}{t-1} M_t}{(\frac{tq}{t-1})^t-M_t}\right)+\left(\frac{(\frac{tq}{t-1})^{t}- \frac{tq}{t-1} M_t}{(\frac{tq}{t-1})^t-M_t}\right)^t-M_t\\
	&=& \frac{-M_t+\frac{tq}{t-1}M_t}{\frac{tq}{t-1}-1}-M_t+\left(\frac{(\frac{tq}{t-1})^{t}- \frac{tq}{t-1} M_t}{(\frac{tq}{t-1})^t-M_t}\right)^t\\
	&=&\left(\frac{(\frac{tq}{t-1})^{t}- \frac{tq}{t-1} M_t}{(\frac{tq}{t-1})^t-M_t}\right)^t>0,
\end{eqnarray*}	
and statement i) follows.	

When $q>M_t^\frac{1}{t-1}$, we let $y_q=\frac{q^t-qM_t}{q^t-M_t} \in (0,1)$, {which is equivalent to} $\frac{q^t-M_t}{q-1}=\frac{M_t}{1-y_q}$. We can express the quantity $\Theta(q)$ as 
\begin{align*}
\Theta(q)&=\frac{M_t}{1-y_q}\left(1-\frac{t}{t-1}y_q\right)+\left(\frac{t}{t-1}y_q\right)^t-M_t\\
&=-\frac{y_q}{1-y_q}\frac{M_t}{t-1}+\left(\frac{t}{t-1}y_q\right)^t\\
&=\left(\frac{t}{t-1}\right)^t\frac{y_q}{1-y_q}\left((1-y_q)y_q^{t-1}-\frac{M_t(t-1)^{t-1}}{t^t}\right).
\end{align*}
Next we define the function $\Theta_1(y)=(1-y)y^{t-1}-\frac{M_t(t-1)^{t-1}}{t^t}$, and we have $\Theta_1'(y)=y^{t-2}[(t-1)-ty]$. Then $\Theta_1(y)$ is increasing whenever $y\in (0,\frac{t-1}{t})$ and decreasing whenever $y \in (\frac{t-1}{t},1)$. Therefore, $\Theta_1(y) \leq \Theta_1(\frac{t-1}{t})=\frac{(t-1)^{t-1}}{t^t}(1-M_t)<0$ for all $y \in (0,1)$. In particular, for $y_q \in (0,1)$, we have $\Theta(q)=\left(\frac{t}{t-1}\right)^t\frac{y_q}{1-y_q}\Theta_1(y_q )<0$, which completes the proof of statement ii).

When $q \le M_t^\frac{1}{t-1}$, it is easy to verify that $\Theta\left(M_t^{\frac{1}{t-1}}\right)=0$.
{Then it remains to} calculate $\Theta'\left(M_t^\frac{1}{t-1}\right)$. To this end, we introduce two functions $\Theta_2(y)=\frac{y^t-M_t}{y-1}$
and $\Theta_3(y)=\frac{tq}{t-1}\frac{y^{t-1}-M_t}{y^t-M_t}$ such that $\Theta(y)=\Theta_2(y)\left(1 - \Theta_3(y)\right)+(\Theta_3(y))^{t}-M_t$. Since $\Theta_2\left(M_t^\frac{1}{t-1}\right)=M_t,\; \Theta_2'\left(M_t^\frac{1}{t-1}\right)=\frac{(t-1)M_t}{M_t^\frac{1}{t-1}-1}, \; \Theta_3\left(M_t^\frac{1}{t-1}\right)=0$, 
and $\Theta_3'\left(M_t^\frac{1}{t-1}\right)=\frac{tq}{M_t^\frac{1}{t-1}\left(M_t^\frac{1}{t-1}-1\right)}$, we obtain
\begin{eqnarray*}
&&\Theta'\left(M_t^\frac{1}{t-1}\right)\\
&=&\Theta_2'\left(M_t^\frac{1}{t-1}\right)\left(1-\Theta_3\left(M_t^\frac{1}{t-1}\right)\right)-\Theta_2(M_t^\frac{1}{t-1})\Theta_3'\left(M_t^\frac{1}{t-1}\right)+t\Theta_3\left(M_t^\frac{1}{t-1}\right)^{t-1}\Theta_3'\left(M_t^\frac{1}{t-1}\right) \\
&=&\frac{M_t}{M_t^\frac{1}{t-1}-1}\left(t-1-\frac{tq}{M_t^\frac{1}{t-1}}\right)<0,
\end{eqnarray*}
with the last inequality due to our assumption that $q>\frac{t-1}{t}M_t^{\frac{1}{t-1}}$. Therefore, statement iii) is also valid.

Based on the above results, we are able to apply the bisection method in Algorithm \ref{Alg: bm} to the function $\Theta(y)$
in the interval $\left( \max\{M_t^{\frac{1}{t-1}},q\},\frac{t}{t-1}q  \right)$ with a given precision $\epsilon$. In particular, when $q>M_t^\frac{1}{t-1}$, the left end point is $q$, and we have 
$\Theta\left(\frac{t}{t-1}q\right)>0$ and $\Theta\left(q\right)<0$. Then, similar to the standard bisection method, Algorithm \ref{Alg: bm}
can find a root $v$  of equation \eqref{Eqn-Theta}
within $\log \left(\frac{q}{\epsilon (t-1)}\right)$ iterations. On the other hand, when $q \le M_t^{\frac{1}{t-1}}$, the left end point is $M_t^{\frac{1}{t-1}}$, and we have 
$\Theta\left(M^{\frac{1}{t-1}}\right)=0$ with $\Theta'\left(M^{\frac{1}{t-1}}\right)<0$ and $\Theta\left(\frac{t}{t-1}q\right)>0$. Therefore, there exists $\delta > M^{\frac{1}{t-1}}$ such that $\Theta(y)<0$ for any $y \in \left(M^{\frac{1}{t-1}}, \delta \right)$. 
Hence, the updated interval after each iteration can still include at least one root of $\Theta(y)$, and 
Algorithm \ref{Alg: bm}  can still terminate
 within $\log \left(\frac{q}{\epsilon (t-1)}\right)$ iterations and return a root $v$  of equation \eqref{Eqn-Theta}.
\end{proof}
\subsection{Proof of Lemma \ref{lemma:two-root}}
\begin{proof}
Suppose that there are two points $u_1, u_2 \in [0, q)$ such that $H(u_1)=H(u_2) =0$. Since $H(x)=h_1(x)-a_1\,x -b_1$ is convex in $[0, q)$, we have, for any $x(\alpha) = \alpha u_1 + (1 - \alpha)u_2$ with $0 \le \alpha \le 1$, 
$0\le H\left(x(\alpha)\right) \le \alpha H(u_1) + (1-\alpha) H(u_2)=0$.
Thus, {we have $h_1(x) = a_1 \,x + b_1$ when $x \in [u_1,u_2]$}, which has a constant derivative $a_1$ {and} contradicts the assumption of a strictly increasing derivative of $h_1(x)$. Hence, there must be at most one root of $H(x)$ in $[0,q)$. 
Finally, by a similar argument, 
the other case (i.e., $h_2(x)$ is continuous at $q$) of conclusion (i)
as well as conclusion (ii)  follow.
\end{proof}
\section{Proofs for Section \ref{sec:P-121}}
\subsection{Proof of Lemma \ref{lm: loc-121}}
\begin{proof}
Since $M_2=\gamma M_1^2$ with $\gamma>1$, the optimal support  $S$ cannot be a singleton. Now consider the following dual constraint:
	$$
	H(x;\bz^*):=z_0^*+z_1^* x+z_2^* x^2+z_3^*(x-1)_+ - (x - 1)^2_+ \le 0,\; \forall \; x \in \mathbb{R}_+
	$$ 
	associated with the dual optimal solution  $\bz^*=(z^*_0,z^*_1,z^*_2,z^*_3)^T$. It is obvious that $z_2^* \le 1$, for otherwise the coefficient of the quadratic term is $z_2^* - 1>0$ for $x \ge 1$ and thus $ H(x;\bz^*) > 0 $ for sufficiently large $x$, which violates the dual constraint.

	According to the complementary slackness condition, the dual constraint must be tight at the {supporting} points of the primal optimal distribution. Therefore, in the following we shall identify the optimal support by searching areas where the dual constraint could be tight.
 
If $z^*_2<0$, $H(x;\bz^*)$ is concave quadratic whenever $x \in [0,1]$ or $x \in [1, +\infty)$, so $H(x;\bz^*)$ has at most one root in $[0,1]$, for otherwise, there would exist two roots $y_1$ and $y_2$ with $0\le y_1 < y_2 \le 1$ such that $H(x;\bz^*) = z^*_2 (x - y_1)(x-y_2) $ for $x \in [0,1]$. Then we would have $H(x;\bz^*)>0$ when $0\le y_1< x < y_2 \le 1$, which violates the dual constraint $H(x;\bz^*) \le 0 $ for $x \in [0,1]$. Since $H(x;\bz^*)$ is also concave quadratic whenever $x \in [1, +\infty)$, a similar argument shows that $H(x;\bz^*)$ has at most one root in $[1, +\infty)$. Suppose that $u$ and $v$ are the  
two roots of $H(x;\bz^*)$ in $[0,1]$ and $[1, +\infty)$, respectively. Then we must have 
$u \neq 1 \neq v$, for otherwise $u$ and $v$ are both included in either $[0,1]$ or $[1, +\infty)$ or the optimal distribution is single-point supported, which is a contradiction.
Therefore,  we have support $S=\{u,v\}$ with $0 \le u < 1 < v$, and prove statement i).

If $0 \leq z^*_2<1$, $H(x;\bz^*)$ is convex quadratic whenever $x \in [0,1]$ and concave quadratic whenever $x \in [1, +\infty)$. {Then, by a similar argument for that i)}, $H(x;\bz^*)$ has at most one root in $[1,+\infty)$. Since $H(x;\bz^*)$ is convex quadratic in $[0,1]$, there exist at most two roots $y_3$ and $y_4$, with $y_3 < y_4$, such that $H(x;\bz^*) = z^*_2 (x - y_3)(x-y_4) $. Then we must have $y_3, y_4 \not\in (0,1)$, for otherwise $H(x;\bz^*) > 0$ in $[0,y_3)$ or in $(y_4, 1]$, which violates the dual constraint $H(x;\bz^*) \le 0$ for $x \in [0,1]$. Therefore, $0,1$, and $v$ with $v>1$ are the only possible roots of $H(x;\bz^*)$, and $1, v$ cannot both be roots, as $H(x;\bz^*)$ has at most one root in $[1,+\infty)$. In addition, $0,1$ cannot constitute all of the roots, for otherwise the support $S = \{0, 1 \}$, which contradicts our assumption that $M_+>0$.
Therefore, $0, v$ with $v>1$ are the only roots of $H(x;\bz^*)$. That is, $S = \{0,v\}$, and statement ii) follows.

If $z^*_2=1$, then $H(x;\bz^*)=(z^*_1+z_3^*+2)x+z_0^*-z^*_3-1 \leq 0$ for all $x \geq 1$. In this case we must have $z^*_1+z_3^*+2 \le 0$, and hence $H(x;\bz^*)$ is decreasing for $x \ge 1$. Consequently, we have $H(x;\bz^*) \leq 0$ for $x \in [1, +\infty)$ as long as $H(x;\bz^*)$ is nonpositive at the left end point, i.e., $H(1;z^*)=z_0^* + z^*_1 +1 \le 0$.
In the other case where $0\le x \le 1$, we have $H(x;\bz^*)=z_0^*+z_1^* x+ x^2$. Thus, $H(x;\bz^*) \le 0$ for $x \in [0,1]$ is ensured by $H(1;\bz^*)\le 0$ and $H(0;\bz^*)=z^*_0 \le 0$. 
Since $\bz^*$ maximizes $z_0 +z_1M_1+z_3M_+$ with $M_+, M_1 >0$ in the objective function, the  {aforementioned} three inequalities on $z^*$ should be tight, i.e.,
\begin{equation} \label{eq-12-dual-opt}
    z^*_1+z_3^*+2 = 0, \quad z_0^* + z^*_1 +1 = 0,\quad z^*_0=0.
\end{equation}
That is, $H(0;\bz^*)=0$ and $H(v;\bz^*)=0$ for any $v\ge 1$. Therefore, the support $S \subseteq \{0\}\bigcup [1,+\infty)$, and statement iii) is proved.
\end{proof}
\subsection{Proof of Lemma \ref{lm: u-v-121}}
\begin{proof}
	Suppose that the optimal solution of (\ref{eq:P-121}) is supported by $\{u,v\}$ with probabilities $p_1^*$ and $p_2^*$, respectively. Then we must have
	\begin{equation}\label{eqn:uvp-121} 
	\left[
	\begin{array}{cc}
	1 & 1  \\
	u & v  \\
	u^2 & v^2 \\
	0 & v-1 \\
	\end{array}
	\right] \left[  
	\begin{array}{c}
	p_1^*\\
	p_2^* \\
	\end{array}
	\right]= \left[  
	\begin{array}{c}
	1\\
	M_1 \\
	\gamma M_1^2 \\
	M_+ \\
	\end{array}
	\right]
	\,\,\,\text{and thus} \,\,\,
	\left[  
	\begin{array}{c}
	u\\
	v\\
	p_1^*\\
	p_2^*\\
	\end{array}
	\right] =  \left[  
	\begin{array}{c}
	M_1\left(1-\frac{(\gamma-1)M_1+\kappa_{M_1,\gamma,M_+}}{2(1-M_1+M_+)}\right) \\
	M_1\left(1+\frac{(\gamma-1)M_1-\kappa_{M_1,\gamma,M_+}}{2M_+}\right) \\
	1-\frac{M_1\left(2M_+(\gamma-1)M_1+\kappa_{M_1,\gamma,M_+}\right)-2M_+}{2(1-2M_1+\gamma M_1^2)}\\
	\frac{M_1\left(2M_+(\gamma-1)M_1+\kappa_{M_1,\gamma,M_+}\right)-2M_+}{2(1-2M_1+\gamma M_1^2)}\\
	\end{array}
	\right],
	\end{equation}
	where $\kappa_{M_1,\gamma,M_+}$ {in \eqref{eqn:kappa-121}} is well defined, because
\begin{align*}
	(\gamma-1)M_1^2+4M_+(M_1-1)-4M_+^2 &=\gamma M_1^2-M_1^2+4M_+(M_1-1-M_+)\\
	& =\mathbb{V}ar(X)+4p_2^*(v-1)\left(p_1^*u+p_2^*v-1-p_2^*(v-1)\right) \\
	& =p_1^*\left(u-(p_1^*u+p_2^*v)\right)^2+p_2^*\left(v-(p_1^*u+p_2^*v)\right)^2+4p_1^*p_2^*(v-1)(u-1)\\
	& =2p_1^*p_2^*(u-v)^2+4p_1^*p_2^*(v-1)(u-1)\\
	&= 2p_1^*p_2^*\left((u-1)^2+(v-1)^2\right) >0.
	\end{align*}
Moreover, $u\ge 0$ implies that
	\begin{eqnarray}\label{u-noneg2-121}
	&&\left(2(1-M_1+M_+)-(\gamma-1)M_1\right)^2-\kappa^2(M_1,\gamma,M_+)\nonumber\\
	&=&\left(2(1-M_1+M_+)-(\gamma-1)M_1\right)^2-(\gamma-1)\left((\gamma-1)M_1^2+4M_+(M_1-1)-4M_+^2\right) \nonumber\\
	&=& (M_+-M_1+1)\left(\gamma (M_+-M_1)+1\right) \ge 0.
	\end{eqnarray}
	Note that $M_+=\mathop{\mathbb{E}}[(X-1)_+]>\mathop{\mathbb{E}}[X-1]=\mathop{\mathbb{E}}[X]-1=M_1-1$, so
	\eqref{u-noneg2-121} further implies that $M_+ \ge M_1-\frac{1}{\gamma}$, and the proof is complete. 
\end{proof}
\subsection{Proof of Lemma \ref{lm: p-z-121}}
\begin{proof} We first construct $p^*=(p_1^*, p_2^*)^T$ in accordance with \eqref{eqn:uvp-121} such that $p_1^*$ and $p_2^*$ are respectively the probabilities of $u = M_1\left(1-\frac{(\gamma-1)M_1+\kappa_{M_1,\gamma,M_+}}{2(1-M_1+M_+)}\right)$ and $v=M_1\left(1+\frac{(\gamma-1)M_1-\kappa_{M_1,\gamma,M_+}}{2M_+}\right)$ in the support of the distribution defined in
\eqref{OptDis:uv-121}. Therefore, $p_1^*, p_2^*, u, v$ satisfy $p_1^*+p_2^*=1$ and $u\, p_1^*+v\, p_2^*=M_1$ from \eqref{eqn:uvp-121}, which gives us $p_2^*=\frac{M_1-u}{v-u}$. Turning to the ranges of $u$ and $v$, 
we note from  Lemma EC.1 in \cite{han2014risk} that the feasibility of problem \eqref{eq:P-121} {implies} that $ M_1 \leq \frac{2}{\gamma}$, and thus 
\begin{equation}\label{u-noneg3-121}
2(1-M_1+M_+)-(\gamma-1)M_1 \geq 2\left(1-\frac{1}{\gamma}\right)-(\gamma-1)M_1 =(\gamma-1)\left(\frac{2}{\gamma}-M_1\right)\ge 0,
\end{equation}
where the first inequality is due to the assumption that $M_1 \le \frac{1}{\gamma} + M_+$.
Moreover, this assumption together with the argument at the end of Lemma \ref{lm: u-v-121} guarantees that \eqref{u-noneg2-121} holds, which combined with \eqref{u-noneg3-121} {yields} that
$0 \le u < M_1$.
In addition, we also have $v > M_1$, as 
\begin{equation}\label{bound:kappa}
(\gamma-1)M_1>\sqrt{(\gamma-1)\left((\gamma-1)M_1^2+4M_+(M_1-1-M_+)\right)}=\kappa_{M_1,\gamma,M_+}.
\end{equation}
Therefore,  $p_2^*=\frac{M_1-u}{v-u} \in (0,1)$, and $p_1^* \in (0,1) $ as well. That is, $\bp^*$ is a primal feasible solution of \eqref{eq:P-121}.
Next we construct the dual solution $\bz^*=(z^*_0,z^*_1,z^*_2,z_3^*)^T$ satisfying
\begin{equation}\label{eqn:z-uv-121} 
\left[  
\begin{array}{c}
z^*_0\\
z^*_1 \\
z^*_2\\
z^*_3\\
\end{array}
\right] =  \left[  
\begin{array}{c}
-\frac{1}{2}\left( \left(2M_+-M_1\right)M_1+\frac{M_1\left(2(\gamma-2)M_+^2+(\gamma-1)M_1^2-2M_+(1+(\gamma-2)M_1)\right)}{\kappa_{M_1,\gamma,M_+}}\right)\\
M_+-M_1-\frac{2M_+^2-(\gamma-1)M_1^2+M_+\left(2+(\gamma-3)M_1\right) }{\kappa_{M_1,\gamma,M_+}}\\
-\frac{1}{2}\left(\frac{(\gamma-1)M_1^2+2M_+(M_1-1)-2M_+^2}{M_1\, \kappa_{M_1,\gamma,M_+}}-1\right) \\
(M_1-1)-\frac{(\gamma-1)M_1(M_1-1-2M_+)}{\kappa_{M_1,\gamma,M_+}}\\
\end{array}
\right],
\end{equation}
which is the solution to the linear system
\begin{equation}\label{eqn:uvz-121} 
\left[
\begin{array}{cccc}
1 & u & u^2 & 0  \\
1 & v & v^2 & v-1 \\
0 & 1 & 2u  & 0 \\
0 & 1 & 2v  & 1 \\
\end{array}
\right] \left[  
\begin{array}{c}
z^*_0\\
z^*_1 \\
z^*_2\\
z^*_3\\
\end{array}
\right] = \left[  
\begin{array}{c}
0\\
(v-1)^2 \\
0 \\
2(v-1) \\
\end{array}
\right]
\end{equation}
from \eqref{eq:cs} and \eqref{eq:Deriv}.
To verify the dual feasibility of $\bz^*$,
recalling that we have proved $0\le u < M_1 < v$, we must have $u < 1$, for otherwise $M_+ =\mathop{\mathbb{E}}[(X-1)_+]=\mathop{\mathbb{E}}[X-1]=M_1-1 $ holds, which contradicts the assumption that $M_1 \le \frac{1}{\gamma} + M_+ < 1 + M_+$. In addition, the assumption $M_+>0$ implies that $v>1$. Those facts together with \eqref{eqn:uvz-121} indicate the following dual constraint function:
$$	H(x;\bz^*)=\begin{cases} z_0^*+z_1^*x+z_2^* x^2=z_2^*(x-u)^2, & \mbox{if}\;\; x \in [0,1) \\
z_0^*+z_1^*x+z_2^* x^2+z_3^*(x-1)-(x-1)^2=(z^*_2-1)(x-v)^2, & \mbox{if}\;\; x \in [1,+\infty) \end{cases}.$$
Moreover, the expression of $z_2^*$ in \eqref{eqn:z-uv-121} gives us 
\begin{eqnarray*}
z_2^*&=&-\frac{1}{2}\left(\frac{(\gamma-1)M_1^2+2M_+(M_1-1)-2M_+^2}{M_1\, \kappa_{M_1,\gamma,M_+}}-1\right)\\
&=&-\frac{1}{4} \frac{(\gamma-1)M_1^2+4M_+(M_1-1)-4M_+^2 + (\gamma-1)M_1^2 - 2M_1\, \kappa_{M_1,\gamma,M_+ }}{M_1\, \kappa_{M_1,\gamma,M_+}}\\
&=&-\frac{1}{4}\frac{\frac{\kappa_{M_1,\gamma,M_+}^2}{\gamma-1}+ (\gamma-1)M_1^2 - 2M_1\, \kappa_{M_1,\gamma,M_+ }}{M_1\, \kappa_{M_1,\gamma,M_+}} = -\frac{1}{4}\frac{\left(\kappa_{M_1,\gamma,M_+}- (\gamma-1)M_1\right)^2}{(\gamma-1)M_1\, \kappa_{M_1,\gamma,M_+}} <0,
\end{eqnarray*}
where the last inequality is due to \eqref{bound:kappa}. Therefore, $H(x;\bz^*)\le 0$ for all $x \in [0,+\infty)$, and $\bz^*$ is a dual feasible solution. In addition, $(\bp^*, \bz^*)$ also satisfies the complementary slackness condition
due to \eqref{eqn:uvz-121}, and thus we conclude that $(\bp^*, \bz^*)$ is an optimal primal-dual solution pair such that the distribution defined in \eqref{OptDis:uv-121} is optimal for \eqref{eq:P-121}.
\end{proof}
\subsection{Proof of Lemma \ref{lm: u-v-121-2}}
\begin{proof} For any feasible support $\{0,v_1,v_2,...,v_d\}$ of problem \eqref{eq:P-121} with $v_i \ge 1$ for all $1 \le i \le d$, let $p_0$ be the probability of $0$ and $p_i$ {be} the probability of $v_i$ in the support. Then the following hold:
\begin{equation*}
\sum_{i=0}^d p_i=1,\quad 0 \times p_0 +\sum_{i=1}^d p_i v_i=M_1,\quad
0 \times p_0 +\sum_{i=1}^d p_i v_i^2=\gamma M_1^2,\quad \sum_{i=1}^d p_i (v_i-1)=M_+.
\end{equation*}	 
Therefore, the associated objective value is
\begin{equation*}\sum_{i=1}^d p_i (v_i-1)^2 -M_+^2=\sum_{i=1}^d p_i v_i^2 - \sum_{i=1}^d p_i v_i  -\sum_{i=1}^d p_i (v_i-1) - M_+^2 =\gamma M_1^2-M_1-M_+-M_+^2,
\end{equation*}	
which is a constant. Since the optimal support $S\subseteq \{0\}\bigcup [1,+\infty)$, we conclude that any feasible support $\{0,v_1,v_2,...,v_d\}$ with $v_i \ge 1$ for all $1 \le i \le d$ has the same objective and thus is optimal.
In addition, a straightforward computation shows that
 \begin{equation*}
 \gamma (M_1-M_+) = \frac{\left(\sum_{i=1}^d p_i v_i^2\right)}{M_1^2}\left(\sum_{i=1}^d p_i v_i - \sum_{i=1}^d p_i (v_i-1)\right) =\frac{(\sum_{i=1}^d p_i v_i^2)(\sum_{i=1}^d p_i)}{(\sum_{i=1}^d p_iv_i)^2} \ge 1.
 \end{equation*}
The equality holds only when $v_1=v_2=...=v_d$, by the Cauchy-Schwarz inequality, and so the support reduces to $S=\{0,v\}$ with $v> 1$, {which is} the case discussed in Lemma \ref{lm: u-v-121}. Therefore, we consider the optimal support to include three points: $0,v_1,v_2$, with probabilities $p_0^*$, $p_1^*$, and $p_2^*$, respectively.
We solve the equation
\begin{equation} \label{vp-121}
\left[
\begin{array}{ccc}
1 & 1 & 1  \\
0 & v_1 & v_2  \\
0 & v_1^2 & v_2^2 \\
0 & v_1-1 & v_2-1 \\
\end{array}
\right] \left[  
\begin{array}{c}
p_1^*\\
p_2^* \\
p_3^*\\
\end{array}
\right]= \left[  
\begin{array}{c}
1\\
M_1 \\
\gamma M_1^2 \\
M_+ \\
\end{array}
\right],\quad\mbox{and obtain}\quad\left[  
\begin{array}{c}
v_2\\
p_0^*\\
p_1^*\\
p_2^*\\
\end{array}
\right]=
\left[  
\begin{array}{c}
\frac{M_1v_1-\gamma M_1^2}{(M_1-M_+)v_1-M_1}\\
1-M_1+M_+\\
\frac{M_1^2(\gamma M_1-\gamma M_+ -1)}{\gamma M_1^2 -2M_1 v_1+M_1v_1^2-M_+v_1^2}\\
\frac{(M_1v_1-M_+v_1-M_1)^2}{\gamma M_1^2 -2M_1 v_1+M_1v_1^2-M_+v_1^2}\\
\end{array}
\right].
\end{equation}
In this case, we must have $ \gamma (M_1-M_+) > 1$ and $v_2 = \frac{M_1v_1-\gamma M_1^2}{(M_1-M_+)v_1-M_1} \ge 1$, which is equivalent to $v_1 \ge \frac{\gamma M_1^2-M_1}{M_+}$. {
 As a result, we obtain} $v_1 \ge \max\left\{1, \frac{\gamma M_1^2-M_1}{M_+}\right\}$, {which completes} the proof.
\end{proof}
\subsection{Proof of Lemma \ref{lm: p-z-121-2}}
\begin{proof}	
We construct $\bp^*=(p_0^*, p_1^*, p_2^*)^T$ in accordance with \eqref{vp-121}, such that $p_0^*$, $p_1^*$, and $p_2^*$ are respectively the probabilities of $0$, $v_1$, and $v_2=\frac{M_1v_1-\gamma M_1^2}{(M_1-M_+)v_1-M_1} $ in the support of the distribution defined in
\eqref{OptDis:0v2-121}. Since we have proved that $M_1 > \frac{1}{\gamma}+M_+$ and $M_+>M_1-1$ in Lemma \ref{lm: u-v-121}, we have $0 < p_0^*<1$.
Moreover, an equivalent reformulation of $p_1^*$ shows that
$$0<p_1^* = \frac{1}{\frac{M_1-M_+}{M_1^2(\gamma M_1-\gamma M_+ -1)}\left(v_1-\frac{M_1}{M_1-M_+}\right)^2+\frac{1}{M_1-M_+}} < M_1-M_+<1,
$$
which together with $p_1^*+p_2^* = M_1-M_+$ implies that $0<p_2^*< M_1-M_+<1$ as well. Therefore, $\bp^*$ is a primal feasible solution of \eqref{eq:P-121}.
Next, we construct $\bz^*=(0,-1,1,-1)^T$ by solving \eqref{eq-12-dual-opt} and letting $z_2^*=1$. The associated dual function is
$$	H(x;\bz^*)=\begin{cases} x^2 - x \le 0, & \mbox{if}\;\; x \in [0,1) \\
0, & \mbox{if}\;\; x \in [1,+\infty) \end{cases}.$$
Consequently, $\bz^*$ is a dual feasible solution. In addition, recall that $v_1 \ge 1$, and {observe that $v_2 =\frac{M_1v_1-\gamma M_1^2}{(M_1-M_+)v_1-M_1}\ge 1$, as $v_1 \ge \frac{\gamma M_1^2-M_1}{M_+}$}. Therefore, $H(0;\bz^*)=H(v_1;\bz^*)=H(v_2;\bz^*)=0$, and the complementary slackness condition \eqref{eq:cs} holds. As a result, $(\bp^*, \bz^*)$ is an optimal primal-dual pair such that the distribution defined in \eqref{OptDis:0v2-121} is optimal for \eqref{eq:P-121}.
\end{proof}
\section{Proofs for Section \ref{sec:P-1et}}
\subsection{Proof of Proposition \ref{prop:phi}}
\begin{proof}
We start with definition \eqref{eqn-v1-1et} of $v_1$, which can be rewritten as
 $-v_1-\frac{M_1}{M_e-1}=W_{-1}(\frac{-M_1}{M_e-1}e^{-\frac{M_1}{M_e-1}})$. According to the definition of the Lambert W function, we have
$(-v_1-\frac{M_1}{M_e-1})e^{-v_1-\frac{M_1}{M_e-1}}=-\frac{M_1}{M_e-1}e^{-\frac{M_1}{M_e-1}}$, and with some simplification we obtain the equivalent formulation
\begin{equation} \label{1et:v1}
e^{v_1}=\frac{M_e-1}{M_1}v_1+1.    
\end{equation}
Noting that $M_e - M_1 -1 > e^{M_1}-M_1-1 >0$, as we assume $M_e > e^{M_1}$ and $e^{M_1}>0$, we obtain 
\begin{equation}\label{eqn:M1-Me-1}
-\frac{M_1}{M_e - 1}>-1.
\end{equation}
{Then} the strictly increasing property of $xe^{x}$ implies
$-\frac{1}{e}<\frac{-M_1}{M_e-1}e^{-\frac{M_1}{M_e-1}}<0$. Consequently, 
\begin{equation}\label{W-inverse-1}
    W_{-1}\left(\frac{-M_1}{M_e-1}e^{-\frac{M_1}{M_e-1}}\right)<W_{-1}\left(-\frac{1}{e}\right)=-1
\end{equation}
due to the Lambert W function $W_{-1}(\cdot)$ being strictly decreasing in $[-\frac{1}{e},0)$. Therefore, we have
\begin{align*}
\Phi(0)&=\frac{M_e-1}{M_1}(1+q-M_1)-1-e^{q+1-\frac{M_1}{M_e-1}}+M_e\\
&=\frac{M_e-1}{M_1}(1+q)-e^{q+1-\frac{M_1}{M_e-1}}\\
&=\frac{M_e-1}{M_1}e^{q+1}\left(\frac{1+q}{e^{1+q}} -\frac{\frac{M_1}{M_e-1}e^{v_1}}{e^{\frac{M_1}{M_e-1}+v_1}} \right)\\ &\overset{\eqref{1et:v1}}=\frac{M_e-1}{M_1}e^{q+1}\left(\frac{1+q}{e^{1+q}} -\frac{v_1+\frac{M_1}{M_e-1}}{e^{v_1+\frac{M_1}{M_e-1}}} \right)<0,
\end{align*}
where the last inequality is due to $\frac{y}{e^y}$ being a decreasing function in $(1,+\infty)$ and the assumption that $1+q>v_1+ \frac{M_1}{M_e-1}\overset{\eqref{eqn-v1-1et}}=-W_{-1}\left(\frac{-M_1}{M_e-1}e^{-\frac{M_1}{M_e-1}}\right)\overset{\eqref{W-inverse-1}}>1$. This proves statement i).\\
Now we consider the case $q< M_1$, for which we have
\begin{align*}
\Phi(q) & =\frac{M_e-e^q}{M_1-q}-e^{q+1-\frac{M_1-q}{M_e-e^q}e^q} \\
&=\frac{e(M_e-e^q)}{M_1-q}\left(e^{-1}-\frac{M_1-q}{M_e-e^q}e^{q-\frac{M_1-q}{M_e-e^q} e^q}\right)\\
&=\frac{e(M_e-e^q)}{M_1-q}\left(\left(-\frac{M_1-q}{M_e-e^q} e^q\right) e^{-\frac{M_1-q}{M_e-e^q} e^q}-(-1)e^{-1}\right).
\end{align*}
Since $e^y-y$ is a strictly increasing function for $y\geq 0$, we have $e^q-q < e^{M_1}-M_1< M_e-M_1$, which implies that $M_e-e^q> M_1-q> 0$. Therefore, $\Phi(q)$ is well defined.
{To further} prove $\Phi(q)>0$, it suffices to show that
\begin{equation}\label{x-ex>0}
\left(-\frac{M_1-q}{M_e-e^q} e^q\right) e^{-\frac{M_1-q}{M_e-e^q} e^q}-(-1)e^{-1}> 0.
\end{equation}
We first note that $e^{M_1-q} - 1> M_1-q$, as
 $e^x -1 > x$ is strictly increasing for $x>0$. 
 Multiplying both sides of this inequality by $e^q$ and recalling that $M_e> e^{M_1}$, we conclude that 
 $M_e-e^q> e^{M_1} -e^q > (M_1-q)e^q>0$, which further implies
$-\frac{M_1-q}{M_e-e^q}e^q> -1$. Therefore, \eqref{x-ex>0} follows from the observation that $x e^{x}$ is a strictly increasing function when $x\geq -1$, and hence {statement ii) is proved}.\\
Turning to the case $M_1 \leq q$, since $M_e>e^{M_1}$, we have
$$\lim_{y\uparrow M_1}\Phi(y)=\lim_{y\uparrow M_1} \left(\frac{M_e-e^y}{M_1-y}(q+1-M_1)-e^y-{e^{q+1-e^{y}\frac{M_1-y}{M_e-e^{y}}}+M_e} \right)= +\infty,$$
which completes the proof of statement iii).\\
Therefore, the function $\Phi(\cdot)$ has opposite signs at the two end points of the interval $[0, \min\{q, M_1\}]$, and the bisection method can find a root of $\Phi(\cdot)$  within $\log \left(\frac{\min\{q,M_1\}}{\epsilon }\right)$ iterations for a given precision $\epsilon$.
\end{proof}
\subsection{Proof of Lemma \ref{lm:loc-1et}}
\begin{proof} First, the optimal distribution of \eqref{eq:P-1et} cannot be supported by a single point, as we assume that $M_e > e^{M_1}$. Then the rest of the proof is similar to the proof of Lemma  \ref{lm:loc-1t}, except that the dual constraint is changed to
\begin{equation}\label{Func:H-1et-u-v}
H_{1e}(x;\bz^*):=z^*_0+z^*_1 x+z^*_e e^{x} - (x - q)_+ \ge 0,\; \forall \; x \in \mathbb{R}_+,
\end{equation}
{where} $\bz^*=(z^*_0,z^*_1,z^*_e)^{T}$ is the optimal solution of the dual problem \eqref{eq:D-1et}.
We still have $z^*_e \ge 0$, for otherwise $H_{1e}(x;\bz^*)<0$ for sufficiently large $x$,  contradicting the feasibility of $\bz^*$. When $z^*_e=0$, the dual constraint is identical to that of \eqref{eq:D-1t} with $z^*_t=0$, and hence the proof in Lemma \ref{lm:loc-1t} can be applied. Moreover, when $z^*_e>0$, the function $z^*_0+z^*_1 x+z^*_e e^{x}$ is still convex in $x$ and has a strictly positive derivative. Therefore, the conclusion follows by an argument similar to Lemma \ref{lm:loc-1t} for the case $z^*_t>0$.
\end{proof}
\subsection{Proof of Lemma \ref{lm:exp-0v}}
\begin{proof} Suppose that the optimal distribution of (\ref{eq:P-1et}) is supported by $\{0,v_1\}$ with probabilities $p_1^*$ and $p_2^*$, respectively, and that $\bz^*=(z^*_0,z^*_1,z^*_e)^T$ is an optimal solution of the dual problem (\ref{eq:D-1et}) with the constraint function $H_{1e}(\cdot)$ defined in \eqref{Func:H-1et-u-v}.
In this case, the condition \eqref{eq:Deriv} gives us ${H}_{1e}'(v_1;\bz^*)= z_1^* +  z_e^*e^{v_1}  -1 =0 $. According to Theorem \ref{tm: gpm},
$\bz^*$ and $\bp^* = (p_1^*, p_2^*)^T$ satisfy conditions \eqref{eq:primal}, \eqref{eq:cs}, and \eqref{eq:Deriv}, which together are equivalent to
\begin{equation} \label{Cond:newsvendor-1et}
\left[
\begin{array}{cc}
1 & 1   \\
0 & v_1  \\
1 & e^{v_1}\\ 
\end{array}
\right] \left[  
\begin{array}{c}
p_1^*\\
p_2^* \\
\end{array}
\right] =  \left[  
\begin{array}{c}
1\\
M_1 \\
M_e \\
\end{array}
\right]
\,\,\, \text{and  } \,\,\,
\left[
\begin{array}{ccc}
1 & 0 & 1  \\
1 & v_1 & e^{v_1} \\
0 & 1 & e^{v_1} 
\end{array}
\right] \left[  
\begin{array}{c}
z^*_0\\
z^*_1 \\
z^*_e\\
\end{array}
\right] =  \left[  
\begin{array}{c}
0\\
v_1-q \\
1 \\
\end{array}
\right].
\end{equation}
Solving the above equations gives us
\begin{equation}\label{Sol:1et}   
\frac{e^{v_1}-1}{v_1}=\frac{M_e-1}{M_1}, \quad\;
\left[  
\begin{array}{c}
p_1^* \\
p_2^* \\
\end{array}
\right]= \left[  
\begin{array}{c}
1-\frac{M_1}{v_1}\\
\frac{M_1}{v_1}\\
\end{array}
\right],
\,\,\, \text{and } \,\,\, 
\left[  
\begin{array}{c}
z^*_0\\
z^*_1 \\
z^*_e\\
\end{array}
\right]= \left[  
\begin{array}{c}
-\frac{q}{e^{v_1}(v_1-1)+1}\\
\frac{e^{v_1}(v_1-1-q)+1}{e^{v_1}(v_1-1)+1}\\
\frac{q}{e^{v_1}(v_1-1)+1}
\end{array}
\right]. 
\end{equation}
Note that the first equality above is equivalent to $e^{v_1} = \frac{M_e -1}{M_1} v_1 + 1$ and that
$$\left(-v_1-\frac{M_1}{M_e-1}\right)e^{-v_1-\frac{M_1}{M_e-1}}=-\frac{v_1+\frac{M_1}{M_e-1}}{e^{v_1+\frac{M_1}{M_e-1}}}=-\frac{v_1+\frac{M_1}{M_e-1}}{\left(\frac{M_e-1}{M_1}v_1+1\right)e^{\frac{M_1}{M_e-1}}}=-\frac{M_1}{M_e-1}e^{-\frac{M_1}{M_e-1}},$$
so $v_1=-\frac{M_1}{M_e-1}-W_{0}\left(\frac{-M_1}{M_e-1}e^{-\frac{M_1}{M_e-1}}\right)=0$ or $v_1=-\frac{M_1}{M_e-1}-W_{-1}\left(\frac{-M_1}{M_e-1}e^{-\frac{M_1}{M_e-1}}\right)$, where $W_{0}$ and $W_{-1}$ are the two branches of the Lambert W function. As our assumption indicates that $v_1>q>0$, we must have $v_1=-W_{-1}\left(\frac{-M_1}{M_e-1}e^{-\frac{M_1}{M_e-1}}\right)-\frac{M_1}{M_e-1}$, i.e., \eqref{eqn-v1-1et} holds.
In addition, as the second system of equations in \eqref{Cond:newsvendor-1et} implies $H_{1e}(0;\bz^*)=z_0^*+z_e^*=0$, the dual feasibility of $\bz^*$ requires $H_{1e}'(0;\bz^*)=z_1^*+ z_e^*=\frac{e^{v_1}(v_1-1-q)+1+q}{e^{v_1}(v_1-1)+1} \geq 0$. Note that the denominator 
\begin{equation}\label{eqn1:lm:exp-0v}
e^{v_1}(v_1-1)+1=e\left(e^{v_1-1}(v_1-1)\right)+1> e\times (-\frac{1}{e})+1= 0,    
\end{equation}
as $v_1>0$ and $x e^x$ is a strictly increasing function for $x\geq -1$.
Therefore, $H_{1e}'(0;\bz^*) \geq 0$ implies $e^{v_1}(v_1-1-q)+1+q \geq 0$, or equivalently 
$$q \leq \frac{v_1e^{v_1}}{e^{v_1}-1} -1= v_1+\frac{v_1}{e^{v_1}-1}-1\overset{\eqref{1et:v1}}=v_1+\frac{M_1}{M_e-1}-1.$$
\end{proof}
\subsection{Proof of Lemma \ref{lm:exp 0v-2}}
\begin{proof}
We construct 
$\bp^*=(p^*_1, p^*_2)^T$ and $\bz^*=(z^*_0, z^*_1, z^*_e)^T$ in accordance with \eqref{Sol:1et}, where $p^*_1$ and $p^*_2$ correspond to the probabilities associated with the support points $0$ and $v_1$ of the distribution described in \eqref{opt-disn-1et-a}. We need to show that $(\bp^*,\bz^*)$ is the optimal primal-dual solution pair for problems \eqref{eq:P-1et} and \eqref{eq:D-1et}.
First, we observe that $\frac{e^{M_1}-1}{M_1}<\frac{M_e-1}{M_1}\overset{\eqref{1et:v1}}=\frac{e^{v_1}-1}{v_1}$ due to the assumption $M_e>e^{M_1}$. {Since} the function
$\frac{e^y-1}{y}$
is strictly increasing whenever $y>0$, {we conclude} that $v_1>M_1$ and $\bp^*$ is a primal feasible solution.
Next, we verify the dual feasibility of $\bz^*$. According to the expression of $z^*_e$ in \eqref{Sol:1et} and {the inequality} \eqref{eqn1:lm:exp-0v}, we have $z^*_e>0$ {and that} $H_{1e}(x;\bz^*)$ defined in \eqref{Func:H-1et-u-v} is convex in $[0,q)$ and {in} $(q,+\infty)$, as ${H}_{1e}''(x;\bz^*)=z_e^*e^{x}>0$ for any $x \ge 0$ and $x \neq q$. Moreover, we have $H_{1e}'(0;\bz^*) \ge 0$ by the assumption that $q \leq v_1+\frac{M_1}{M_e-1}-1$ and the {argument} at the end of Lemma \ref{lm:exp-0v}. We also note $H_{1e}(0;\bz^*)=H_{1e}(v_1;\bz^*)=H_{1e}'(v_1;\bz^*)=0$ by the second system of equations in \eqref{Cond:newsvendor-1et}. It follows that
$$
{H}_{1e}(x;\bz^*) \ge \left\{\begin{array}{cl}
     H_{1e}(v_1;\bz^*) + H_{1e}'(v_1;\bz^*)(x-v_1)  = 0,& \mbox{if}\;\; x \in (q,+\infty) \\
     H_{1e}(0;\bz^*)+H_{1e}'(0;\bz^*)(x-0) \geq 0, & \mbox{if}\;\; x \in [0,q)
\end{array}
\right. ,
$$
{where we use the fact $v_1 \ge q + 1 - \frac{M_1}{M_e - 1} > q$ from \eqref{eqn:M1-Me-1}. }
Furthermore, since $H_{1e}(x;\bz^*)$ is a continuous function, $H_{1e}(q;\bz^*) \ge 0$ as well, and thus $z^*$ is a feasible solution for the dual problem \eqref{eq:D-1et}. 
In addition, $(\bp^*, \bz^*)$ also satisfies the complementary slackness condition due to the second system of equations in \eqref{Cond:newsvendor-1et}, and thus we conclude that $(\bp^*, \bz^*)$ is an optimal primal-dual solution pair. 
\end{proof}
\subsection{Proof of Lemma \ref{lm:exp uv-1}}
\begin{proof}
Suppose that the optimal distribution of (\ref{eq:P-1et}) is supported by $\{u,v_2\}$ with probabilities $p_1^*$ and $p_2^*$, respectively,
and that $\bz^*=(z^*_0,z^*_1,z^*_t)^T$ is an optimal solution of the dual problem (\ref{eq:D-1et}).
In this case, the condition \eqref{eq:Deriv} gives us ${H}_{1e}'(u;\bz^*)= z_1^* +  e^{u}  z_e^* =0 $ and ${H}_{1e}'(v_2;\bz^*)= z_1^* +  e^{v_2}  z_t^*-1 =0 $. According to Theorem \ref{tm: gpm},
$\bz^*$ and $\bp^* = (p_1^*, p_2^*)^T$ satisfy the conditions \eqref{eq:primal}, \eqref{eq:cs}, and \eqref{eq:Deriv}, which together are equivalent to
\begin{equation}  \label{eq:1-et large}
\left[
\begin{array}{cc}
1 & 1  \\
u & v_2  \\
e^{u} & e^{v_2} \\
\end{array}
\right] \left[  
\begin{array}{c}
p_1^*\\
p_2^* \\
\end{array}
\right]= \left[  
\begin{array}{c}
1\\
M_1 \\
M_e \\
\end{array}
\right]
\,\,\,\text{and } \,\,\,
\left[
\begin{array}{ccc}
1 & u & e^{u}  \\
1 & v_2 & e^{v_2} \\
0 & 1 & e^{u} \\
0 & 1 & e^{v_2} \\
\end{array}
\right] \left[  
\begin{array}{c}
z^*_0\\
z^*_1 \\
z^*_e\\
\end{array}
\right] = \left[  
\begin{array}{c}
0\\
v_2-q \\
0 \\
1 \\
\end{array}
\right].
\end{equation}
The first system of equations in \eqref{eq:1-et large} has a solution if and only if
\begin{eqnarray*}
	0 = \left|
	\begin{array}{ccc}
1 & 1  & 1\\
u & v_2  & M_1\\
e^{u} & e^{v_2} & M_e\\
	\end{array}
	\right|  \overset{\rm{row2-M_1 \cdot row1,  row3- M_e \cdot row1 }}= \left|
	\begin{array}{ccc}
1 & 1  & 1\\
u-M_1 & v_2-M_1  & 0\\
e^{u}-M_e & e^{v_2}-M_e & 0\\
	\end{array}
	\right|= \left|
	\begin{array}{cc}
u-M_1 & v_2-M_1  \\
e^{u}-M_e & e^{v_2}-M_e \\
	\end{array}
	\right|. \\
\end{eqnarray*}
As a result,
\begin{align}\label{eq:u-v-1et}
\frac{M_e-e^{u}}{M_1-u}=\frac{e^{v_2}-M_e}{v_2-M_1}=\frac{e^{v_2}-e^{u}}{v_2-u}.
\end{align}
By a similar argument, the feasibility of the second system of equations in \eqref{eq:1-et large} gives us
\begin{eqnarray}
	  0&=&\left|
	\begin{array}{cccc}
		1 & u & e^{u}  & 0\\
		1 & v_2 & e^{v_2}  & v_2-q\\
		0 & 1 &  e^{u}  & 0\\
		0 & 1 &  e^{v_2} & 1\\
	\end{array}
	\right|  \overset{\rm{row2-(v_2-q)\cdot row4}}= \left|
	\begin{array}{cccc}
1 & u & e^{u}  & 0\\
1 & q & e^{v_2}-(v_2-q) e^{v_2}  & 0\\
0 & 1 & e^{u}  & 0\\
0 & 1 & e^{v_2} & 1\\
	\end{array}
	\right|
	= \left|
	\begin{array}{ccc}
1 & u & e^{u}  \\
1 & q & e^{v_2}-(v_2-q) e^{v_2}  \\
0 & 1 & e^{u}  \\
	\end{array}
	\right| \nonumber\\
	& \overset{\rm{row2- row1}}= &\left|
	\begin{array}{ccc}
1 & u & e^{u}  \\
0 & q-u & e^{v_2}-e^{u}-(v_2-q) e^{v_2}  \\
0 & 1 & e^{u}  \\
	\end{array}
	\right| 
	 =
	\left|
	\begin{array}{cc}
 q-u & e^{v_2}-e^{u}-(v_2-q) e^{v_2}  \\
 1 & e^{u}  \\
	\end{array}
	\right|, \nonumber
\end{eqnarray}
 which further implies that $(q-u) e^{u} =e^{v_2}-e^{u}-(v_2-q) e^{v_2}$, or equivalently 
 \begin{equation} \label{eq:u-v-2et}   
  \frac{v_2e^{v_2}-ue^{u}}{e^{v_2}-e^{u}}=q+1. 
 \end{equation}
Consequently, we have 
$$q+1-v_2= \frac{v_2e^{v_2}-ue^{u}}{e^{v_2}-e^{u}}-v_2=e^{u}\frac{v_2-u}{e^{v_2}-e^{u}}\overset{\eqref{eq:u-v-1et}}=e^{u}\frac{M_1-u}{M_e-e^{u}}.$$
Therefore, 
$v_2=q+1-e^{u}\frac{M_1-u}{M_e-e^{u}}$, and we can substitute this expression of $v_2$ in $ \frac{M_e-e^u}{M_1-u}(v_2-M_1)-(e^{v_2}-M_e)$=0, which is a reformulation of \eqref{eq:u-v-1et}. Consequently, $u$ is a root of $\Phi(y)=0$.
Finally, we want to show that 
\begin{equation} \label{1et:divde2}
   q \overset{\eqref{eq:u-v-2et}}=\frac{v_2e^{v_2}-ue^{u}}{e^{v_2}-e^{u}} - 1>   \frac{v_1 e^{v_1}}{e^{v_1} -1} - 1 =v_1+\frac{v_1}{e^{v_1}-1}-1\overset{\eqref{1et:v1}}=v_1+\frac{M_1}{M_e-1}-1,
\end{equation}
and the key is to prove that the middle inequality holds.
To this end, we consider the function $\Phi_1(x, y)=\frac{xe^x-y e^y}{e^x-e^y}-1$, and for $x>y$ it is easy to verify that 
$$ \frac{\partial \Phi_1(x,y)}{\partial y}=\frac{e^{2y}}{(e^x-e^y)^2}\left((x-y-1)e^{x-y}+1\right)=\frac{e^{2y}}{(e^x-e^y)^2}\left(\sum_{i=1}^\infty \left(\frac{1}{i!}-\frac{1}{(i-1)!}\right)(x-y)^i\right)>0.$$
Hence, $\Phi_1(x, y)$ is strictly increasing with respect to $y$ if $x\ge y$, and {consequently} $\Phi_1(v_2, u) > \Phi_1(v_2, 0)$.
Moreover, $\Phi_1(x,0)$ is increasing with respect to $x$ on $(0,+\infty)$, as $\frac{\partial \Phi_1(x,0)}{\partial x}=\frac{e^x(e^x-x-1)}{(e^x-1)^2}>0$ when $x>0$. Therefore, it remains to show that $v_2 > v_1$, which gives $$\frac{v_2e^{v_2}-ue^{u}}{e^{v_2}-e^{u}} - 1 = \Phi_1(v_2,u)>\Phi_1(v_2,0)>\Phi_1(v_1,0)= \frac{v_1e^{v_1}}{e^{v_1}-1} - 1,$$ 
and the middle inequality in \eqref{1et:divde2} holds.
Hence, we construct the functions
\begin{equation}\label{eqn:phi-4-5}\Phi_2(y)=\frac{M_e-e^y}{M_1-y} \quad \mbox{and}\quad \Phi_3(y)=M_e-e^y(M_1-y+1).
\end{equation}
Noting that 
$\Phi_3(y)$ is decreasing on $[0,M_1]$ since $\Phi'_3(y)=e^y(y-M_1)\le 0$,
we have
$$\Phi'_2(y)=\frac{\Phi_3(y)}{(M_1-y)^2}\ge\frac{\Phi_3(M_1)}{(M_1-y)^2}=\frac{M_e - e^{M_1}}{(M_1-y)^2} >0,\;\mbox{when}\; 0\le y < M_1.$$
 Therefore, $\Phi_2(y)$ is a strictly increasing function, and $\frac{e^{v_2}-M_e}{v_2-M_1}=\Phi_2(v_2)\overset{\eqref{eq:u-v-1et}}=\Phi_2(u)>\Phi_2(0)=\frac{M_e-1}{M_1}$. As a result, we have
$$ \frac{e^{v_2}-1}{v_2}>\frac{e^{v_2}-1-(M_e-1)}{v_2-M_1}>\frac{M_e-1}{M_1} \overset{\eqref{1et:v1}}= \frac{e^{v_1}-1}{v_1}.$$
Since $\frac{e^y-1}{y}$ is strictly increasing when $y >0$, the above equality implies that $v_2 > v_1$.
\end{proof}
\subsection{Proof of Lemma \ref{lm:exp uv-2}}
\begin{proof}
Suppose that $u \in \left(0,\min\left\{M_1,q\right\}\right)$ is any root of equation \eqref{eqn-v-1et}, and construct $v_2=q+1-e^{u}\frac{M_1-u}{M_e-e^{u}}=q + \frac{\Phi_3(u)}{M_e -e^{u}}$. 
Since $u<M_1$ and $\Phi_3(\cdot)$ defined in \eqref{eqn:phi-4-5} is strictly decreasing, we have
$M_e-e^u(M_1-u+1)=\Phi_3(u)>\Phi_3(M_1)>0$, which yields $v_2 > q$. Moreover, recalling that $u \in \left(0,\min\left\{M_1,q\right\}\right)$,  we have $u < q < v_2$ and the following construction
\begin{equation*} 
    \left[  
\begin{array}{c}
p_1^* \\
p_2^* \\
\end{array}
\right]= \left[  
\begin{array}{c}
 \frac{v_2-M_1}{v_2-u} \\
\frac{M_1-u}{v_2-u} \\
\end{array}
\right] 
\,\,\, \text{and } \,\,\,
\left[  
\begin{array}{c}
z^*_0\\
z^*_1 \\
z^*_e\\
\end{array}
\right] =  \left[  
\begin{array}{c}
\frac{(u-1)e^{u}}{e^{v_2}-e^{u}}\\
-\frac{e^{u}}{e^{v_2}-e^{u}} \\
\frac{1}{e^{v_2}-e^{u}} \\
\end{array}
\right]
\end{equation*}
is well defined, {where} $\bp^*=(p^*_1, p^*_2)^T$ and $\bz^*=(z^*_0, z^*_1, z^*_t)^T$ are the solutions of the two linear equations in \eqref{eq:1-et large}. Therefore,
to confirm the primal feasibility, all that remains is to show that $0< p^*_1,p^*_2<1$, or equivalently, $u < M_1 < v$, due to the construction of $\bp^*$. Recall that the function $\Phi_2(\cdot)$ defined in \eqref{eqn:phi-4-5} is strictly increasing on $[0,M_1]$.
Since $u<\min\{M_1,v_2\}$ and $\Phi_2(u)\overset{\eqref{eq:u-v-1et}}=\Phi_2(v_2)$, we {must} have $v>M_1$, for otherwise, $\Phi_2(\cdot)$ cannot have the same value at $u$ and $v_2$.
Next we verify the dual feasibility of $\bz^*$.
{The} second system of equations in \eqref{eq:1-et large} indicates that
\begin{equation}\label{H1e-u-v2-0}
H_{1e}'(u;\bz^*)={H}_{1e}'(v;\bz^*)=0 \;\,\mbox{and}\;\,{H}_{1e}(u;\bz^*)={H}_{1e}(v_2;\bz^*)=0.
\end{equation}
In addition, since we showed at the very beginning of the proof that $u < q < v$, it follows that $z_e^*=\frac{1}{e^v-e^u}>0$ and ${H}_{1e}''(x;\bz^*)=z_e^*e^{x}\ge 0$ when $x\in [0,q) \bigcup (q,\infty)$. Therefore, ${H}(x;\bz^*)$ is convex in $[0,q)$ and $(q,\infty)$, which combined with \eqref{H1e-u-v2-0} gives us
$$
{H}_{1e}(x;\bz^*) \ge \left\{\begin{array}{cl}
     {H}_{1e}'(v;\bz^*)(x-v)+ {H}_{1e}(v;\bz^*)=0,& \mbox{if}\;\; x \in (q,+\infty) \\
     {H}_{1e}'(u;\bz^*)(x-u)+{H}_{1e}(u;\bz^*)=0, & \mbox{if}\;\; x \in [0,q)
\end{array}
\right. .
$$
Since ${H}_{1e}(x;\bz^*)$ is continuous at $q$, we conclude that ${H}_{1e}(x;\bz^*) \ge 0$ for all $x \ge 0$, and thus $\bz^*$ is a dual feasible solution. Observing that 
the complementary slackness condition is already implied by 
${H}_{1e}(u;\bz^*)={H}_{1e}(v;\bz^*)=0$, we conclude that
$(\bp^*, \bz^*)$ is indeed an optimal primal-dual solution pair, as desired. 
\end{proof}
\end{document}